\newtheorem  {theorem}                  {Theorem}
\newtheorem* {theorem*}                   {Theorem}
\newtheorem {lemma}[theorem] {Lemma}
\newtheorem {prop}[theorem]      {Proposition}
\newtheorem* {prop*}     {Proposition}
\newtheorem {Corollary}[theorem]                 {Corollary}
\newtheorem {corollary}[theorem]      {Corollary}
\theoremstyle{definition}
\newtheorem {defi}[theorem] {Definition}
\newtheorem {Remark} [theorem]         {Remark}
\newtheorem* {Example*}    {Example}
\newcommand{\T}{\mathbb{T}}
\newcommand{\curl}{\operatorname{curl}}
\newcommand{\dive}{\operatorname{div}}
\newlength{\intwidth}
\DeclareRobustCommand{\Bint}
   {\mathop{%
      \text{%
        \settowidth{\intwidth}{$\int$}%
        \makebox[0pt][l]{\makebox[\intwidth]{$-$}}%
        $\int$}}}
\newcommand{\bE}{\mathbb{E}}
\renewenvironment{abstract}[1]
  {\medskip\selectlanguage{#1}%
   \textsc{\abstractname.}}
  {\par\bigskip}
\title{Computability and Beltrami fields in Euclidean~space}
\author{Robert Cardona}\address{ Robert Cardona,
Laboratory of Geometry and Dynamical Systems,  Universitat Polit\`{e}cnica de Catalunya,  Avinguda del Doctor Mara\~{n}on 44-50, 08028, Barcelona   \it{e-mail: robert.cardona@upc.edu}}
 \thanks{Robert Cardona acknowledges financial support from the Spanish Ministry of Economy and Competitiveness, through the Mar\'ia de Maeztu Programme for Units of Excellence in R\& D (MDM-2014-0445) via an FPI grant.}
\author{Eva Miranda}\address{ Eva Miranda,
Laboratory of Geometry and Dynamical Systems $\&$ Institut de Matem\`atiques de la UPC-BarcelonaTech (IMTech),  Universitat Polit\`{e}cnica de Catalunya,  Avinguda del Doctor Mara\~{n}on 44-50, 08028, Barcelona  \\  CRM Centre de Recerca Matem\`{a}tica, Campus de Bellaterra
Edifici C, 08193 Bellaterra, Barcelona
 \it{e-mail: eva.miranda@upc.edu }
 }
\thanks{Robert Cardona and Eva Miranda are partially supported by the AEI grant PID2019-103849GB-I00 of MCIN/ AEI /10.13039/501100011033. Eva Miranda is supported by the Catalan Institution for Research and Advanced Studies via an ICREA Academia Prize 2016 and ICREA Academia Prize 2021 and by the Spanish State
Research Agency, through the Severo Ochoa and Mar\'{\i}a de Maeztu Program for Centers and Units
of Excellence in R\&D (project CEX2020-001084-M)}
\author{Daniel Peralta-Salas} \address{Daniel Peralta-Salas, Instituto de Ciencias Matem\'aticas, Consejo Superior de Investigaciones Cient\'ificas,
28049 Madrid, Spain. \it{e-mail: dperalta@icmat.es} }
\thanks{Daniel Peralta-Salas is supported by the grants CEX2019-000904-S, RED2018-102650-T, EUR2019-103821 and PID2019-106715GB GB-C21 funded by MCIN/AEI/ 10.13039/501100011033. The authors were supported by the project Computational, dynamical and geometrical complexity in fluid dynamics -  AYUDAS FUNDACIÓN BBVA A PROYECTOS INVESTIGACIÓN CIENTÍFICA 2021.}
\begin{document}

\maketitle

\begin{abstract}{english}
In this article, we pursue our investigation of the connections between the theory of computation and hydrodynamics. We prove the existence of stationary solutions of the Euler equations in Euclidean space, of Beltrami type, that can simulate a universal Turing machine. In particular, these solutions possess undecidable trajectories. Heretofore, the known Turing complete constructions of steady Euler flows in dimension 3 or higher were not associated to a prescribed metric. Our solutions do not have finite energy, and their construction makes crucial use of the non-compactness of $\mathbb R^3$, however they can be employed to show that an arbitrary tape-bounded Turing machine can be robustly simulated by a Beltrami flow on $\mathbb T^3$ (with the standard flat metric). This shows that there exist steady solutions to the Euler equations on the flat torus exhibiting dynamical phenomena of (robust) computational complexity as high as desired. We also quantify the energetic cost for a Beltrami field on $\mathbb T^3$ to simulate a tape-bounded Turing machine, thus providing additional support for the space-bounded Church-Turing thesis. Another implication of our construction is that a Gaussian random Beltrami field on Euclidean space exhibits arbitrarily high computational complexity with probability~$1$. Finally, our proof also yields Turing complete flows and diffeomorphisms on $\mathbb{S}^2$ with zero topological entropy, thus disclosing a certain degree of independence within different hierarchies of complexity.
\end{abstract}

\section{Introduction}\label{S:intro}

A vector field $u$ in $\mathbb R^3$ that satisfies the equation
\[
\curl u=\lambda u
\]
for some constant $\lambda\neq 0$ is called a \emph{Beltrami field}. These fields appear, and are extremely relevant, in many physical contexts. In fluid mechanics, they are stationary solutions to the \emph{Euler equations} with constant Bernoulli function; in magnetohydrodynamics, they are known as \emph{force-free fields}, and model stellar atmospheres and plasma equilibria.

The computability/hydrodynamics duet has attracted considerable attention in recent years motivated both by the foundational works of Moore in the 1990s, and Tao's programme on the universality of the Euler equations. More generally, it is relevant to understand the computational complexity of a given class of dynamical systems, and the decidability of properties such as the reachability problem of an orbit~\cite{Mo90, Mo1} or the computability of dynamically-defined sets~\cite{BY}. Tao's \emph{universality} programme~\cite{T1,T2,T3} speculates on the connections between Turing complete solutions in hydrodynamics and the blow-up problem for the Euler and the Navier-Stokes equations~\cite{TNat}. If the Riemannian metric is not fixed (and hence it can be considered as an additional ``free variable'' of the problem), geometric techniques from the symplectic and contact worlds turn out to be extremely useful, as exploited in~\cite{CMPP1,CMPP2} to construct Turing complete steady Euler flows on some Riemannian spheres. Such constructions address a question raised by  Moore in 1991 of whether a fluid flow is capable of performing computations~\cite{Mo90,Mo1}, that we tackle in this paper for the Euclidean three-dimensional space.

In the constructions of stationary solutions of the Euler equations in~\cite{CMPP1, CMPP2} the Riemannian metric is deformed, adapted to the (underlying) contact geometry. Some of these deformations are controlled using sophisticated techniques such as the $h$-principle, or realizing Turing complete diffeomorphisms of the disk as Poincar\'{e} return maps of Reeb flows. This can make the metric very intricate in certain regions. Time-dependent solutions of the Euler equations exhibiting different complex behaviors are also constructed in~\cite{T2, T3, TdL, CMP2}, but again they make use of non-canonical metrics. In particular, the solutions of the Euler equations obtained in~\cite{CMPP1,CMPP2,CMP2} {model Turing complete fluid flows for non-Euclidean metrics}. Understanding the Riemannian properties of these metrics can be challenging, and these techniques fail when trying to obtain Turing complete solutions in Euclidean space.

In this work, we fill this gap establishing the existence of a Turing complete Beltrami field in $\mathbb R^3$. Accordingly, we obtain a universal Turing machine simulated by a $3$-dimensional steady fluid flow in Euclidean space, {which is the usual context of mathematical hydrodynamics}. The techniques of proof are completely different from those applied in~\cite{CMPP1,CMPP2}. Specifically, this is our main result:

\begin{theorem}\label{th.main}
There exists a Beltrami field $u$ in $\mathbb R^3$ that can simulate a universal Turing machine (i.e., the field is Turing complete).
\end{theorem}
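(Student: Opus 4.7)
The plan is to combine two ingredients: a robustly Turing complete, compactly supported, divergence-free vector field in a bounded region of $\mathbb{R}^3$, together with an approximation theorem that promotes such a field to a genuine Beltrami field. Concretely, I would use (i) a smooth, area-preserving disk diffeomorphism that simulates a universal Turing machine in a way that is stable under $C^k$-small perturbations, and (ii) the inverse localization principle for Beltrami fields in Euclidean space, which states that any prescribed divergence-free vector field on a compact set can be approximated in the $C^k$-norm, after a suitable rescaling, by a global Beltrami field $u$ in $\mathbb{R}^3$ satisfying $\curl u = u$.

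First, I would produce a smooth, compactly supported, area-preserving diffeomorphism $\varphi$ of $\mathbb{R}^2$ that \emph{robustly} simulates a universal Turing machine, in the sense that every input tape is still processed correctly by any $\psi$ that is $C^k$-close to $\varphi$. This can be achieved by encoding the machine as a generalized shift and implementing the shift with smooth hyperbolic horseshoe-type building blocks, so that the symbolic dynamics enjoys a shadowing property. Taking the standard suspension of $\varphi$ inside a solid torus $D^2\times \mathbb{S}^1 \subset \mathbb{R}^3$ yields a smooth, divergence-free vector field $X$ which is compactly supported, tangent to the boundary of the solid torus, and whose Poincar\'e first-return map on a chosen transverse disk is exactly $\varphi$.

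Next, invoking the inverse localization theorem, for every $\varepsilon>0$ and $k\ge 1$ I would obtain a large parameter $\mu>0$ and a Beltrami field $u$ in $\mathbb{R}^3$ with $\curl u = u$ such that the rescaled field $\mu^{-1}u(\cdot/\mu)$ is $\varepsilon$-close to $X$ in $C^k$-norm on the compact region encoding the computation. Since the rescaled and unrescaled flows are conjugate, the transverse disk of $X$ perturbs to a transverse disk of $u$ whose first-return map is $C^k$-close to $\varphi$, and the robustness of the simulation then guarantees that the flow of $u$ itself simulates the same universal Turing machine. The Beltrami field $u$ so obtained is globally defined on $\mathbb{R}^3$ with eigenvalue $\lambda=1$, but it need not be bounded nor have finite energy.

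The principal obstacle will be establishing the robustness of $\varphi$: classical Turing complete dynamical systems built from affine generalized shifts rely on exact computations that are destroyed by arbitrary smooth perturbations, so genuine care is needed to trap the symbolic orbits inside thin invariant tubular neighborhoods which persist under all sufficiently small $C^k$-perturbations --- this is precisely why the symbolic evolution must be realized by hyperbolic, rather than purely combinatorial, building blocks. A secondary technical issue is promoting a $C^k$-approximation of vector fields to a $C^k$-approximation of their Poincar\'e return maps on a prescribed transversal, which requires quantitative transversality estimates for the transit time. Finally, the non-compactness of $\mathbb{R}^3$ enters decisively in the inverse localization step, since the amplitude of the approximating Beltrami field is only controlled in the target region and may grow arbitrarily at infinity, which is exactly why the method produces infinite-energy solutions and does not transplant verbatim to a compact Riemannian $3$-manifold.
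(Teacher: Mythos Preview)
Your step (i) cannot be carried out: a disk diffeomorphism that simulates a universal Turing machine \emph{robustly}, in the sense that every input is still processed correctly by any $C^k$-nearby map, does not exist. This is a well-known obstruction on compact phase spaces (Bournez--Gra\c{c}a--Hainry, cited in the paper): if the simulation survived all sufficiently small uniform perturbations, one could decide the halting problem by numerically integrating the orbit with controlled round-off error. Hyperbolic horseshoe blocks do give structural stability of the \emph{shift} dynamics, but a universal Turing machine is a generalized shift whose faithful encoding requires tracking specific orbits for unbounded times with precision that must shrink to zero as the computation lengthens; no uniform $\varepsilon$ can protect all of these. The paper notes this explicitly, and its Theorem~\ref{thm:torus} (tape-bounded machines on $\mathbb T^3$) is essentially the best one can extract from your scheme: a fixed $\varepsilon$ buys only finitely many steps, hence only space-bounded computation.

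The paper's route is genuinely different and is designed to evade exactly this obstacle. It works on the \emph{non-compact} plane and asks only for \emph{weak} robustness: the computational power of a planar gradient field $\nabla_{\mathbb R^2}f$ is preserved under perturbations bounded pointwise by an error function $\epsilon(x,y)$ that decays at infinity, so that longer computations, placed farther out, are protected by correspondingly tighter tolerances. This weak robustness is enough to replace $f$ by an \emph{entire} function $F$ via a better-than-uniform Whitney approximation, and then a global Cauchy--Kovalevskaya theorem for the curl operator (proved in the paper) extends $\nabla_{\mathbb R^2}F$ from $\{z=0\}$ to a Beltrami field on all of $\mathbb R^3$. A secondary issue in your outline is that the inverse localization/global approximation theorems for Beltrami fields take as input a field that already satisfies $\curl u=\lambda u$, not an arbitrary divergence-free suspension; but this point is moot given the failure of step~(i).
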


A comparison between Theorem~\ref{th.main} and the main result in~\cite{CMPP2} is in order. As mentioned earlier, the field $\widetilde u$ constructed in~\cite{CMPP2} is a Beltrami field on $\mathbb R^3$ for some Riemannian metric, which is not the canonical one. In contrast, the vector field $u$ in Theorem~\ref{th.main} is Beltrami with respect to the Euclidean metric. However, while the universal Turing machine is simulated by the flow of $\widetilde u$ restricted to a compact invariant set of $\mathbb R^3$, all the Turing machine computations of $u$ occur on an invariant set that is non-compact (and contained in the plane $\{z=0\}$). Accordingly, the non-compactness of $\mathbb R^3$ is crucially used in the proof of Theorem~\ref{th.main}. In particular, $\widetilde u$ has finite energy ($L^2$-norm), which is not the case of $u$.


The idea to prove Theorem~\ref{th.main} consists in constructing an analytic vector field $X$ tangent to the plane $\{z=0\}\subset\mathbb R^3$ that is Turing complete and then extending it to a Beltrami field $u$ on $\mathbb R^3$ so that $u|_{\{z=0\}}=X$. The typical extension tool in the context of analytic PDEs is the \emph{Cauchy-Kovalevskaya} theorem. Two difficulties arise in the implementation of this strategy. First, the only available Cauchy-Kovalevskaya theorem for Beltrami fields gives solutions defined just on a neighborhood of the Cauchy surface, cf.~\cite[Theorem 3.1]{EP12}; second, since the curl operator is not elliptic, a necessary condition to apply Cauchy-Kovalevskaya with a Cauchy datum $X$ on $\{z=0\}$ is that it must be a gradient of an analytic function, so we need to construct a Turing complete system of the form $\nabla_{\mathbb R^2} F$.

We overcome these problems by proving a new Cauchy-Kovalevskaya theorem for the curl operator that gives global Beltrami fields if the planar function $F$ is \emph{entire}, in the sense that it admits a holomorphic extension to $\mathbb C^2$. To ensure that $\nabla_{\mathbb R^2} F$ is Turing complete, we construct a smooth function $\widehat F$ whose gradient simulates a universal Turing machine in a robust enough sense so that an entire approximation $F$ of $\widehat F$ is still Turing complete. The construction of $\widehat F$ is the most technically demanding part of this work and requires a new method to encode Turing machine dynamics into a planar flow. Our encoding and idea of fixing certain orbits of the system is inspired by the Turing complete countably piecewise linear function of the interval constructed in~\cite{KCG} by Koiran, Cosnard and Garzon. The flow that we construct is a dynamical system which, like theirs, is computable but it does not have an ``explicit" finite description, in contrast with other examples of Turing complete systems~\cite{Mo1, GCB, GCB2, GZ}.

Our construction, combined with the inverse localization theorem established in~\cite{EPT}, allows us to prove that Beltrami fields on the flat torus $\mathbb T^3$ are capable of robustly simulating space-bounded computations {(that is, Turing machines with a finite number of configurations)}. These simulations can exhibit computational complexity as high as desired if the $H^1$-norm of the solution is large enough. {By computational complexity here we mean the amount of resources (i.e., the number of steps and the tape size) needed to perform a computation, either in time or space.} This energetic cost prevents us from proving the existence of Turing complete steady Euler flows on $\mathbb T^3$. This limitation of the {robust} computational power in terms of the energy (or memory) of the physical system is fully consistent with the space-bounded Church-Turing thesis~\cite{BSR}.

\begin{theorem}\label{thm:torus}
Any tape-bounded Turing machine can be robustly simulated by a Beltrami field $v$ on the flat torus $\T^3$. From a quantitative viewpoint, if $s_b$ is the tape size of the tape-bounded Turing machine, then the $H^1$-norm of $v$ is at least
\begin{equation}\label{Eq.LB}
\|v\|_{H^1(\mathbb T^3)}\geq C\exp(\exp(\exp(Cs_b)))
\end{equation}
for some positive constant $C$.
\end{theorem}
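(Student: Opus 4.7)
The plan is to combine the Turing complete Beltrami field $u$ on $\mathbb R^3$ of Theorem~\ref{th.main} with the inverse localization theorem of~\cite{EPT}, which allows one to approximate, inside a ball and up to a rescaling, any Beltrami field of $\mathbb R^3$ by a high-frequency Beltrami eigenfield on $\mathbb T^3$ with prescribed $C^0$-accuracy.

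First I would observe that a tape-bounded Turing machine with tape size $s_b$ visits only finitely many configurations, so its simulation by $u$ is confined to a compact set $K\subset\mathbb R^3$, with the halting decision detected on a Poincar\'{e} section inside $K\cap\{z=0\}$. Since the construction of $u$ (via the planar function $\widehat F$) encodes configurations as open sets of that section, any vector field sufficiently close to $u$ in $C^0(K)$ simulates the same tape-bounded machine, so the simulation is robust. After rescaling $u$ so that its Beltrami eigenvalue equals $1$, the inverse localization theorem then produces, for every sufficiently large $\lambda$, a Beltrami field $v$ on $\mathbb T^3$ satisfying $\curl v=\lambda v$ and a point $p\in\mathbb T^3$ such that the rescaled field $x\mapsto v(p+\lambda^{-1}x)$ approximates $u$ on $K$ in $C^0$ within an error $\varepsilon(\lambda)\to 0$. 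Choosing $\varepsilon$ below the robustness threshold of $u$ yields a Beltrami field $v$ on $\mathbb T^3$ that robustly simulates the prescribed tape-bounded machine, proving the qualitative half of the theorem.

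For the quantitative bound~\eqref{Eq.LB}, I would track how the scales of the Euclidean simulation depend on $s_b$. A tape-bounded machine with tape size $s_b$ may run for at most $|Q|\cdot|\Sigma|^{s_b}\cdot s_b$ steps, which is of order $\exp(Cs_b)$. The Koiran-Cosnard-Garzon-inspired encoding underlying the construction of $\widehat F$ assigns to each configuration a point whose separation from its neighbours shrinks at least like $\exp(-n)$ after $n$ steps, so the finest features of $u$ on $K$ that must be resolved in order to distinguish the halting from the non-halting orbits are of scale at most $\exp(-\exp(Cs_b))$. This forces the inverse localization to use $\lambda$ at least of order $\exp(\exp(Cs_b))$, which through $\curl v=\lambda v$ gives $\|v\|_{H^1(\T^3)}\gtrsim\lambda\,\|v\|_{L^2(\T^3)}$. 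A further lower bound of comparable size on $\|v\|_{L^2}$, arising either from the amplitude required of the rescaled field to preserve the dynamics or from the cost of approximating the smooth $\widehat F$ by an entire $F$ via the new Cauchy-Kovalevskaya theorem, contributes the final exponential factor and yields the triple-exponential bound~\eqref{Eq.LB}.

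The main obstacle will be this quantitative accounting. The qualitative robust simulation statement is a fairly direct consequence of the robustness of Theorem~\ref{th.main} combined with a standard use of~\cite{EPT}; by contrast, propagating three independent sources of complexity—the length of the simulated computation, the resolution of the KCG-type encoding, and the cost of making the Cauchy datum entire while preserving its dynamics—and proving that none of them can be absorbed into the others, is what delivers the tower $\exp(\exp(\exp(Cs_b)))$ and provides the quantitative support for the space-bounded Church-Turing thesis alluded to in the statement.
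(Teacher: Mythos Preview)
Your qualitative argument is essentially the paper's: restrict the Turing complete Euclidean Beltrami field $u$ to a large ball containing all orbits relevant to the tape-bounded machine, use the finite-configuration robustness of the simulation on that ball, and then transplant into $\mathbb T^3$ via the inverse localization theorem of~\cite{EPT}. That part is fine.

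The quantitative accounting, however, misattributes the three exponentials. Your step~2 claims the encoded configurations are separated by $\exp(-n)$ after $n$ steps; in fact, the interval sizes in the paper's encoding satisfy $|I_{\Delta^l(c_i)}|\gtrsim (3\cdot 5)^{-10^{i+l+1}}$ (Lemma~\ref{lem:intsize}), i.e.\ they decay \emph{doubly} exponentially in $i+l$. Equivalently, the error function $\epsilon(x,y)$ controlling weak robustness decays like $e^{-e^{Cr}}$ (Remark~\ref{R:error2}). Since the ball radius needed is $N\sim e^{Cs_b}$ (your step~1, which is correct), the robustness threshold on $B_N$ is already $\epsilon\lesssim \exp(-\exp(\exp(Cs_b)))$, and the inverse localization bound $L\geq C/\delta$ with $\delta<\epsilon/2$ then gives $L\gtrsim \exp(\exp(\exp(Cs_b)))$. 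With $\|v\|_{L^2(\mathbb T^3)}$ normalized to~$1$, one has $\|v\|_{H^1}\sim L$, and the triple exponential is obtained.

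So your proposed ``third exponential'' coming from a lower bound on $\|v\|_{L^2}$ or from the cost of the entire approximation of $\widehat F$ is not where the tower comes from, and neither mechanism would actually deliver it: the $L^2$-norm is simply normalized, and the entire approximation enters only into the construction of $u$, not into the inverse localization estimate. The correct sources are (i) computation length $\sim e^{Cs_b}$, (ii) the \emph{double}-exponential shrinking of the KCG-type intervals, and (iii) $L\gtrsim\epsilon^{-1}$ from inverse localization.
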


\begin{Remark}
The lower bound~\eqref{Eq.LB} of the $H^1$-norm of $v$ in terms of the tape size of the Turing machine that it simulates comes from the specific way we encode the Turing machine into the dynamics of $v$ (i.e., the construction of the Turing complete planar system $\nabla_{\mathbb R^2} F$ and the inverse localization theorem). In particular, we do not claim that all implementations of tape-bounded Turing machines using steady Euler flows on $\mathbb T^3$ necessarily obey such a bound.
\end{Remark}

In this theorem, the notion of ``simulation" is a bit different from the one in Theorem~\ref{th.main}, as explained in Section~\ref{S:bounded}. In particular, the robustness of the simulations holds with respect to uniform perturbations of the flow and of the initial conditions. Theorem~\ref{thm:torus} implies that there are Beltrami field on $\mathbb T^3$ whose orbits exhibit computational complexity as high as desired. This computational complexity arises in the reachability problem of determining whether the orbit of the field through an explicit point will intersect an explicit open set before leaving a compact subset of $\T^3$. In other words, although we cannot prove that determining if a trajectory intersects an explicit open set is undecidable, it might be simply non-computable from a practical point of view because of the arbitrarily high computational cost of solving the problem\footnote{{More precisely, since the computational complexity of a problem that can be solved by a tape-bounded Turing machine tends to infinity as the tape size of the machine grows, we deduce that the reachability problem for a Beltrami field on $\mathbb T^3$ can be arbitrarily hard to solve in a robust manner.}} and the robustness of this property. It also follows from our construction that this phenomenon of robust computational complexity holds for fluid particle paths in a special class of time-dependent solutions of the Navier-Stokes equations on $\mathbb T^3$ (see Remark~\ref{rem:NS} in Section~\ref{S:bounded}).

The previous results together with the theory of Gaussian random Beltrami fields in Euclidean space~\cite{EPR} also allow us to prove that, in some sense, Turing completeness occurs with probability~$1$ in random Beltrami fields. So, in particular, the aforementioned kind of complexity is somehow \emph{generic}.

\begin{theorem}\label{T:gauss}
With probability~$1$, a Gaussian random Beltrami field in Euclidean space exhibits trajectories that simulate {every possible} tape-bounded Turing machine. In particular, these fields have trajectories of arbitrarily high computational complexity.
\end{theorem}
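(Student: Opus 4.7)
The plan is to deduce Theorem~\ref{T:gauss} from Theorem~\ref{thm:torus} combined with the inverse-localization and decorrelation properties of Gaussian random Beltrami fields proved in~\cite{EPR}. Fix a tape-bounded Turing machine $T$. By Theorem~\ref{thm:torus} there is a Beltrami field $v$ on $\mathbb T^3$ with some nonzero proportionality constant $\lambda$ that \emph{robustly} simulates $T$; lifting by periodicity gives a Beltrami field $\tilde v$ on $\mathbb R^3$ with the same $\lambda$, whose simulation takes place inside a fundamental cube $K\subset\mathbb R^3$, and the robustness statement provides an $\varepsilon>0$ such that any vector field $w$ with $\|w-\tilde v\|_{C^0(K)}<\varepsilon$ still simulates $T$ along an open set of initial conditions contained in $K$.

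From~\cite{EPR} I would extract two ingredients about the Gaussian random Beltrami field $u$ on $\mathbb R^3$ (with proportionality constant $\lambda$, after trivial rescaling). First, a support statement: every deterministic Beltrami field of parameter $\lambda$ lies in the $C^0$-support of $u$ on every compact set, so
\begin{equation*}
p:=\mathbb P\bigl(\|u-\tilde v\|_{C^0(K)}<\varepsilon\bigr)>0.
\end{equation*}
Second, quantitative decorrelation: the covariance of $u$ between two points decays fast enough in their distance to make the field on well-separated compact regions asymptotically independent.

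With these ingredients in hand, I would run a divergent Borel--Cantelli argument. Pick a sequence $\{x_n\}\subset\mathbb R^3$ with pairwise distances growing sufficiently fast and let $A_n:=\{\|u(\cdot+x_n)-\tilde v\|_{C^0(K)}<\varepsilon\}$. By stationarity each $A_n$ has probability $p$, and the decorrelation estimate controls the pairwise correlations $\mathbb P(A_n\cap A_m)-p^2$ so that a standard second-moment argument yields $\mathbb P(A_n\text{ i.o.})=1$. On this event, robustness transports the computation of $T$ to trajectories of $u$ located inside translated copies of $K$ around the centers $x_n$. Since the set of tape-bounded Turing machines is countable, intersecting these probability-$1$ events over all $T$ produces a single almost-sure event on which $u$ simulates every tape-bounded Turing machine; in particular, $u$ then exhibits trajectories of arbitrarily high computational complexity.

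The main technical obstacle is the decorrelation step: one has to verify that the concrete correlation decay available from~\cite{EPR} is quantitatively strong enough, given the chosen geometry of the centers $\{x_n\}$, to make the second-moment Borel--Cantelli estimate close. Modulo this careful accounting, the argument is essentially a packaging of Theorem~\ref{thm:torus} together with classical support and mixing facts for Gaussian processes.
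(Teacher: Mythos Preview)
Your outline is correct, but it takes a more laborious route than the paper in two respects.

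First, you pass through Theorem~\ref{thm:torus} (the torus field) and then lift and rescale back to $\mathbb R^3$. The paper instead works directly with the Euclidean Beltrami field $u$ of Theorem~\ref{th.main}: restricting $u$ to a sufficiently large ball $B_{N_j}$ already yields a robust simulation of the tape-bounded machine $T_{b_j}$ (this is exactly the content of the proof of Theorem~\ref{thm:torus} \emph{before} one applies inverse localization), so the detour through $\mathbb T^3$ is unnecessary for the random-field statement.

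Second, and more importantly, in place of your quantitative second-moment Borel--Cantelli argument, the paper uses the qualitative fact from~\cite{EPR} that the Gaussian measure $\mu_B$ is translation-invariant and \emph{ergodic}. One introduces the measurable, translation-covariant indicator $\Phi_j(w)=1$ iff $w$ simulates $T_{b_j}$ on some compact set; the support property gives $\bE\Phi_j>0$, and the ergodic theorem then forces $\Phi_j=1$ almost surely. Countable intersection over $j$ finishes. This sidesteps precisely the ``main technical obstacle'' you flag: no rate of decorrelation needs to be checked, because ergodicity (equivalently, continuity of the spectral measure on the sphere) delivers the $0$--$1$ conclusion for free.

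Your route would go through if the covariance decay of the random Beltrami field is strong enough to control $\mathbb P(A_n\cap A_m)-p^2$ for the rather nonlocal events $A_n$; this is plausible but not immediate, since the events depend on the field over a whole compact set rather than at single points. The ergodicity argument is both shorter and avoids that verification entirely.
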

\begin{Remark}
Observe that in Theorem~\ref{T:gauss} every tape-bounded Turing machine is simulated by a single randomly chosen Beltrami field on $\mathbb R^3$ almost surely, whereas in Theorem~\ref{thm:torus}, for each tape-bounded machine there is a Beltrami field in $\mathbb T^3$ that simulates it.
\end{Remark}

We finish this introduction with a digression on the connections between computational and dynamical complexity. Turing completeness provides a measure of the complexity of a dynamical system. This puts computational complexity, undecidability, and dynamical complexity on the same footing. It is intriguing to understand how these different types of complexity are related. A usual measure of dynamical complexity is given by the topological entropy which, grosso modo, captures the exponential growth rate of the system. As a surprising spin-off of our constructions, not related to hydrodynamics, we show that there exist Turing complete smooth vector fields and diffeomorphisms on $\mathbb S^2$ with zero topological entropy. Albeit these dynamical systems are  not ``disordered'' enough to be considered dynamically complex, they are capable of universal computation. We find this result somehow surprising, and it connects in some sense with the study of the computational properties of the topological entropy of dynamical systems~\cite{Koiran}, or the entropy of dynamical models of Turing machines~\cite{Delvenne, GOT}.

\begin{theorem}\label{T:entropy}
There exist Turing complete smooth vector fields and diffeomorphisms on the sphere $\mathbb S^2$ with zero topological entropy.
\end{theorem}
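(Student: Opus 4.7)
The plan is to reuse the smooth planar vector field $X := \nabla_{\mathbb R^2}\widehat F$ constructed in the course of proving Theorem~\ref{th.main}, which simulates a universal Turing machine in $\mathbb R^2$, and to compactify it to $\mathbb S^2$ while preserving Turing completeness. The key observation is that $X$ is a gradient, so $\widehat F$ is a strict Lyapunov function along non-stationary orbits: the non-wandering set of $X$ is contained in the critical set of $\widehat F$, no orbit is recurrent other than at fixed points, and the topological entropy of $X$ on every compact invariant set vanishes.

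Next, I would one-point compactify $\mathbb R^2$ to $\mathbb S^2$ and replace $X$ by $\widetilde X := \rho X$, where $\rho > 0$ is smooth and decays sufficiently fast at infinity so that $\widetilde X$ extends smoothly across the added point $\infty$, which becomes a fixed point. Multiplication by a positive scalar only reparametrizes time along each orbit, so $\widetilde X$ and $X$ have the same unparameterized orbits in $\mathbb R^2 \subset \mathbb S^2$, and $\widetilde X$ inherits the Turing complete behavior. The non-wandering set of $\widetilde X$ consists of the critical points of $\widehat F$ together with $\infty$ (to which divergent orbits of $X$ accumulate), all of which are fixed points, whence $h_{\mathrm{top}}(\widetilde X)=0$. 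For the diffeomorphism assertion, take $\phi$ to be the time-$1$ map of the flow of $\widetilde X$; it is a smooth diffeomorphism of $\mathbb S^2$ with $h_{\mathrm{top}}(\phi) = h_{\mathrm{top}}(\widetilde X) = 0$ by the standard identity for time-$t$ maps of smooth flows on compact manifolds.

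The main obstacle is verifying that Turing simulation, as used in the paper, is robust under these two operations: the smooth time reparametrization from $X$ to $\widetilde X$, and the passage to a discrete time-$1$ map. Both are matters of tuning $\rho$ appropriately: by making $\rho$ small in each ``computing region'' of $\widehat F$, one can ensure that any two consecutive Turing configurations along a given orbit of $\widetilde X$ are separated by at least one unit of time, so that the discrete orbit $\{\phi^n(p_0)\}_{n\ge 0}$ of a suitable initial point records the same sequence of configurations as the continuous orbit of $X$. A secondary technical point is to check that $\widehat F$ can be chosen so that its gradient admits a smooth extension at $\infty$ after multiplication by some Schwartz-type $\rho$, which is automatic since the computing dynamics is confined to a region of $\mathbb R^2$ on which no calibration is needed. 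Once this is in place, the construction delivers Turing complete smooth vector fields and diffeomorphisms on $\mathbb S^2$ with zero topological entropy, as required.
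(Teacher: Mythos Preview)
Your proposal is correct and follows essentially the same strategy as the paper: multiply the planar Turing complete field by a positive function decaying fast enough, push forward by inverse stereographic projection to get a smooth field on $\mathbb S^2$ with a zero at the north pole, and then pass to a time-$t$ map for the diffeomorphism statement. Two points of comparison are worth noting. First, for zero topological entropy of the flow, the paper simply invokes Young's theorem that every continuous flow on a compact surface has zero topological entropy~\cite{Y}; this is more economical than your Lyapunov/non-wandering-set argument and does not depend on the gradient structure at all. Second, for the discrete-time statement, rather than tuning $\rho$ so that the time-$1$ map works, the paper keeps $\rho$ arbitrary and instead takes the time-$\delta$ map for $\delta$ sufficiently small, proving a short lemma that the flow spends at least a fixed positive time in each detection strip $\{|y-k|<\varepsilon/2\}$; your idea of slowing the flow down via $\rho$ is an equivalent device. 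In both approaches, zero entropy of the diffeomorphism then follows from Abramov's formula.
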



\textbf{Organization of this article:} In Section~\ref{S:Turing} we recall the basics in the theory of Turing machines and their connections with dynamical systems. In particular, we recall the notion of Turing complete vector fields and the encoding of each configuration of a Turing machine as an open interval in $[0,1]$ introduced in~\cite{KCG}. In Section~\ref{S:CK} we prove a global version of the Cauchy-Kovalevskaya theorem for the curl operator established in~\cite{EP12}. In Section~\ref{S:TCgrad} we construct a planar Turing complete system of gradient form $\nabla_{\mathbb R^2} F$ whose computational power is weakly robust under perturbations. Theorem~\ref{th.main} is proved in Section~\ref{S:main}, while Theorem~\ref{thm:torus} on Beltrami fields in $\mathbb T^3$ and Theorem~\ref{T:gauss} on Gaussian random Beltrami fields are proved in Section~\ref{S:bounded}. Finally, in Section~\ref{S:final}, we prove Theorem~\ref{T:entropy}.
\\

\textbf{Acknowledgements:} We are indebted to Daniel Gra\c{c}a for enriching conversations that improved this article and to Crist\'obal Rojas for useful comments concerning the space-bounded Church-Turing thesis. We are also grateful to Michael Freedman for inquiring about the initial-condition robustness and the set-to-set formulation of Turing machine simulations, and to Leonid Polterovich for his question about topological entropy of our former construction~\cite{CMPP2}.

\section{Turing machines and dynamical systems}\label{S:Turing}

In this section, we recall the definition of Turing machine and how it can be simulated by a dynamical system. We also introduce an encoding that allows us to represent any state of the machine by an open interval in $[0,1]$. This interval encoding will be crucial later to construct a vector field in $\mathbb R^2$ that simulates a universal Turing machine.

\subsection{Basic definitions}\label{SS:basic}
A Turing machine $T$ is defined by the following data:
\begin{itemize}
\item A finite set $Q$ of ``states'' including an initial state $q_0$ and a halting state $q_{halt}$.
\item A finite set $\Sigma$ which is the ``alphabet'' with cardinality at least two.
\item A transition function $\delta:Q\times \Sigma \longrightarrow Q\times \Sigma \times \{-1,0,1\}$.
\end{itemize}
{We will work with Turing machines whose tape is always a string $t\in \Sigma^{\mathbb{Z}}$ with only a finite amount of symbols different from the blank symbol (that will be represented by zero). {This model of Turing machines is commonly used \cite{GCB, GCB2} and equivalent in computational power to other models.} It means that any tape is of the form
\begin{equation}\label{eq:string}
 ...00t_{-a}...t_{b}00...
\end{equation}
with $t_i\in\Sigma$ and $t_{-a}$ and $t_b$ are (respectively) the first and last digits different from zero of the tape. Of course, both $a\geq0$ and $b\geq0$ depend on the tape, so they are not uniformly bounded. In particular, at any given step, there are only (at most) $a+b+1$ non-blank symbols. The space of configurations of the machine $T$ is countable, and of the form $Q\times A \subset Q\times \Sigma^{\mathbb{Z}}$, where $A$ is the subset of strings of the form~\eqref{eq:string}. }

 For a given Turing machine $T=(Q,q_0,q_{halt},\Sigma,\delta)$ and an input tape $s=(s_n)_{n\in \mathbb{Z}}\in \Sigma^{\mathbb{Z}}$ the machine runs applying the following algorithm. {We will denote by $q$ and $t$ the current state and current tape respectively at a given step of the algorithm.}

\begin{enumerate}
\item Set the current state $q$ as the initial state and the current tape $t$ as the input tape.
\item If the current state is $q_{halt}$ then halt the algorithm and return $t$ as output. Otherwise compute $\delta(q,t_0)=(q',t_0',\varepsilon)$, with $\varepsilon \in \{-1,0,1\}$.
\item Replace $q$ with $q'$, and change the symbol $t_0$ by $t_0'$, obtaining the tape $\tilde t=...t_{-1}.t_0't_1...$ (as usual, we write a point to denote that the symbol at the right of that point is the symbol at position zero).
\item Shift $\tilde t$ by $\varepsilon$ obtaining a new tape $t'$, then return to step $(2)$ with current configuration $(q',t')$. Our convention is that $\varepsilon=1$ (resp. $\varepsilon=-1$) corresponds to the left shift (resp. the right shift).
\end{enumerate}

{A step of the algorithm can also be represented by a global transition function
$$ \Delta: Q\times A \longrightarrow Q\times A\,,$$
where we set $\Delta(q_{halt},t):=(q_{halt},t)$ for any tape $t$.	}
It is key to this work that a Turing machine can be simulated by a dynamical system (a vector field or a diffeomorphism). As usual, this is defined in terms of the halting problem for Turing machines. Specifically:

\begin{defi}\label{TC}
Let $X$ be a vector field on a manifold $M$. We say it is Turing complete if for any integer $k\geq 0$, given a Turing machine $T$, an input tape $t$, and a finite string $(t_{-k}^*,...,t_k^*)$ of symbols of the alphabet, there exist an explicitly constructible point $p\in M$ and an open set $U\subset M$ such that the orbit\footnote{Here by orbit we always refer to the ``trajectory", that is the orbit for positive times.} of $X$ through $p$ intersects $U$ if and only if $T$ halts with an output tape whose positions $-k,...,k$ correspond to the symbols $t_{-k}^*,...,t_k^*$.
\end{defi}

{Here ``explicitly constructible" means that the point and the open set are computable: they can be determined with arbitrary precision by a Turing machine (see~\cite{Wei} for an introduction to computable analysis).} Although it is not stated in the definition, most, if not all, constructions of Turing complete systems are done by encoding the ``step-by-step" evolution of a universal Turing machine in the phase space of a system. This is sometimes used as the notion of simulation of Turing machines \cite{GCB, GCB2}. In any case, the computational process or the halting problem becomes a ``point-to-set" property, where the initial point is always an explicitly constructible point. A ``set-to-set" property can also be considered. We will come back to this in Remark \ref{rem:settoset}. In the construction of Turing complete Beltrami fields of this article, the point $p$ depends only on the Turing machine $T$ and the input $t$, and the open set $U\equiv U_{t^*}$ depends on the given finite string $t^*:=(t_{-k}^*,...,t_k^*)$. This dependence of $U$ with $t^*$ is also in Tao's construction~\cite{T1} of a Turing complete diffeomorphism of $\mathbb T^4$, but not in our construction in~\cite{CMPP2}, where $U$ is fixed (it is related to the halting state of the Turing machine) and the point $p$ depends on all the information, i.e., the Turing machine $T$, the input tape $t$, and the finite string $t^*=(t_{-k}^*,...,t_k^*)$.

We want to emphasize that Definition~\ref{TC} and the well known undecidability of the halting problem for Turing machines imply that a Turing complete vector field $X$ exhibits undecidable long-term behavior. More precisely, it is undecidable to determine if the trajectory of $X$ through an explicit point will intersect an explicit open set of the space.

\subsection{An interval encoding}\label{SS.encoding}

Let us restrict to a special class of Turing machines, which is known to have the same computational power as a general Turing machine. Indeed, without any loss of generality, we may assume that the alphabet is $\Sigma=\{0,1,...,9\}$, where $0$ represents a special character referred to as the ``blank symbol", and $Q=\{1,...,m\}$, with $m$ the cardinality of the space of states $Q$.

Following~\cite{KCG}, let us now introduce an encoding of each configuration of $T$ as an open interval in $[0,1]$. To this end we define $s$ as the nonnegative integer whose digits are $t_{-a}...t_{-1}$ and $r$ as the nonnegative integer whose digits are $t_{b}...t_0$. Then each configuration $(q,t)$ can be identified with a rational point in $[0,1]$ by the map
\begin{align}\label{eq:IntEnc}
\begin{split}
\varphi: Q\times A &\longrightarrow [0,1]\\
 (q,t) &\longmapsto \frac{1}{2^q3^r5^s}\,.
 \end{split}
\end{align}

For any such a point $\alpha:=\varphi(q,t)=\frac{1}{2^q3^r5^s}$, {it is not hard to check that the intervals
$$I_{(q,t)}:=\Big(\alpha-\frac{\alpha^2}{8},\alpha+\frac{\alpha^2}{8}\Big)\,,$$
are pairwise disjoint. Indeed, it suffices to notice that
\[
\frac{1}{n}-\frac{1}{n+1}\geq \frac{1}{2n^2}
\]
for any positive integer $n$.} They obviously have size
{\begin{equation}\label{eq:intsize}
|I_{(q,t)}|=\alpha^2/4=\frac{1}{2^{2q+2}3^{2r}5^{2s}}\,.
\end{equation}}

\begin{Remark}\label{rem:haltdist}
A simple variation of this encoding allows us to represent the halting configurations $(q_{halt},t)$ by a set of points in $[0,1]$ that has a positive distance from the set of points associated with non-halting configurations $(q,t)$, $q\neq q_{halt}$. For example, this is the case if we set $$\varphi(q_{halt},t)=1-\frac{1}{2^{q_{halt}}3^r5^s}\,.$$
\end{Remark}

\section{A global Cauchy-Kovalevskaya theorem for Beltrami fields}\label{S:CK}

It is well known that the curl operator does not admit any non-characteristic surfaces, however in~\cite{EP12} it was shown that there is a version of the Cauchy-Kovalevskaya theorem for the Beltrami equation $\curl u=\lambda u$. Specifically, if $\Sigma\subset\mathbb R^3$ is an oriented analytic surface and $v$ is an analytic vector field tangent to $\Sigma$, there exists a unique Beltrami field $u$ in some neighborhood $N(\Sigma)$ of $\Sigma$ with Cauchy datum $u|_\Sigma=v$ if and only if
\begin{equation}\label{eq:ck}
d(j_\Sigma^* v^\flat)=0\,,
\end{equation}
where $j_\Sigma:\Sigma\to\mathbb R^3$ is the inclusion of $\Sigma$ into $\mathbb R^3$ and $v^\flat$ is the $1$-form dual to the vector field $v$ (using the Euclidean metric).

Our goal in this section is to prove a global version of the aforementioned Cauchy-Kovalevskaya theorem for the curl operator. More precisely, we show that when the Cauchy surface $\Sigma$ is an affine plane and the tangent Cauchy datum $v$ is entire, the Cauchy problem has a unique solution $u$ that is defined on the whole $\mathbb R^3$ (and is entire as well). We recall that an analytic function on $\mathbb{R}^n$ is entire if it can be extended to a holomorphic function on $\mathbb{C}^n$.

Parametrizing $\mathbb R^3$ with Cartesian coordinates $(x,y,z)$, let us assume that
$$\Sigma=\{z=0\}\,.$$
In these coordinates, the analytic Cauchy datum $v$ tangent to $\Sigma$ can be written as
\[
v(x,y)=v_1(x,y)\partial_x+v_2(x,y)\partial_y\,.
\]
It is then immediate to check that Equation~\eqref{eq:ck} reads as
\[
\frac{\partial v_1}{\partial y}-\frac{\partial v_2}{\partial x}=0
\]
for all $(x,y)\in\mathbb R^2$, so we conclude that $v$ is a gradient field, i.e., there exists an analytic function $F:\mathbb R^2\to\mathbb R$ such that
$$v(x,y)=\nabla_{\mathbb R^2} F:=\frac{\partial F}{\partial x}\partial_x+\frac{\partial F}{\partial y}\partial_y\,.$$
In particular, this implies that any Beltrami field that is tangent to the plane $\{z=0\}$ is the gradient of an analytic function on the plane.

We can now state the main theorem of this section in terms of gradient fields as Cauchy data.

\begin{theorem}\label{thm:main1}
Let $F(x,y)$ be an entire function on the plane $\{z=0\}$. Then for any $\lambda\neq 0$ there exists a (unique) Beltrami field $u\in \mathbb{R}^3$ that solves the Cauchy problem
\begin{equation*}
\curl u=\lambda u\,, \qquad u|_{\{z=0\}}=\nabla_{\mathbb R^2} F\,.
\end{equation*}
\end{theorem}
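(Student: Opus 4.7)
The plan is to reduce the Beltrami Cauchy problem to three decoupled scalar Helmholtz Cauchy problems on $\mathbb R^3$ and solve each one globally by a power-series expansion in the normal variable $z$. The starting observation is that a Beltrami field is automatically divergence-free and hence satisfies the vector Helmholtz equation $\Delta u=-\lambda^2 u$ (from $\curl\curl u=\nabla\dive u-\Delta u$). Before solving, I would read off the Cauchy data that any Beltrami solution with the prescribed tangential datum must carry on $\{z=0\}$: tangency forces $u_3|_{z=0}=0$; evaluating the two tangential components of the Beltrami equation at $z=0$ forces $\partial_z u_1|_{z=0}=\lambda\,\partial_y F$ and $\partial_z u_2|_{z=0}=-\lambda\,\partial_x F$; and $\dive u=0$ evaluated at $z=0$ pins down $\partial_z u_3|_{z=0}=-\Delta_{\mathbb R^2}F$. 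All six of these functions are entire because $F$ is.

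Next I would solve, for each $i=1,2,3$, the scalar Cauchy problem $(\Delta+\lambda^2)u_i=0$ with the Cauchy data above. Writing $u_i(x,y,z)=\sum_{k\ge 0}a_k^{(i)}(x,y)\,z^k/k!$, the Helmholtz equation is equivalent to the recursion $a_{k+2}^{(i)}=-(\Delta_{\mathbb R^2}+\lambda^2)a_k^{(i)}$, so each $a_k^{(i)}$ is an iterated shifted Laplacian of entire data and is itself entire on $\mathbb C^2$. The main obstacle, and the technical heart of the argument, is proving that this series converges globally in $z$. This amounts to controlling $(\Delta+\lambda^2)^k$ applied to an entire function $\phi$. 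The argument I would run is: for any $R>0$, set $M(R):=\sup_{|w_1|,|w_2|\le R}|\phi(w_1,w_2)|$, expand $(\Delta+\lambda^2)^k$ combinatorially, and apply bidisk Cauchy estimates on $|x|,|y|\le R/2$ to obtain a bound of the form $|(\Delta+\lambda^2)^k\phi(x,y)|\le (2k)!\,C_\lambda^k M(R)\,R^{-2k}$. Dividing by $k!$ and summing in $z$ then yields geometric convergence whenever $8|z|^2<R^2$, and since $R$ may be taken arbitrarily large by entireness, the resulting series defines a holomorphic function on all of $\mathbb C^3$; restriction to $\mathbb R^3$ produces a real-analytic $u_i$.

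With $u=(u_1,u_2,u_3)$ in hand, the remaining step is to verify that it is genuinely Beltrami and not merely a solution of the vector Helmholtz equation. Define $w:=\curl u-\lambda u$. Using $\curl\curl=\nabla\dive-\Delta$ and $\Delta u=-\lambda^2 u$, a short computation shows each component of $w$ solves $(\Delta+\lambda^2)w_i=0$. By the choice of Cauchy data one has $w|_{z=0}=0$ directly, and using the Helmholtz equation to express $\partial_z^2 u|_{z=0}$ in terms of $u|_{z=0}$ one checks that $\partial_z w|_{z=0}=0$ as well. The uniqueness half of the scalar Helmholtz Cauchy problem established above (applied to zero data) then forces $w\equiv 0$, i.e.\ $\curl u=\lambda u$ on all of $\mathbb R^3$.

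Finally, uniqueness of the Beltrami solution itself is immediate from the local Cauchy-Kovalevskaya theorem of~\cite{EP12}: any two global Beltrami fields with the same tangential datum $\nabla_{\mathbb R^2}F$ on $\{z=0\}$ coincide in a neighborhood of the plane, and, being real-analytic on the connected manifold $\mathbb R^3$, must agree everywhere by analytic continuation.
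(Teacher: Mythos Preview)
Your proposal is correct and follows the same overall architecture as the paper: solve the vector Helmholtz problem globally with the Cauchy data forced by the Beltrami condition, then verify that the resulting field is genuinely Beltrami. The two accounts differ in how each half is executed. For global existence, the paper simply invokes the global Cauchy--Kovalevskaya theorem of Pong\'erard--Wagschal~\cite{PW} as a black box, whereas you prove it by hand via the recursion $a_{k+2}=-(\Delta_{\mathbb R^2}+\lambda^2)a_k$ and bidisk Cauchy estimates. Your stated bound $|(\Delta+\lambda^2)^k\phi|\le (2k)!\,C_\lambda^k M(R)\,R^{-2k}$ is not quite right as written---when you track the $\lambda^2$ contributions carefully an extra $k$-independent prefactor of the type $\cosh(c|\lambda|R)$ appears---but since that factor does not depend on $k$, the geometric convergence for $|z|\lesssim R$, and hence global convergence by letting $R\to\infty$, goes through unchanged. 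For the Beltrami verification, the paper first shows $\dive v=0$ (via Helmholtz uniqueness with zero Cauchy data), deduces the factorization $(\curl-\lambda)(\curl+\lambda)v=0$, and then \emph{defines} $u:=\frac{1}{2\lambda}(\curl+\lambda)v$, finally checking that this $u$ restricts to $\nabla_{\mathbb R^2}F$ on $\{z=0\}$. You instead show directly that $w=\curl u-\lambda u$ satisfies the Helmholtz equation with $w|_{z=0}=\partial_z w|_{z=0}=0$ and conclude $w\equiv 0$. Your route is slightly more direct and self-contained (it bypasses both the citation of~\cite{PW} and the separate divergence-free check), at the cost of a somewhat longer computation to verify $\partial_z w|_{z=0}=0$; the paper's factorization trick avoids that computation but adds an intermediate step. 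Both arguments produce the same field.
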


\begin{proof}
Let us first consider the auxiliary problem
\begin{equation}\label{eq:aux}
\Delta v=-\lambda^2 v
\end{equation}
for a vector field $v=v_1\partial_x+v_2\partial_y+v_3\partial_z$. The action of the Laplacian $\Delta$ on $v$ is understood componentwise. Since we want to prescribe Cauchy data on the plane $\{z=0\}$, it is convenient to write the Helmholtz equation in Cauchy form:

\begin{equation}\label{eq1}
\frac{\partial^2v}{\partial z^2}=-\lambda^2 v-\frac{\partial^2v}{\partial x^2}-\frac{\partial^2v}{\partial y^2}\,.
\end{equation}

We can then apply the Cauchy-Kovalevskaya theorem to Equation~\eqref{eq1} with Cauchy data:

\begin{equation}\label{eq:datum}
\begin{cases}
v|_{\{z=0\}}&=\frac{\partial F}{\partial x}\partial_x+\frac{\partial F}{\partial y}\partial_y\,,\\
\frac{\partial v}{\partial z}\Big|_{\{z=0\}}&=\lambda\frac{\partial F}{\partial y}\partial_x -\lambda \frac{\partial F}{\partial x}\partial_y-\Delta_{\mathbb R^2} F\partial_z\,,
\end{cases}
\end{equation}
where $\Delta_{\mathbb R^2}:=\frac{\partial^2}{\partial x^2}+\frac{\partial^2}{\partial y^2}$ stands for the Laplacian on $\mathbb R^2$.

Since $F$ is an entire function, a straightforward application of the global Cauchy-Kovalevskaya theorem~\cite[Theorem 1.1]{PW} to each component $v_k$, $k=1,2,3$, implies that there exists a unique entire vector field $v$ in $\mathbb{R}^3$ solving Equation~\eqref{eq1} with Cauchy data~\eqref{eq:datum}.

We claim that the vector field $v$ is divergence-free, i.e., $\dive v=0$ in $\mathbb R^3$. First, since the operators $\Delta$ and $\dive$ commute, it is obvious that $\dive v$ satisfies the Helmholtz equation
$$\Delta (\dive v)=-\lambda^2 (\dive v)$$
in $\mathbb R^3$. Now, observe that the Cauchy data~\eqref{eq:datum} imply
\begin{align*}
\dive v|_{\{z=0\}}&= \Big(\frac{\partial v_1}{\partial x}+\frac{\partial v_2}{\partial y}+\frac{\partial v_3}{\partial z}\Big)\Big|_{\{z=0\}}=
\frac{\partial^2 F}{\partial x^2}+\frac{\partial^2 F}{\partial y^2}-\Delta_{\mathbb R^2} F\\
&=0\,,
\end{align*}
and
\begin{align*}
\frac{\partial \dive v}{\partial z}\Big|_{\{z=0\}}&=\Big(\frac{\partial^2 v_1}{\partial x\partial z}+\frac{\partial^2 v_2}{\partial y\partial z}+\frac{\partial^2 v_3}{\partial z^2}\Big)\Big|_{\{z=0\}}\\
&=\lambda \frac{\partial^2 F}{\partial x\partial y}-\lambda \frac{\partial^2 F}{\partial x\partial y}- \lambda^2 v_3|_{\{z=0\}}-\frac{\partial^2 v_3}{\partial x^2}\Big|_{\{z=0\}}-\frac{\partial^2 v_3}{\partial y^2}\Big|_{\{z=0\}}\\
&=0\,,
\end{align*}
where we have used that $v_3$ satisfies the Helmholtz equation~\eqref{eq1} and that $v_3|_{\{z=0\}}=0$.

Since $\dive v$ is analytic, a straightforward application of the Cauchy-Kovalevskaya theorem for the Helmholtz equation then implies that $\dive v=0$ in $\mathbb R^3$, as claimed.

Next, using the vector calculus identity $\curl \curl =\nabla \dive - \Delta$, we easily infer from Equation~\eqref{eq:aux} that the divergence-free vector field $v$ satisfies the following equation
\begin{equation}\label{eq:ll}
(\curl -\lambda)(\curl+\lambda) v=0\,.
\end{equation}

Finally, the desired vector field is defined as
$$u:=\frac{1}{2\lambda}(\curl +\lambda)v\,.$$
Indeed, it follows from Equation~\eqref{eq:ll} that the vector field $u$ is a Beltrami field,
$$(\curl -\lambda) u=0$$
in $\mathbb{R}^3$, which is entire because so is $v$. To check that $u|_{\{z=0\}}=\nabla_{\mathbb R^2} F$ we simply notice that
\begin{align*}
2\lambda u|_{\{z=0\}}&=\Big(\lambda v_1+\frac{\partial v_3}{\partial y}-\frac{\partial v_2}{\partial z}\Big)\Big|_{\{z=0\}}\partial_x+ \Big(\lambda v_2+\frac{\partial v_1}{\partial z}-\frac{\partial v_3}{\partial x}\Big)\Big|_{\{z=0\}}\partial_y\\
&+\Big(\lambda v_3+\frac{\partial v_2}{\partial x}-\frac{\partial v_1}{\partial y}\Big)\Big|_{\{z=0\}}\partial_z\\
&=\Big(\lambda\frac{\partial F}{\partial x}+\lambda\frac{\partial F}{\partial x}\Big)\partial_x+\Big(\lambda\frac{\partial F}{\partial y}+\lambda\frac{\partial F}{\partial y}\Big)\partial_y+\Big(\frac{\partial^2 F}{\partial x\partial y}-\frac{\partial^2 F}{\partial x\partial y}\Big)\partial_z\\
&=2\lambda \nabla_{\mathbb R^2}F\,,
\end{align*}
thus completing the proof of the theorem.
\end{proof}

%
%
%
%
%
%

\section{Construction of a Turing complete gradient field in $\mathbb R^2$} \label{S:TCgrad}

In this section, we prove that there exists a smooth gradient vector field $X$ on the plane that can simulate a universal Turing machine. Moreover, this computational power is \emph{weakly robust} in the sense that there exists an error function $\epsilon:\mathbb R^2\to (0,1)$ (tending to zero at infinity fast enough) such that any smooth vector field $Y$ that is pointwise close to $X$ as
\begin{equation}\label{eq_wrob}
|X(x,y)-Y(x,y)|\leq \epsilon(x,y)
\end{equation}
for all $(x,y)\in\mathbb R^2$, is also Turing complete.

\begin{theorem}\label{th.turingrob}
There exists a $C^\infty$ function $f:\mathbb R^2\to \mathbb R$ such that its gradient $\nabla_{\mathbb R^2}f$ is a weakly robust Turing complete vector field.
\end{theorem}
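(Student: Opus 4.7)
The plan is to implement, via a smooth planar gradient flow, a simulation of a universal Turing machine $T_u$, using the interval encoding of configurations from Section~\ref{SS.encoding} (modified as in Remark~\ref{rem:haltdist} so that halting configurations occupy a cluster of encodings separated from the non-halting ones). Fix $T_u$ and let $\varphi(q,t)=1/(2^q3^r5^s)\in[0,1]$ denote the encoding with associated disjoint intervals $I_{(q,t)}$. The strategy is to design $f$ so that the $y$-coordinate of the flow of $\nabla f$ acts as a step counter that decreases strictly, while the $x$-coordinate, when sampled at $y=n\in\Z$, lies in the interval encoding the $n$-th configuration of $T_u$ on a given input.

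Concretely, I take the ansatz
\[ f(x,y)=-y+h(x,y)\,, \]
with $h\in C^\infty(\R^2)$ satisfying $\sup|\partial_y h|<1/2$; this forces $\dot y\in(-3/2,-1/2)$ along the flow, so $y$ decreases monotonically and every trajectory crosses each horizontal strip $S_n:=\R\times[n-1,n]$ exactly once. Within $S_n$ the planar flow reduces to the one-dimensional non-autonomous evolution
\[ \frac{dx}{dy}=\frac{\partial_x h(x,y)}{\partial_y h(x,y)-1}\,, \]
which, up to the factor $\partial_y h-1\approx-1$, is the $x$-gradient flow of the ``moving potential'' $y\mapsto h(\cdot,y)$. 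I would then construct $h$ inside $S_n$ as a locally supported sum of \emph{traveling wells}, one per configuration $(q,t)$: for each $(q,t)$, a steep well of $h(\cdot,y)$ whose minimum starts at $x=\varphi(q,t)$ on $y=n$ and migrates smoothly to $x=\varphi(\Delta(q,t))$ on $y=n-1$. Its basin is arranged to contain $I_{(q,t)}\times\{n\}$, and the steep walls of the well guarantee that the basin is carried across the strip and compressed well inside $I_{\Delta(q,t)}\times\{n-1\}$.

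Global smoothness of $f$ is obtained by summing an infinite family of bump-supported wells multiplied by rapidly decaying coefficients indexed by $(q,r,s)$, exploiting that the $I_{(q,t)}$ shrink super-exponentially while wells in different strips are automatically separated in $y$. Weak robustness is built in by making the slope of each well much larger than the perturbation scale $\epsilon$ allowed at its location, and by leaving a guard margin between each output interval and the boundary of the next input interval, so that any $Y$ with $|X-Y|\le\epsilon$ still has $\dot y<0$ and still routes $I_{(q,t)}\times\{n\}$ into $I_{\Delta(q,t)}\times\{n-1\}$ on its passage across $S_n$; iterating this property over all strips is exactly the weakly robust simulation. To accommodate the super-exponential shrinkage of the $I_{(q,t)}$ while letting $\epsilon$ be a fixed function on $\R^2$, the channels in strip $S_n$ are laid out in an $n$-dependent rescaling of the $x$-axis, so that the precision demanded at strip $S_n$ corresponds to $\epsilon$ evaluated far enough from the origin for its super-exponential decay to suffice.

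The main obstacle is the bookkeeping of this infinite family: arranging simultaneously that (i) each well exactly implements the transition $\Delta$ on its input interval, (ii) neighboring wells in the same strip do not interact across the disjoint $I_{(q,t)}$, (iii) successive strips glue $C^\infty$-smoothly, and (iv) the resulting robustness radius is an explicit function $\epsilon$ with the required decay at infinity. Once $h$ is constructed, Turing completeness follows immediately: given a machine $T$, input $t$ and finite output pattern $t^*=(t^*_{-k},\dots,t^*_k)$, reduce to $T_u$ on a suitably encoded input, take $p=(\varphi(q_0,t),0)$, and let $U$ be the explicit open union of the intervals at the integer levels $y=-n$ that correspond to halting configurations whose tape matches $t^*$ on positions $-k,\dots,k$. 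By the construction, the trajectory of $\nabla f$ through $p$ meets $U$ iff $T$ halts with the prescribed partial output, and the same property persists for every $Y$ with $|X-Y|\le\epsilon$, yielding the claimed weak robustness.
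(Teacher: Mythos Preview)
Your proposal has a genuine topological obstruction that the paper's construction is specifically designed to avoid. In your scheme, all configurations are encoded in a single horizontal copy of $[0,1]$, and in each strip $S_n$ you want a well for every $(q,t)$ to migrate from $x=\varphi(q,t)$ to $x=\varphi(\Delta(q,t))$, carrying its basin along. But the global transition map $\Delta$ does \emph{not} preserve the linear order induced by $\varphi$: it is easy to produce configurations $c_1,c_2$ with $\varphi(c_1)>\varphi(c_2)$ yet $\varphi(\Delta(c_1))<\varphi(\Delta(c_2))$ (e.g.\ two configurations differing only at the head symbol whose transitions write very different symbols or change state differently). The trajectories carrying $I_{c_1}\times\{n\}$ and $I_{c_2}\times\{n\}$ to the next level would then have to cross inside $S_n$, which is impossible for a smooth planar flow with $\dot y<0$ (trajectories are graphs over $y$). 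No amount of well steepness, guard margins, or $n$-dependent rescaling of the $x$-axis fixes this, since all of these preserve the horizontal order.

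The paper resolves this by abandoning the idea of implementing $\Delta$ as a strip-to-strip return map acting on all configurations at once. Instead it assigns to each \emph{input} $c_i$ its own vertical band $[2i,2i+1]\times\mathbb R$ and builds a single smooth curve $\gamma_i$ in that band interpolating the points $p^i_l=(\Delta^l(c_i)+2i,\,l)$; within a band there is only one relevant trajectory, so no crossing can occur. The gradient structure is then obtained very cleanly: in tubular coordinates $(s,\rho)$ around $\gamma_i$ one sets $f_i=\Lambda s-\rho^2/2$, whose gradient is tangent to $\gamma_i$ and contracts normally at unit rate, and the weak robustness follows from this uniform contraction together with a Gronwall comparison. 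The price is that the construction genuinely uses the non-compactness of $\mathbb R^2$ (infinitely many bands), which is exactly what later feeds into the Cauchy--Kovalevskaya step. Your ``all inputs in one strip'' picture cannot work as written; to salvage it you would need to spatially separate inputs, at which point you are essentially reinventing the paper's band construction.
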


All along this section we use the notation and results presented in Section~\ref{SS.encoding} without further mention, and we assume that $T=(Q, q_0,q_{halt}, \Sigma, \delta)$ is a universal Turing machine (although all our constructions work for an arbitrary Turing machine). For the sake of simplicity, we shall use the same notation $c$ for a configuration $(q,t)$ and its encoding as a point $\varphi(q,t)$ in $[0,1]$ (and the same for $\Delta(c)$, $\Delta^2(c)$, etc.). Additionally, if $I_{(q,t)}=:(a,b)\subset [0,1]$ is the interval that encodes $(q,t)$, we denote by
$$I_{(q,t)}^j:= (a+2j,b+2j)$$
the translation of $I_{(q,t)}$ contained in $[2j,2j+1]$.

Since we want to construct a planar vector field that simulates a Turing machine, it is convenient to introduce a planar encoding that relates configurations of the machine with open sets of $\mathbb{R}^2$:
\begin{defi}\label{D:ep}
Let $\varepsilon>0$ be any small but fixed constant. To each configuration $(q,t)$ we assign the open set
\begin{equation}\label{eq:encod}
(q,t) \longmapsto \bigcup_{j,k=0}^\infty I_{(q,t)}^j \times (k-\varepsilon/2,k+\varepsilon/2) =: U_{(q,t)}\subset \mathbb R^2\,,
\end{equation}
which consists of a countable union of pairwise disjoint squares. It is clear that $U_{(q,t)}\cap U_{(q',t')}=\emptyset$ if $(q,t)\neq (q',t')$.
\end{defi}

\subsection{Step~1: a good system of encoding curves}\label{SS.curves}

We aim to construct a countable family of planar unbounded curves (pairwise disjoint) that encodes (in some sense that will become clear later) the evolution of the Turing machine.

Denoting by $C_0$ the space of initial configurations $\{q_0\} \times A$ of the machine, it is clear that this set is ordered by the identification with points in $[0,1]$, i.e., $C_0=\{c_i\}_{i=0}^\infty$, $c_0>c_1>...$ (this is equivalent to ordering the configurations increasingly as integers with respect to their representation of the form $2^q3^r5^s$). Each configuration $c_i$ has its associated orbit by the global transition function: $c_i, \Delta(c_i), \Delta^2(c_i),...$. We observe that this orbit is infinite because of the way we have extended the transition function to act on halting states, which implies that if $\Delta^n(c_i)$ is a halting configuration, then $\Delta^j(c_i)=\Delta^n(c_i)$ for all $j\geq n$.

We are now ready to associate to each initial configuration $c_i$, $i\geq 0$, an infinite sequence of points $\{p^i_0, p^i_1,...\}\subset\mathbb R^2$:
$$
p^i_l:=(\Delta^l(c_i)+2i,l)\in\mathbb R^2\,,
$$
i.e., $p^i_l$ is the point whose $x$-coordinate is the number in the middle of the interval $I_{\Delta^l(c_i)}^i$ and its $y$-coordinate is $l$. Roughly speaking, $p^i_l$ corresponds to the configuration of the machine after $l$ steps with input $c_i$, taken in the ``band" number $i$ and at height $l$.

\begin{figure}[!h]
\begin{center}
\begin{tikzpicture}
     \node[anchor=south west,inner sep=0] at (0,0) {\includegraphics[scale=0.18]{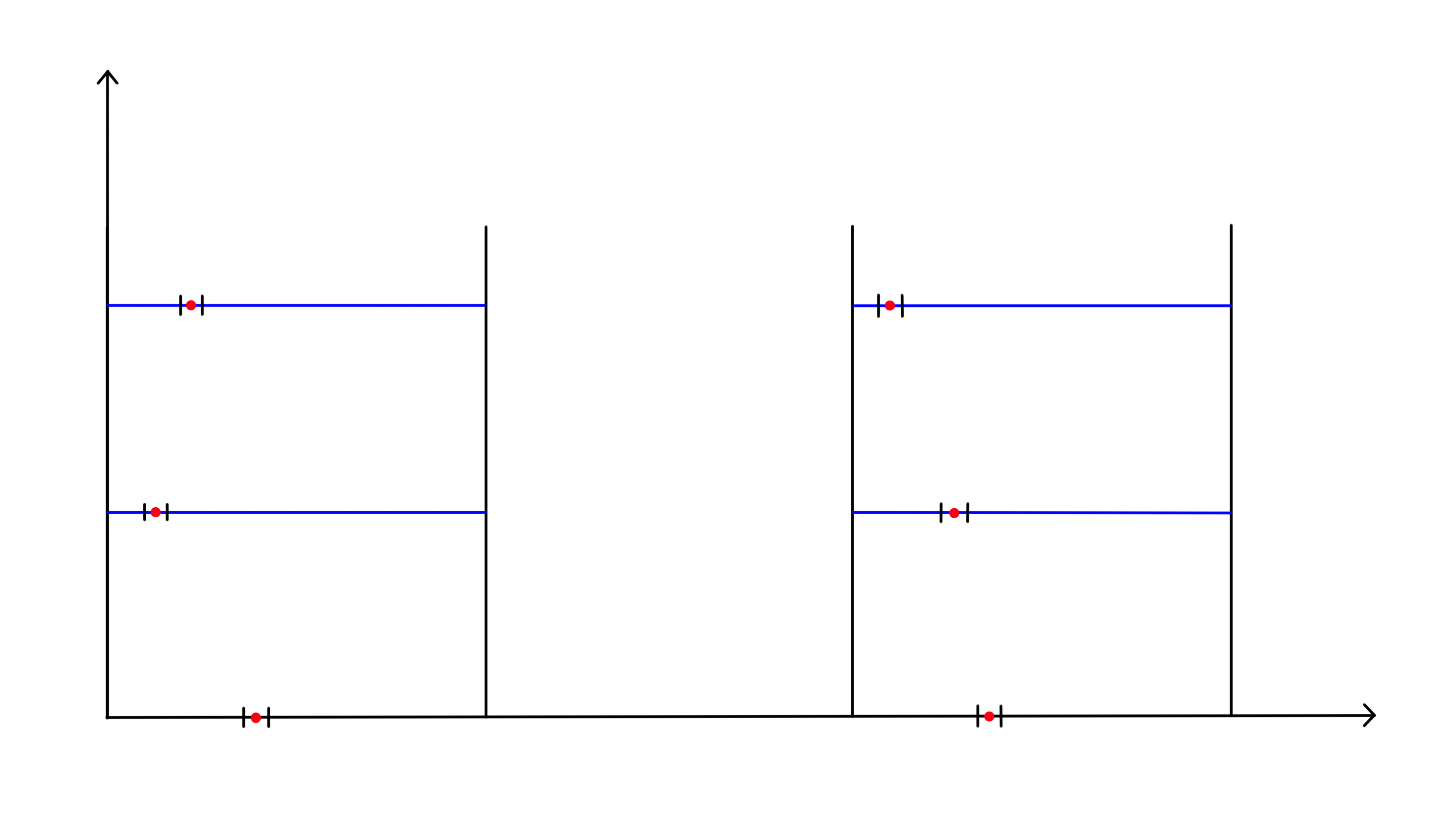}};

     \node[red] at (2.45,1.55) {$p_0^0$};
     \node[red] at (1.55,2.7) {$p_1^0$};
     \node[red] at (1.8, 4.7) {$p_2^0$};

     \node[red] at (9.5,0.68) {$p_0^1$};
     \node[red] at (9.2,2.7) {$p_1^1$};
     \node[red] at (8.6, 4.7) {$p_2^1$};

     \node [scale=0.8] at (2.6,0.68) {$I_{c_0}$};

     \draw [thick] (1.418, 1.04)-- (1.418,1.24);
     \draw [thick] (1.418+0.2, 1.04)-- (1.418+0.2,1.24);

     \node [scale=0.8] at (1.63, 0.68) {$I_{\Delta(c_0)}$};

     \node at (13.5,0.6) {$x$};
     \node at (0.5,7.5) {$y$};

     \node at (4.65,0.7) {$1$};
     \node at (8.2,0.7) {$2$};
     \node at (11.85,0.7) {$3$};

     \draw [thick] (4.673,1.04) -- (4.673, 1.17);
     \draw [thick] (8.201,1.04) -- (8.201, 1.17);
     \draw [thick] (11.84,1.04) -- (11.84, 1.17);

     \draw [thick] (0.95,3.13)--(1.05,3.13);
     \node at (0.65,3.12) {$1$};

\end{tikzpicture}
\caption{Points representing the first steps of the algorithm with inputs $c_0$ and $c_1$}
\label{fig:points}
\end{center}
\end{figure}

Finally, we can construct a well-behaved family of pairwise disjoint smooth curves $\{\gamma_i\}_{i=0}^\infty$ that interpolates all the points $\{p^i_l\}$:

\begin{lemma}\label{L:curves}
For each $i\geq 0$ there exists an open $C^\infty$ curve $\gamma_i$ that is properly embedded in $[2i,2i+1] \times (-\infty,\infty)$ and satisfies:
\begin{enumerate}
\item The (unit) tangent vector has a nonzero $y$-projection at each point of $\gamma_i$, which is bounded from below by a positive constant on the whole curve.
\item It contains all the points $\{p^i_l\}_{l=0}^\infty$.
\item The curvature function $\kappa_i$ is upper bounded:
\[
|\kappa_i(p)|<15
\]
for all $p\in\gamma_i$.
\item $\gamma_i$ is vertical in an $\varepsilon$-neighborhood of $p^i_l$, i.e., it is of the form
\[
\{(\Delta^l(c_i)+2i,y): y\in(l-\varepsilon,l+\varepsilon)\}
\]
for all $i\geq0$ and $l\geq0$. Here $\varepsilon>0$ is the small constant introduced in Definition~\ref{D:ep}.
\end{enumerate}
In particular, $\gamma_i$ has infinite length.
\end{lemma}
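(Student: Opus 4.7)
The plan is to build $\gamma_i$ as the graph $\{(F_i(y),y):y\in\mathbb R\}$ of a smooth function $F_i:\mathbb R\to[2i,2i+1]$. The function $F_i$ will be piecewise defined, alternating between constant segments on the horizontal bands $B_l:=[l-\varepsilon,l+\varepsilon]$ (where the curve must be vertical through $p^i_l$) and smooth monotone interpolations on the connecting bands $S_l:=[l+\varepsilon,(l+1)-\varepsilon]$. By realizing $\gamma_i$ as a graph $x=F_i(y)$ we automatically get a properly embedded $C^\infty$ curve in $[2i,2i+1]\times\mathbb R$ with tangent $(F_i'(y),1)$ never horizontal, and pairwise disjointness from $\gamma_j$ for $j\neq i$ is immediate since $\gamma_i\subset[2i,2i+1]\times\mathbb R$.

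Concretely, I would fix once and for all a smooth transition profile $\phi\in C^\infty([0,1],[0,1])$ with $\phi(0)=0$, $\phi(1)=1$ and $\phi^{(n)}(0)=\phi^{(n)}(1)=0$ for every $n\geq 1$ (any mollified Heaviside has this property), and then set
\[
F_i(y):=\begin{cases}\Delta^l(c_i)+2i,& y\in B_l,\\ (\Delta^l(c_i)+2i)+\bigl(\Delta^{l+1}(c_i)-\Delta^l(c_i)\bigr)\,\phi\!\left(\dfrac{y-l-\varepsilon}{1-2\varepsilon}\right),& y\in S_l,\end{cases}
\]
for $l\geq 0$, and $F_i(y)=c_i+2i$ for $y\leq \varepsilon$. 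Because all derivatives of $\phi$ vanish at $0$ and $1$, the two pieces match to infinite order at $y=l\pm\varepsilon$, so $F_i\in C^\infty(\mathbb R)$. By construction $F_i(l)=\Delta^l(c_i)+2i$, $\gamma_i$ contains every $p^i_l$, and it is exactly vertical on each band $B_l$, which establishes items (2) and (4) of the lemma, while infinite length is immediate from $y$ ranging over $\mathbb R$.

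The quantitative items (1) and (3) reduce to bounds on $F_i'$ and $F_i''$. Since $|\Delta^{l+1}(c_i)-\Delta^l(c_i)|\leq 1$, the chain rule gives $|F_i'(y)|\leq \|\phi'\|_\infty/(1-2\varepsilon)$ and $|F_i''(y)|\leq \|\phi''\|_\infty/(1-2\varepsilon)^2$, both uniform in $i$ and $l$. The unit tangent has $y$-component $1/\sqrt{1+F_i'(y)^2}$, which is then bounded below by a positive constant, giving (1). The curvature of a graph is
\[
\kappa_i(y)=\frac{|F_i''(y)|}{(1+F_i'(y)^2)^{3/2}}\leq |F_i''(y)|,
\]
so choosing $\phi$ once and for all with $\|\phi''\|_\infty$ small enough (e.g.\ less than $15(1-2\varepsilon)^2$, which is easy to arrange by taking $\phi$ to be the antiderivative of a sufficiently spread-out smooth bump of integral one), we secure $|\kappa_i|<15$ uniformly.

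The main (and essentially only) obstacle is the uniform curvature bound (3): although each individual interpolation is innocuous, we need the same constant $15$ to work simultaneously for all $i$ and all $l$, even though the horizontal jumps $\Delta^{l+1}(c_i)-\Delta^l(c_i)$ can be as large as $1$ in absolute value. This is exactly why a single profile $\phi$ is chosen in advance and everything is rescaled from it; once $\phi$ is fixed with sufficiently small $\|\phi''\|_\infty$ and $\varepsilon$ is chosen small enough, the estimate propagates automatically. Everything else (smoothness at the junctions, non-vanishing $y$-component of the tangent, disjointness, proper embeddedness, infinite length) follows from the graph representation with no further work.
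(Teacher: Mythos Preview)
Your proposal is correct and follows essentially the same route as the paper: the paper also writes $\gamma_i$ as a graph $(\lambda_i(s),s)$ over the $y$-axis, takes $\lambda_i$ constant on each $[l-\varepsilon,l+\varepsilon]$, and interpolates on $(l+\varepsilon,l+1-\varepsilon)$ by a single fixed smooth transition profile (there the explicit mollifier built from $e^{-1/(s-l)}e^{-1/(l+1-s)}$), then bounds $|\kappa_i|=|\lambda_i''|/(1+(\lambda_i')^2)^{3/2}$ uniformly using $|\Delta^{l+1}(c_i)-\Delta^l(c_i)|\le 1$. The only cosmetic difference is that the paper fixes a concrete profile and checks the numerical inequality $|\kappa_i|<100/7<15$ directly, whereas you invoke the freedom to choose $\phi$; note that on the fixed interval $[0,1]$ one cannot make $\|\phi''\|_\infty$ arbitrarily small (there is a lower bound of order~$4$ coming from $\phi'(0)=\phi'(1)=0$ and $\int_0^1\phi'=1$), so your phrase ``sufficiently spread-out'' is slightly misleading, but $\|\phi''\|_\infty<15(1-2\varepsilon)^2$ is certainly attainable for small $\varepsilon$, so the argument goes through.
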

\begin{proof}
We can describe explicitly each curve $\gamma_i$ in terms of a parametrization that we denote by $\gamma_i(s)$, $s\in\mathbb R$ (notice that $s$ is not arc-length here). Indeed, the curve is given by
\[
\gamma_i(s)=(\lambda_i(s),s)\,,
\]
where the function $\lambda_i(s)$ is defined as
\begin{align*}
\lambda_i(s):=\begin{cases}
c_i+2i \qquad \text{if } s\leq \varepsilon\,,\\
\Delta^l(c_i)+2i \qquad \text{if } |s-l|\leq \varepsilon
\end{cases}
\end{align*}
for all $l\geq 1$, and
$$
\lambda_i(s)=\frac{\int_{l+\varepsilon}^s e^{\frac{-1}{s'-l}}e^{\frac{-1}{l+1-s'}}ds'}{\int_{l+\varepsilon}^{l+1-\varepsilon} e^{\frac{-1}{s'-l}}e^{\frac{-1}{l+1-s'}}ds'}\Big(\Delta^{l+1}(c_i)-\Delta^l(c_i)\Big)+2i+\Delta^l(c_i)
$$
if $s\in (l+\varepsilon,l+1-\varepsilon)$ and $l\geq0$.
Obviously $\lambda_i(s)$ is a $C^\infty$ function. By construction $\gamma_i(l)=p^i_l$ and $\gamma'_i(s)=(\lambda_i'(s),1)$, so its unit tangent vector $T_i(s)$ has a $y$-component for all $s\in\mathbb R$ given by
\[
T_i(s)\cdot\partial_y=\frac{1}{(1+\lambda'_i(s)^2)^{1/2}}\,,
\]
whose infimum on $s\in\mathbb R$ is positive. In particular, $\gamma_i$ is vertical in an $\varepsilon$-neighborhood of each $p^i_l$.

Moreover, since
\[
\lambda'_i(s)=\frac{e^{\frac{-1}{s-l}}e^{\frac{-1}{l+1-s}}}{\int_{l+\varepsilon}^{l+1-\varepsilon} e^{\frac{-1}{s'-l}}e^{\frac{-1}{l+1-s'}}ds'}\Big(\Delta^{l+1}(c_i)-\Delta^l(c_i)\Big)
\]
if $s\in(l+\varepsilon,l+1-\varepsilon)$ (and it is $0$ otherwise), we infer that $\lambda'_i$ has a constant sign in each interval $(l+\varepsilon,l+1-\varepsilon)$, given by the sign of the difference $\Delta^{l+1}(c_i)-\Delta^l(c_i)$. Using that $\Delta^l(c_i)\in [2i,2i+1]$ for all $l$, we conclude that the curve $\gamma_i$ is contained (and properly embedded) in the band $[2i,2i+1] \times (-\infty,\infty)$.

Finally, to check that the curvature is uniformly bounded, we notice that
$$
\kappa_i(s)=-\lambda_i''(s)\Big(1+(\lambda'_i(s))^2\Big)^{-3/2}\,,
$$
where
$$
\lambda''_i(s)=\frac{(\Delta^{l+1}(c_i)-\Delta^l(c_i))e^{\frac{-1}{s-l}}e^{\frac{-1}{l+1-s}}}{\int_{l+\varepsilon}^{l+1-\varepsilon} e^{\frac{-1}{s'-l}}e^{\frac{-1}{l+1-s'}}ds'}\cdot\bigg(\frac{1}{(s-l)^2}-\frac{1}{(l+1-s)^2}\bigg)
$$
if $s\in (l+\varepsilon,l+1-\varepsilon)$, and it is $0$ otherwise. In order to obtain an upper bound for $|\lambda''_i|$, we can reduce all the computations to the interval $s\in[\varepsilon,1-\varepsilon]$ after a translation $s\to s+l$. Therefore, noticing that
$$
\int_0^1 e^{\frac{-1}{s'}}e^{\frac{-1}{1-s'}}ds'>7\cdot 10^{-3}
$$
and
\begin{align*}
\text{sup}_{s\in(0,1)} \bigg(e^{\frac{-1}{s}}e^{\frac{-1}{1-s}}\cdot\Big(\frac{1}{s^2}-\frac{1}{(1-s)^2}\Big)\bigg)<10^{-1}\,,
\end{align*}
we deduce that if $\varepsilon$ is small enough
\[
|\kappa_i(s)|<\frac{100}{7}<15\,,
\]
for all $i\geq0$ and $s\in\mathbb R$, where we have used that $|\Delta^{l+1}(c_i)-\Delta^l(c_i)|\leq 1$. This completes the proof of the lemma.
\end{proof}

\begin{figure}[!h]
\begin{center}
\begin{tikzpicture}
     \node[anchor=south west,inner sep=0] at (0,0) {\includegraphics[scale=0.18]{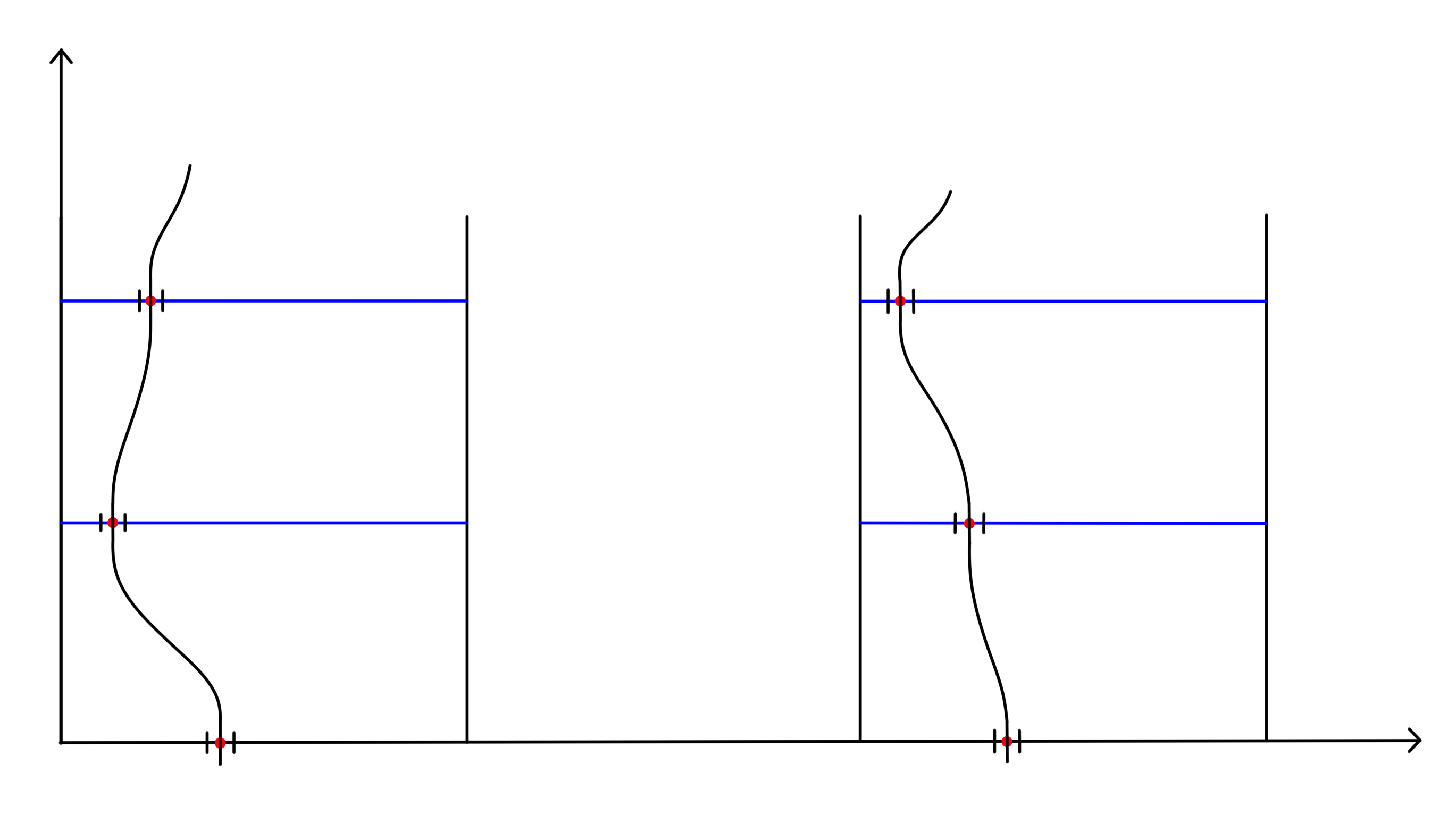}};

               \node at (13.1,0.4) {$x$};
     \node at (0.2,7.2) {$y$};

     \node at (1.7,3.3) {$\gamma_0$};
      \node at (9.1,3.3) {$\gamma_1$};

\end{tikzpicture}
\caption{Curves following the orbits of the Turing machine}
\label{fig:curves}
\end{center}
\end{figure}

\subsection{Step~2: Construction of the function $f$}\label{SS.grad}

In the previous section, we have constructed a well-behaved family $\{\gamma_i\}_{i=0}^\infty$ of smooth curves that interpolate all the points $p^i_l$ associated with the dynamics of the Turing machine. Our aim now is to construct a function $f\in C^\infty(\mathbb R^2)$ such that the vector field $\nabla_{\mathbb R^2}f$ is tangent to the curves $\gamma_i$ and satisfies an additional contraction property that will be key in Step~3 to prove the weak robustness of the computational power of $\nabla_{\mathbb R^2} f$.

To this end we introduce a ``thickening'' $B_i$ (a metric neighborhood) of each curve $\gamma_i$ defined as
$$
B_i:=\Big\{p\in\mathbb R^2: \text{dist}(p,\gamma_i)<\frac{1}{16}\Big\}\,.
$$
We notice that this neighborhood is well defined and diffeomorphic to a product $\mathbb R\times (-1/16,1/16)$ because the injectivity radius of $\gamma_i$ is lower bounded by $\frac{1}{15}$ on account of Lemma~\ref{L:curves}. It also holds that $B_i$ contains the boxes $I^i_{\Delta^l(c_i)}\times (l-\varepsilon,l+\varepsilon)$ for all $l\geq 0$ because the curve $\gamma_i$ is vertical in such a region (cf. Lemma~\ref{L:curves}) and $I^i_{\Delta^l(c_i)}\times\{l\}$ is a horizontal interval centered at $p^i_l$ of radius $\alpha^2/8$, which is at most $1/32$ (because $\alpha\leq 1/2$).

Let us now parametrize each neighborhood $B_i$ with coordinates
$$(s,\rho)\in \mathbb R\times (-1/16,1/16)\,,$$
where $s$ is a signed arc-length parametrization of $\gamma_i$ (taking $\gamma_i(0)=(c_i+2i,0)$) and $\rho$ is the signed distance function to $\gamma_i$, which is $C^\infty$, see e.g.~\cite{KP}. Using the unit normal vector $N_i(s)$ along $\gamma_i$, we can write the parametrization as
\begin{align*}
\Phi: (-\infty, \infty) \times (-1/16,1/16) & \longmapsto B_i\\
		(s,\rho) &\longmapsto \gamma_i(s) + \rho N_i(s).
\end{align*}
In these coordinates, the Euclidean metric reads as
$$ g_0=d\rho^2+\left(1-\kappa_i(s)\rho\right)ds^2, $$
where $\kappa_i(s)$ is the (signed) curvature of $\gamma_i$.

\begin{Remark}\label{R:tran}
Since $|\kappa_i(s)\rho|< \frac{15}{16}$ in the band $B_i$, we conclude that the vector field
\[
\partial_s=(1-\kappa_i(s)\rho)\gamma'_i(s)
\]
is transverse to the lines $\{y=\text{constant}\}$ on the whole band.
\end{Remark}

Next we consider a function $f_i$ of the form
\begin{align*}
f_i: B_i &\longrightarrow \mathbb{R}\\
	(s,\rho) &\longmapsto H_i(s) -\frac{\rho^2}{2}\,,
\end{align*}
where $H_i(s)$ is a function (that we will choose later) whose derivative $\Lambda_i(s):=H_i'(s)>0$ we assume to be bounded and positive for all $s\in\mathbb R$. The gradient vector field of $f_i$ in each neighborhood $B_i$ is
$$ X_i:=\nabla_{\mathbb R^2} f_i= \frac{\Lambda_i(s)}{1-\kappa_i(s)\rho}\partial_s- \rho \partial_\rho\,.$$
It is clear that $X_i$ is tangent to the curve $\gamma_i$ and contracts in the normal direction. In particular, the estimate
\[
X_i\cdot \nabla \rho^2=-2\rho^2\leq 0
\]
implies that any neighborhood $\{\rho<\text{constant}\}$ is invariant under the forward flow of $X_i$, so in particular the band $B_i$. Moreover, since $\Lambda_i(s)>0$, the vector field $X_i$ is transverse to the lines $\{s=\text{constant}\}$, and also to the lines $\{y=\text{constant}\}$ in each box $B_i\cap \Big(\mathbb R\times(l-\varepsilon,l+\varepsilon)\Big)$.

%

Next, we show that the function $\Lambda_i(s)$ can be chosen so that $X_i$ strongly contracts certain subintervals of $I^i_{\Delta^l(c_i)}$, a property that will be used in Section~\ref{SS.wr}. First, we need the following technical lemma that gives a lower bound for the size of each interval $|I_{\Delta^l(c_i)}^i|$.

\begin{lemma}\label{lem:intsize}
The following lower bound holds for all $i,l$:
$$ |I_{\Delta^l(c_i)}^i| > { \frac{1}{2^{2m+2}3^{2\cdot10^{i+l+1}}5^{2\cdot 10^{i+l+1}}} }=:A_{i,l}\,. $$

\end{lemma}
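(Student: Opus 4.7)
The plan is to bound above the integers $q, r, s$ appearing in the encoding of the configuration $\Delta^l(c_i)$. Since $I^i_{(q,t)}$ is a translation of $I_{(q,t)}$, equation~\eqref{eq:intsize} gives $|I^i_{\Delta^l(c_i)}| = 1/(2^{q+4} 3^r 5^s)$, so it suffices to establish $q \leq m$ together with $r, s \leq 10^{i+l+1}$. The state bound is immediate from $Q = \{1, \dots, m\}$. The essential work lies in bounding $r$ and $s$, which I would carry out in two independent stages.

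First, I would bound the parameters $r_i, s_i$ of the initial tape $t_i$. Since the initial configurations $c_i = (q_0, t_i)$ are enumerated in decreasing order of $\varphi$ and $\varphi(c_i) = 1/(2^{q_0} 3^{r_i} 5^{s_i})$, the values $3^{r_i} 5^{s_i}$ grow in $i$. The $i+1$ distinct numbers $3^0, 3^1, \dots, 3^i$ are all of the form $3^r 5^s$ and bounded by $3^i$, so the $(i+1)$-th smallest such value satisfies $3^{r_i} 5^{s_i} \leq 3^i$, giving $r_i, s_i \leq i$. In particular the decimal representations of $r_i$ and $s_i$ have lengths at most $\lfloor\log_{10}(i+1)\rfloor + 1$, so that the initial support indices satisfy $a_i, b_i \leq \log_{10}(i+1)$.

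Second, I would track the growth of the non-blank tape support under iteration of $\Delta$. Viewing the tape in absolute coordinates with a moving head, one step writes at most one new non-blank symbol (at the head's current position) and shifts the head by at most one cell; hence the set of non-blank absolute positions changes by at most one element per step. Since the width $a+b$ in the head's reference frame equals the width of this set of absolute positions, it grows by at most $1$ per step, so after $l$ steps one has $a + b \leq a_i + b_i + l \leq 2\log_{10}(i+1) + l$. Combining with $r < 10^{b+1}$ and $b \leq a+b$ yields $r < (i+1)^2 \,10^{l+1}$, and the elementary inequality $(i+1)^2 \leq 10^i$, valid for every $i \geq 0$, then gives $r < 10^{i+l+1}$; the identical argument applies to $s$.

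Substituting into the size formula produces $|I^i_{\Delta^l(c_i)}| > 1/(2^{m+4} 3^{10^{i+l+1}} 5^{10^{i+l+1}}) = A_{i,l}$, which is the claimed estimate. The main conceptual observation is the ``width-grows-by-at-most-one'' property of a Turing step, whose proof I expect to be the most delicate point; everything else reduces to routine digit-counting and the elementary exponential bound $(i+1)^2 \leq 10^i$.
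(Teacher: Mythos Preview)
Your approach is essentially the same as the paper's: bound $q \le m$, then bound $r$ and $s$ by first controlling the initial values $r_i, s_i$ and then tracking their growth over $l$ steps. Your initial bound $r_i, s_i \le i$ (via the pigeonhole with $3^0,\dots,3^i$) is in fact sharper and more carefully justified than the paper's, which merely asserts that $r_i, s_i$ have ``at most $i$ digits'' without a detailed argument.

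There is, however, a gap in your growth step. From ``the set of non-blank absolute positions changes by at most one element per step'' one cannot conclude that its \emph{width} grows by at most one: adding a single element far from the current hull can increase the width arbitrarily. Concretely, if the head drifts right over blank cells for several steps and then writes a non-blank symbol, the width of the support jumps by more than one in that single step. What does grow by at most one per step is the width of the convex hull of $(\text{support}) \cup \{\text{head position}\}$, since the only possible new support point is the old head location and the only possibly-external point after the step is the new head, one cell away. Tracking this slightly larger quantity repairs your argument and still yields $a+b \le a_i + b_i + l$.

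The paper sidesteps this subtlety by bounding the digit lengths of $r$ and $s$ \emph{separately}: a single shift changes $b$ by at most one (writes at position $0 \le b$ never increase $b$), hence the digit count of $r$ increases by at most one per step, and likewise for $a$ and $s$. This directly gives $r,s < 10^{i+l+1}$ without the detour through $a+b$ and the auxiliary inequality $(i+1)^2 \le 10^i$.
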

\begin{proof}
Recall that the point representation of an input $c_i$ is of the form
$$\frac{1}{2^{q_0}3^{r}5^{s}} \in [0,1]\,,$$
so according to the ordering that we have introduced, the input tape of $c_i$ is such that the integer $2^{q_0}3^{r}5^{s}$ is the $i^{th}$ smallest number within all possible input tapes. By definition of the integers $r$ and $s$, $c_i$ roughly depends on the number of digits of the tape at left and right of the zeroth position. For example, the first input $c_0$ has an empty tape (i.e., $r=s=0$), and the next input $c_1$ corresponds to $s=0, r=1$. It should be clear from this ordering that the input $c_i$ is associated with integers $r$ and $s$ that have each at most $i$ digits (in fact less in general, but we avoid giving a sharp estimate as it is not necessary for our discussion). This proves that a lower bound on the size of the interval $I^i_{c_i}$ is given by
\begin{equation}\label{eq:inpsize}
   |I^i_{c_i}| > {\frac{1}{2^{2q_0+2}3^{2\cdot10^{i+1}}5^{2\cdot10^{i+1}}}\,.}
 \end{equation}

The size of $I^i_{\Delta^l(c_i)}$ depends on the configuration $\Delta^l(c_i)$.  We can argue as before to analyze how much the size of an interval can be decreased after $l$ steps of the algorithm, which is equivalent to studying how much the representation in $[0,1]$ becomes smaller. Whatever the state of the configuration $\Delta^l(c_i)$ is, it will always be a number smaller or equal than $m$, since this is the maximal number in the set $Q$. As shown before, $c_i$ has a tape such that $r$ and $s$ have at most $i$ digits. Since at each step of the algorithm there is a single $\varepsilon$-shift, with $\varepsilon \in \{-1,0,1\}$, we infer that the numbers $r$ and $s$ will have their digit lengths increased by one at most in each step. Hence after $l$ steps of the algorithm, the numbers representing the left and right sides of the tape will have at most $i+l$ nonzero digits. Therefore, we conclude that
$$  |I_{\Delta^l(c_i)}^i| > {\frac{1}{2^{2m+2}3^{2\cdot10^{i+l+1}}5^{2\cdot10^{i+l+1}}}\,,} $$
as we wanted to prove.
\end{proof}

We are now ready to prove the aforementioned strongly contracting property of $\nabla_{\mathbb R^2}f_i$. In the statement, we denote by $s^i_l$ the distance between the points $p^i_0$ and $p^i_l$ measured along the curve $\gamma_i$. If $\varepsilon$ is the small constant introduced before, it is evident from the properties of our construction that
\[
\{(s,\rho):|s-s^i_l|<\varepsilon,|\rho|<1/16\}=B_i\cap \{|y-l|<\varepsilon\}\,.
\]

\begin{prop}\label{prop:contrac}
Assume that $\Lambda_i(s)\leq\frac{K_i}{16C}e^{-Cs}$ where $K_i:= \frac{1}{\ln(15^{3\cdot10^{i+2}})}$ and $C:=\ln 10$.
Then for each $i\geq0$ and $l\geq 0$ the integral curves $(s(t),\rho(t))$ of $X_i$ with initial data $s(0)=s^i_l$ and $|\rho(0)|<\frac{A_{i,l}}{2^j}$ for some $j\geq 1$ satisfy
\[
|\rho(t)|<\frac{A_{i,l+1}}{2^{j+1}}
\]
whenever $|s(t)-s^i_{l+1}|<\varepsilon$.
\end{prop}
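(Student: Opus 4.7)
The plan is a direct computation with the ODE system defining the integral curves of $X_i$ in the Fermi coordinates $(s,\rho)$. There the field reads $X_i=\frac{\Lambda}{1-\kappa_i(s)\rho}\,\partial_s-\rho\,\partial_\rho$, so an integral curve satisfies the decoupled system $\dot s=\Lambda/(1-\kappa_i(s)\rho)$ and $\dot\rho=-\rho$. The $\rho$-equation integrates immediately to $\rho(t)=\rho(0)\,e^{-t}$, hence $|\rho|$ is strictly decreasing in $t$. It is therefore enough to verify the bound $|\rho(t_1)|<A_{i,l+1}/2^{j+1}$ at the first time $t_1$ with $s(t_1)=s^i_{l+1}-\varepsilon$: by monotonicity of $|\rho|$ the estimate then propagates automatically to every later $t$ in the window $|s(t)-s^i_{l+1}|<\varepsilon$. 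Note that $j$ enters only as a harmless multiplicative factor $2^{-j}$ that cancels between the two sides, so the argument will be uniform in $j$.

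Next I would obtain a lower bound on $t_1$ from the $\dot s$ equation. By Lemma~\ref{L:curves} one has $|\kappa_i|<15$, and by hypothesis $|\rho(t)|\le|\rho(0)|<A_{i,l}/2^j<1/16$, so $|\kappa_i(s)\rho(t)|<15/16$ and consequently $\dot s\le 16\Lambda$. The arc-length of $\gamma_i$ between $p^i_l$ and $p^i_{l+1}$ is at least $1$, since the vertical span of the curve equals $1$ between these two points and the tangent vector of $\gamma_i$ has unit norm; integrating the speed bound up to the time $t_1$ then gives
\[
t_1\ \ge\ \frac{s^i_{l+1}-\varepsilon-s^i_l}{16\Lambda}\ \ge\ \frac{1-\varepsilon}{16\Lambda}.
\]

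Combining the two estimates, $|\rho(t_1)|\le (A_{i,l}/2^j)\,e^{-(1-\varepsilon)/(16\Lambda)}$. The hypothesis $\Lambda<\Lambda_0=1/(320\ln 15+32\ln 2)$, together with the smallness of $\varepsilon$ (one may take $\varepsilon<1/2$), forces the exponential to be no larger than $1/(2\cdot 15^{10})$, so the conclusion reduces to the combinatorial inequality $A_{i,l+1}\ge A_{i,l}/15^{10}$, which one reads off directly from the explicit formula for $A_{i,l}$ in Lemma~\ref{lem:intsize}. I expect this last step---lining up the exponential-decay factor with the interval-size ratio---to be the most delicate point of the proof, since the precise constant $\Lambda_0$ in the hypothesis is manufactured exactly so that the exponential contraction of $|\rho|$ beats the ratio $A_{i,l}/A_{i,l+1}$. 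The ODE analysis itself is routine; the geometric content of the argument lies entirely in the monotone decay of $|\rho|$ along the gradient flow and in the uniform curvature bound of Lemma~\ref{L:curves}.
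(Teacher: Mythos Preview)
Your argument is correct and essentially identical to the paper's: integrate $\rho(t)=\rho(0)e^{-t}$, bound $\dot s<16\Lambda$ via the curvature estimate $|\kappa_i|<15$ from Lemma~\ref{L:curves}, deduce a lower bound $t_1\ge(1-\varepsilon)/(16\Lambda)>1/(32\Lambda)$ on the travel time to the window $|s-s^i_{l+1}|<\varepsilon$, and conclude using the ratio $A_{i,l}/A_{i,l+1}=15^{10}$ together with the definition of $\Lambda_0$. The only cosmetic difference is your observation that, by monotonicity of $|\rho|$, it suffices to check the bound at the first entry time $t_1$, whereas the paper verifies it directly for every $c\in(-\varepsilon,\varepsilon)$.
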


\begin{proof}
The trajectory $(s(t),\rho(t))$ of $X_i$ satisfies the differential equation
\begin{equation}\label{eq:gradientflow}
\begin{cases}
\frac{ds}{dt}=\frac{\Lambda_i(s)}{1-\kappa_i(s(t))\rho(t)}\,,\\
\frac{d\rho}{dt}=-\rho(t)\,,
\end{cases}
\end{equation}
with initial condition $s(0)=s^i_l$ and $|\rho(0)|<A_{i,l}/2^j$. Since $\kappa_i(s)$ is upper bounded by $15$ and $|\rho(t)|<1/16$ by the forward invariance of the neighborhood $B_i$, we can write
\begin{equation}\label{eq:fastestflow}
\begin{cases}
\frac{ds}{dt}<16\Lambda_i(s)\leq \frac{K_i}{C}e^{-Cs}\,,\\
\frac{d\rho}{dt}=-\rho(t)\,,
\end{cases}
\end{equation}
where $K_i$ and $C$ are the constants introduced in the statement. Integrating this differential inequality we obtain
\begin{equation}\label{eq:rhofast}
\begin{cases}
s(t)< \frac{1}{C}\ln(K_it+e^{Cs^i_l})\,,\\
\rho(t)= \rho(0)e^{-t}\,.
\end{cases}
\end{equation}
A straightforward computation then shows that
$$t>\frac{e^{Cs(t)}}{K_i}-\frac{e^{Cs^i_l}}{K_i}\,.$$
If we denote by $T_{i,l}(c)$ the time needed for the trajectory to reach the position $s(T_{i,l}(c))=s^i_{l+1}+c$, with $|c|<\varepsilon$, we easily obtain the bound
\begin{align*}
T_{i,l}(c)&=\frac{e^{C(s^i_{l+1}+c)}-e^{Cs^i_l}}{K_i} \\
				&> \frac{e^{C(s^i_l+\frac{1}{2})}-e^{Cs^i_l}}{K_i}\\
				&= \frac{e^{Cs^i_l}(e^{\frac{C}{2}}-1)}{K_i} \\
				&> \frac{e^{Cl}}{K_i}\,,
\end{align*}
where we have used that $s^i_{l+1}-s^i_l+c>1-\varepsilon>1/2$ because the distance between the points $p^i_{l+1}\in\{y=l+1\}$ and $p^i_{l}\in\{y=l\}$ is at least $1$, $s^i_l>l$ by the same reason, and that we will choose a constant $C>2$. This bound immediately yields the following estimate for $\rho(T_{i,l}(c))$:

\begin{equation}\label{eq_rho}
|\rho(T_{i,l}(c))|<\frac{A_{i,l}}{2^j} e^{-\frac{1}{K_i}e^{Cl}}\,.
\end{equation}
Noticing that
$$\frac{A_{i,l}}{A_{i,l+1}}= 15^{2.10^{i+l+2}-2.10^{i+l+1}} \implies A_{i,l}< 15^{2.10^{i+l+2}}A_{i,l+1}\,,$$
and using Equation~\eqref{eq_rho} and the definitions of the constants $K_i$ and $C$, we finally obtain the bound
\begin{align*}
|\rho(T_{i,l}(c))|&<\frac{15^{2\cdot10^{i+l+2}}}{2^j} A_{i,l+1}e^{-\frac{1}{K_i}e^{Cl}}\\
				&= \frac{A_{i,l+1}}{2^j} \frac{1}{15^{10^{i+l+2}}}\\
				&< \frac{A_{i,l+1}}{2^{j+1}},
\end{align*}
for all $i\geq0$, $l\geq0$ and $|c|<\varepsilon$, thus completing the proof of the Proposition.
\end{proof}

Accordingly, in what follows we shall choose functions $f_i=H_i(s)-\frac{\rho^2}{2}$ where $H_i(s)$ is such that $H'_i(s):=\Lambda_i(s) \leq \frac{K_i}{16C}e^{-Cs}$ and bounded.

\begin{Remark}
If $(x(t),y(t))$ denotes the integral curves in Proposition~\ref{prop:contrac} in Cartesian coordinates, its statement is equivalent to:
\[
\Big|x(t)-(\Delta^{l+1}(c_i)+2i)\Big|<\frac{A_{i,l+1}}{2^{j+1}}
\]
provided that $|y(t)-(l+1)|<\varepsilon$.
\end{Remark}

Finally, we can define the function $f\in C^\infty(\mathbb R^2)$ as any smooth extension of the functions $f_i$, that is
\[
f|_{B_i}=f_i
\]
for all $i\geq0$. This smooth extension can be done explicitly taking, for example, a nonnegative $C^\infty$ bump function $\chi_i(\rho)$ on each band $B_i$ such that $\chi_i(\rho)=1$ if $|\rho|<\frac{1}{16}$ and $\chi_i(\rho)=0$ if $|\rho|\geq\frac{1}{15}$. The vector field
\[
X:=\nabla_{\mathbb R^2} f
\]
then satisfies the contraction property stated in Proposition~\ref{prop:contrac} on each band $B_i$. By construction, the vector field $X$ is Turing complete in the sense of Definition~\ref{TC}, but details are provided in the next step (see Theorem~\ref{T:turingpert}).

\begin{Remark}\label{rem:comput}
{Notice that the gradient field $X$ is computable in the following sense. Taking a concrete realization of the functions $\Lambda_i(s)$, e.g. $\frac{K_i}{16C}e^{-Cs}$, it is clear that for any compact set $K$ of the plane the expression of $X$ only depends on the curves $\gamma_i$, which in turn only depend on finitely many steps of the Turing machine (for finitely many inputs). Therefore, there is an algorithm that approximates $X$ in $K$ with arbitrary precision. The computation, however, is not uniform in the sense that one needs more and more computations of the Turing machine as the compact set grows.}
\end{Remark}

\subsection{Step~3: The Turing complete vector field $\nabla_{\mathbb R^2}f$ is weakly robust}\label{SS.wr}

In this last step, we prove that the computational power of the vector field $X$ is weakly robust in the sense that it is preserved by better-than-uniform perturbations. To this end, we first show that any vector field $Y$ that is close enough to $X$ has a collection of integral curves that are very close to $\gamma_i$ in a precise sense:

\begin{prop}\label{thm:PerturbedFlow}
There exists a smooth error function $\epsilon(x,y)$ that tends to zero at infinity fast enough such that for any vector field $Y$ that satisfies the estimate~\eqref{eq_wrob}, its trajectory $(s(t),\rho(t))$ with initial condition $(s(0),\rho(0))=p^i_0$ satisfies that
\begin{equation*}
(s(t),\rho(t)) \in \widetilde B_i
\end{equation*}
and
\[
\frac{ds}{dt}>0
\]
for all $t\geq0$ and $i\geq0$. Here $\widetilde B_i \subset B_i$ is a band such that
$$\widetilde B_i \cap \Big\{|y-l|<\frac{\varepsilon}{2}\Big\} \subset I^i_{\Delta^l (c_i)} \times \Big(l-\frac{\varepsilon}{2},l+\frac{\varepsilon}{2}\Big)$$
for all $l\geq0$.
\end{prop}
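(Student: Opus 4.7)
The plan is to exploit the strong normal contraction of $X$ established in Proposition~\ref{prop:contrac}, combined with an error function $\epsilon$ tuned to the super-exponentially small length scales $A_{i,l}$ of Lemma~\ref{lem:intsize}. All estimates are carried out in the tubular coordinates $(s,\rho)$ on each band $B_i$ used in the proof of Proposition~\ref{prop:contrac}.

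First I define the narrow tube
\[
\widetilde B_i := \{(s,\rho)\in B_i : |\rho| < \eta_i(s)\},
\]
where $\eta_i:\mathbb R\to(0,\rho_0]$ is smooth, equals $A_{i,l}/4$ on each arc-length window $\{|s-s^i_l|<\varepsilon\}$, and is at most the constant $\rho_0$ of Lemma~\ref{L:lb} elsewhere. Because $\gamma_i$ is vertical in an $\varepsilon$-neighborhood of each $p^i_l$ by Lemma~\ref{L:curves}(4), this forces $\widetilde B_i\cap\{|y-l|<\varepsilon/2\}\subset I^i_{\Delta^l(c_i)}\times(l-\varepsilon/2,l+\varepsilon/2)$, as required by the statement. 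Next, I build $\epsilon\in C^\infty(\mathbb R^2)$ via a smooth partition of unity so that on each slab $[2i,2i+1]\times(l-1,l+1)$ one has $\epsilon\le A_{i,l+1}/C_0$, where $C_0$ is a large universal constant fixed below; because $A_{i,l}$ decays super-exponentially in $i+l$, the resulting $\epsilon$ tends to zero at infinity as fast as we please.

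Now let $Y$ be a smooth vector field with $|X-Y|\le\epsilon$ and $\gamma(t)=(x(t),y(t))$ its trajectory through $p^i_0$. Lemma~\ref{L:lb} combined with the smallness of $\epsilon$ gives $\dot y=Y\cdot\partial_y>c_0/2>0$ whenever $\gamma(t)\in\widetilde B_i$, so $\gamma$ is monotone in $y$. Since $\partial_\rho$ is a unit vector for the Euclidean metric, the normal component of the $Y$-equation reduces to the scalar linear ODE
\[
\dot\rho=-\rho+\xi_\rho(t), \qquad |\xi_\rho(t)|\le\epsilon(\gamma(t)),
\]
whose solution satisfies $|\rho(t)|\le e^{-(t-t_0)}|\rho(t_0)|+\sup_{[t_0,t]}|\xi_\rho|$. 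Denote by $t_l$ the first entry time of $\gamma$ into $\{|y-l|<\varepsilon/2\}$; I claim $|\rho(t_l)|\le A_{i,l}/8$ for every $l\ge 0$, by induction. The base case is immediate since $\rho(0)=0$. For the inductive step, lower and upper bounds on $\dot y$ force $t_{l+1}-t_l$ to lie in a fixed interval depending only on $\Lambda$, so by exactly the same computation as in Proposition~\ref{prop:contrac}, the homogeneous term $e^{-(t_{l+1}-t_l)}|\rho(t_l)|$ is bounded by $A_{i,l+1}/16$ whenever $\Lambda<\Lambda_0$. Taking $C_0\ge 16$ gives $\sup|\xi_\rho|\le A_{i,l+1}/16$ on the relevant slab, whence $|\rho(t_{l+1})|\le A_{i,l+1}/8$, closing the induction. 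Throughout the interval $[t_l,t_{l+1}]$ the same ODE bound yields $|\rho(t)|\le A_{i,l}/8+A_{i,l+1}/C_0<\rho_0$, so $\gamma$ never exits $\widetilde B_i$; together with the narrowing of $\eta_i$ at each level this verifies the desired inclusion.

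The main technical obstacle is the simultaneous bookkeeping of two rapidly shrinking scales: the width of $\widetilde B_i$ must match $A_{i,l}$ at each level, and $\epsilon$ must be comparably small on each slab. Crucially, the super-exponential ratio $A_{i,l+1}/A_{i,l}=(3\cdot 5)^{-10}$ combined with the sharp choice of $\Lambda_0$ in Proposition~\ref{prop:contrac} ensures that the geometric contraction absorbs the forcing contribution uniformly in $(i,l)$; once this ratio is exploited, the inductive argument closes without any further analytic input.
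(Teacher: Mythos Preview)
Your argument is essentially correct and follows a route that is genuinely different from the paper's. The paper compares the perturbed and unperturbed trajectories on each compact slab $K^i_l$ via Gronwall's inequality, obtaining $|\widetilde\xi(t)-\xi(t)|\le \frac{\epsilon_K}{M_K}(e^{M_Kt}-1)$, and then feeds the unperturbed trajectory into Proposition~\ref{prop:contrac} to get the normal contraction. You instead work directly with the perturbed normal equation $\dot\rho=-\rho+\xi_\rho$ and use the variation-of-constants bound $|\rho(t)|\le e^{-(t-t_0)}|\rho(t_0)|+\sup|\xi_\rho|$, which is more elementary because it exploits the explicit linear structure of the $\rho$-equation rather than a generic comparison estimate. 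What your approach buys is a cleaner separation into a homogeneous contraction term and a forcing term, so the role of the error function is transparent; what the paper's approach buys is that it does not rely on the special scalar form of the normal equation and would adapt more readily if the unperturbed dynamics were less explicit.

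Two points deserve tightening. First, the phrase ``by exactly the same computation as in Proposition~\ref{prop:contrac}'' is not literally accurate: that proposition bounds the transit time via $\dot s<16\Lambda$ for the \emph{unperturbed} flow, whereas you need a lower bound on $t_{l+1}-t_l$ for the perturbed flow. This is easy to supply (either bound $\dot s<16\Lambda+\epsilon$ directly, or bound $\dot y\le \Lambda+|\rho|+\epsilon$ in the narrow tube and use the unit $y$-displacement), and in fact yields a time bound \emph{better} than $1/(32\Lambda)$, so the conclusion holds with room to spare; but you should state this rather than invoke the proposition verbatim. Second, your $\eta_i$ is only specified on the windows $\{|s-s^i_l|<\varepsilon\}$, and the claim that $\gamma$ never exits $\widetilde B_i$ on $[t_l,t_{l+1}]$ requires knowing $\eta_i$ between levels; the simplest fix is to take $\eta_i$ constant equal to $\rho_0$ outside the windows (your own bound $|\rho(t)|\le A_{i,l}/8+A_{i,l+1}/C_0<\rho_0$ then suffices), together with a one-line bootstrap ensuring the $(s,\rho)$ chart remains valid. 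These are expository gaps rather than mathematical ones.
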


\begin{proof}
Proceeding as in Section~\ref{SS.grad}, we endow each band $B_i\subset\mathbb R^2$ with coordinates $(s,\rho)\in \mathbb R\times (-1/16,1/16)$. For concreteness we take
\[
\Lambda_i(s)=\frac{K_i}{16C}e^{-Cs}
\]
for $s\geq -1$ in the definition of $X_i$. In the set $B_i$ we can write the vector field $Y$ as
\[
Y=X_i+P
\]
where $P$ is a perturbation that is bounded pointwise as $|P(x,y)|\leq \epsilon(x,y)$. In the $(s,\rho)$ coordinates we denote the integral curve $(x(t),y(t))$ of $Y$ as $(s(t),\rho(t))$, and we recall that the point $p^i_0$ corresponds to the initial conditions $s(0)=0$ and $\rho(0)=0$. This trajectory then satisfies the ODE
\begin{equation}\label{eq:perturbedflow}
\begin{cases}
\frac{ds}{dt}=\frac{\Lambda_i(s)}{(1-\kappa_i(s(t))\rho(t))}+P_s(s(t),\rho(t))\,,\\
\frac{d\rho}{dt}=-\rho(t) + P_\rho(s(t),\rho(t))\,,
\end{cases}
\end{equation}
where $P_s,P_\rho$ stand for the components of the vector field $P$ in the $(s,\rho)$ coordinates. It is obvious that if $\|P\|_{C^0(\mathbb R^2)}$ is small enough, the band $B_i$ is forward invariant by the flow of $Y$. Additionally, since $\kappa_i(s)\rho\geq \frac{-15}{16}$ on $B_i$, we obtain
$$\frac{ds}{dt} \geq \frac{16\Lambda_i(s)}{31} + P_s(s(t),\rho(t))\,.$$
We then deduce that
\begin{equation}\label{eq:ds}
\frac{ds}{dt} > \frac{8\Lambda_i(s)}{31}= \frac{K_i}{62C} e^{-Cs}=:\Gamma_i(s)
\end{equation}
for all $t\geq0$, provided that
\begin{equation}\label{eq:firstconstraint}
\text{inf}_{|\rho|<\frac{1}{16}}|P_s(s,\rho)|<\frac{{8}\Lambda_i(s)}{31}
\end{equation}
on each $B_i$ for all $s\in\mathbb R$. This actually holds for all forward trajectories of $Y$ with initial condition in $B_i$. Accordingly, the vector field $Y$ is transverse to the lines $\{s=\text{constant}\}$ in each band $B_i$.

Let us now compare the evolution of the trajectories of $X$ and $Y$ on a compact set $K$. For this, we take integral curves $\widetilde \xi(t)$ and $\xi(t)$ of the vector fields $Y$ and $X$, respectively, with the same initial condition in $K$. A standard computation shows that
\begin{equation*}
\frac{d}{dt}\Big|\widetilde\xi-\xi\Big|\leq \Big|X(\widetilde\xi)-X(\xi)\Big|+\epsilon_K\,,
\end{equation*}
where we have set
\[
\epsilon_K:=\|P\|_{C^0(K)}\,.
\]
Applying the mean value theorem with
$$M_K:=\|DX\|_{C^0(K)}\,,$$
we obtain
\[
\frac{d}{dt}\Big|\widetilde\xi-\xi\Big|\leq M_K\Big|\widetilde\xi-\xi\Big|+\epsilon_K\,,
\]
and hence we can conclude, using Gronwall's inequality, that
\begin{equation}\label{eq:gron}
\Big|\widetilde\xi(t)-\xi(t)\Big|\leq \frac{\epsilon_K}{M_K}\Big(e^{M_Kt}-1\Big)\,,
\end{equation}
for all $t\geq0$ such that $\widetilde\xi(t)$ and $\xi(t)$ are contained in $K$.

We are now ready to apply this estimate to the trajectory $(s(t),\rho(t))$ in the statement; in order to compare with the unperturbed field $X$, we use the notation $(S(t),R(t))$ for the corresponding integral curve of $X$ with the same initial condition as $(s(t),\rho(t))$. Let $T^i_1(c)$ be the time for which $s(T^i_1(c))=s^i_1+c$, with $|c|<\varepsilon/2$. Let us now compute an upper bound of $T^i_1(c)$. From Equation~\eqref{eq:ds} it follows that $s(t)> \frac{1}{C} \ln (\frac{K_it}{62}+e^{Cs(0)})$ and hence $t<\frac{62}{K_i}(e^{Cs(t)}-e^{Cs(0)})$. We then infer that
\[
T^i_1(c) < \frac{62}{K_i}(e^{C(s^i_1+c)}-1)< \frac{62}{K_i}(e^{3C}-1)<\frac{62000}{K_i}=:\tau^i_1\,,
\]
where we used the simple estimate $s^i_1+c<3$ and that $e^{3C}-1=e^{3\ln10}-1<1000$. Notice that the constant $\tau^i_1$ only depends on the band $B_i$.

Next we take a compact set given by the box
$$K^i_0:=[2i,2i+1]\times [-1/4,1+1/4]\,.$$
Accordingly, if we assume that
\[
\epsilon_{K^i_0}<\epsilon^i_{0}:=\frac{M_{K^i_0}}{e^{M_{K^i_0}\tau^i_1}-1}\,\text{min}\bigg\{\frac{\varepsilon}{2},\frac{A_{i,1}}{4}\bigg\}
\]
we conclude from Equation~\eqref{eq:gron} that
$$
|S(T^i_1(c))-s^i_1|<\varepsilon
$$
for all $|c|<\varepsilon/2$, and also
\[
|\rho(T^i_1(c))|<\frac{A_{i,1}}{4}\,,
\]
where we have used that $R(t)=0$ for all $t\geq0$ ($X$ is tangent to the curves $\gamma_i$). In fact, it also holds that
\[
\rho(t)<\frac{A_{i,1}}{4}<\frac{A_{i,0}}{4}
\]
for $t\in[0,T^i_1(\varepsilon)]$.
In summary, we have established that for all $i\geq0$, and $l=0,1$, the intersection of the curve $(x(t),y(t))$ with the set $\{|y-l|<\frac{\varepsilon}{2}\}$ is contained in the box
\begin{equation*}
\Big\{|x-(\Delta^l(c_i)+2i)|<\frac{A_{i,l}}{4}\Big\}\times \Big\{|y-l|<\frac{\varepsilon}{2}\Big\}\subset I^i_{\Delta^l(c_i)}\times \Big(l-\frac{\varepsilon}{2},l+\frac{\varepsilon}{2}\Big)\,,
\end{equation*}
where the inclusion follows from the estimate in Lemma~\ref{lem:intsize}.

Now we claim that for each $i\geq0$ and $l\geq0$ the intersection of the curve $(x(t),y(t))$ with the set $\{|y-l|<\frac{\varepsilon}{2}\}$ is contained in the box
\begin{equation*}
\Big\{|x-(\Delta^{l}(c_i)+2i)|<\frac{A_{i,l}}{4}\Big\}\times \Big\{|y-l|<\frac{\varepsilon}{2}\Big\}\subset I^i_{\Delta^{l}(c_i)}\times \Big(l-\frac{\varepsilon}{2},l+\frac{\varepsilon}{2}\Big)\,,
\end{equation*}
for a suitable choice of the error function $\epsilon(x,y)$.

Since we have proved this result for $l\in\{0,1\}$, it is clear that the claim follows if we establish the following induction property: consider an integral curve of $Y$, which we still denote by $(s(t),\rho(t))$, with initial condition
\begin{equation*}
s(0)=s^i_l\,, \qquad |\rho(0)|<\frac{A_{i,l}}{4}
\end{equation*}
for some $i\geq0$ and $l\geq1$, then, if $T^i_{l+1}(c)$ denotes the time such that $s(T^i_{l+1}(c))=s^i_{l+1}+c$, with $|c|<\varepsilon/2$, we have the estimate
\begin{equation} \label{eq:mainestimate}
|\rho(T^i_{l+1}(c))|<\frac{A_{i,l+1}}{4}\,.
\end{equation}

To check this, we denote, as before, $(S(t),R(t))$ the trajectory of the vector field $X$ with the same initial condition $(s(0),\rho(0))$; this will allow us to exploit the contraction properties of the flow of $X$. As in the case $l=0$, it is clear that $T^i_{l+1}(c)$ is bounded by
\[
T^i_{l+1}(c)<\frac{62}{K_i}(e^{C(s^i_{l+1}+c)}-e^{Cs^i_l})< \frac{62}{K_i}e^{C(2l+3)}=:\tau^i_{l+1}\,,
\]
where we used that $|s^i_l|<2l$ and $|c|<1$, a bound that depends both on $i$ and $l$.

\begin{figure}[!h]
\begin{center}
\begin{tikzpicture}
     \node[anchor=south west,inner sep=0] at (0,0) {\includegraphics[scale=0.15]{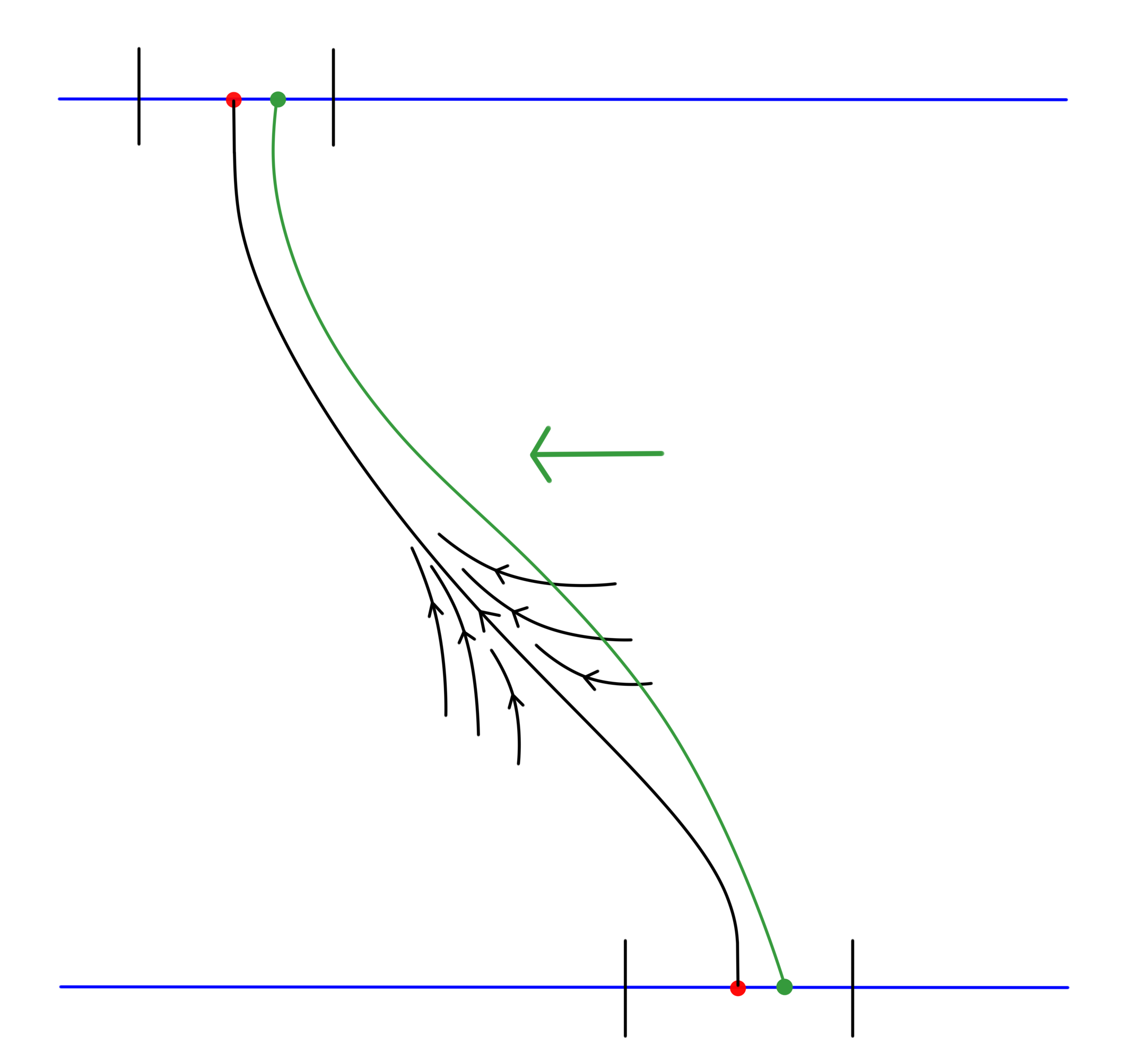}};

     \node[color=black!50!green] at (6.2,2.5) {$(s(t),\rho(t))$};

     \node at (1.5, 5.5) {${\gamma_i}$};

     \node at (2.7,3.2) {$X$};

     \node [scale=0.9] at (5.7, -0.1) {$I^i_{\Delta^l(c_i)}$};

     \node [scale=0.9] at (1.8, 8) {$I^i_{\Delta^{l+1}(c_i)}$};

     \node [blue, scale=0.8] at (8, 7.7) {$y=l+1$};
	\node [blue, scale=0.8] at (8, 1.05) {$y=l$};
\end{tikzpicture}
\caption{The contraction in the normal direction controls the perturbed flow-line}
\label{fig:contraction}
\end{center}
\end{figure}

To apply the estimate~\eqref{eq:gron}, we take the compact set
$$K^i_l:=[2i,2i+1]\times [l-1/4,l+1+1/4]\,,$$
and the small constant
\begin{equation*}
\epsilon_{K^i_l}:=\|P\|_{C^0(K^i_l)}<\epsilon^i_{l}:=\frac{M_{K^i_l}}{e^{M_{K^i_l}\tau^i_{l+1}}-1}\,\text{min}\bigg\{\frac{\varepsilon}{2},\frac{A_{i,l+1}}{8}\bigg\}\,.
\end{equation*}

It then follows from~\eqref{eq:gron} that $S(T^i_{l+1}(c))$ is bounded as
\[
|S(T^i_{l+1}(c))-s^i_{l+1}|<\varepsilon
\]
for all $|c|<\varepsilon/2$. Therefore, Proposition~\ref{prop:contrac} yields that
\[
|R(T^i_{l+1}(c))|<\frac{A_{i,l+1}}{8}\,,
\]
which implies in turn, applying again Equation~\eqref{eq:gron}, the desired bound
\[
|\rho(T^i_{l+1}(c))|<\frac{A_{i,l+1}}{4}\,,
\]
for all $|c|<\varepsilon/2$.

In summary, we have proved that if we take an error function $\epsilon(x,y):=|P(x,y)|$ satisfying the condition~\eqref{eq:firstconstraint} and such that
\[
\|\epsilon\|_{C^0(K^i_l)}<\epsilon_0\epsilon^i_{_l}
\]
for all $l\geq0$ and $i\geq0$, for some small enough constant $\epsilon_0>0$, then the integral curve $(s(t),\rho(t))$ of the perturbed field $Y$ is contained in a band $\widetilde B_i\subset B_i$ whose intersection with the set $\{|y-l|<\varepsilon/2\}$ is contained in the box
$$I^i_{\Delta^l(c_i)}\times (l-\varepsilon/2,l+\varepsilon/2)\,.$$
It is evident that such a smooth function $\epsilon(x,y)$ exists, which completes the proof of the proposition.
\end{proof}

\begin{Remark}\label{R:error2}
A simple analysis of the vector fields $X_i$ and a suitable extension to a global field $X$, allow us to conclude that the constant $$M_{K^i_l}:=\|DX\|_{C^0(K^i_l)}$$
can be taken to be uniform in $i$ and $l$. Since the asymptotic behavior of the constants $\tau^i_l$, $K_i$ and $A^i_l$ is
\begin{align*}
\tau^i_l\approx c_1e^{c_2(i+l)}\,,\qquad K_i\approx c_3e^{-c_4i}\,, \qquad A^i_l\approx c_5e^{-e^{c_6(i+l)}}
\end{align*}
for some positive constants $c_j$ (independent of $i$ and $l$), it is not hard to obtain a bound for the decay of the error function $\epsilon(x,y)$, which turns out to be a double exponential:
\begin{equation*}
|\epsilon(x,y)|<ce^{-e^{cr}}
\end{equation*}
for all $(x,y)\in\mathbb R^2$, with $r:=(x^2+y^2)^{1/2}$ and some $c>0$.
\end{Remark}

We can now prove that any suitably small perturbation of $X$ is Turing complete, which implies the main result of this section, cf. Theorem~\ref{th.turingrob}.

\begin{theorem}\label{T:turingpert}
Let $\epsilon\in C^\infty(\mathbb R^2)$ be the error function obtained in Proposition~\ref{thm:PerturbedFlow}. Then any smooth vector field $Y$ that is bounded pointwise as
$$ |X(x,y)-Y(x,y)| \leq \epsilon(x,y)\,,$$
is Turing complete.
\end{theorem}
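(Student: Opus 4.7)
The plan is to deduce Theorem~\ref{T:turingpert} directly from Proposition~\ref{thm:PerturbedFlow}, which already encodes the step-by-step evolution of the universal Turing machine into the trajectory of $Y$ emanating from the explicit point $p^i_0$. With that proposition in hand, verifying Definition~\ref{TC} for $Y$ reduces to explicit bookkeeping about which encoding boxes this trajectory visits.

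Given a Turing machine, an input tape, and a finite string $t^*=(t^*_{-k},\dots,t^*_k)$, I would first invoke universality to reduce the problem to simulating the universal machine $T$ fixed in Section~\ref{S:TCgrad} running on a computable recoding of the original data. The resulting initial configuration $(q_0,t)$ corresponds via the encoding $\varphi$ of Section~\ref{SS.encoding} to some $c_i\in[0,1]$, and the explicitly constructible point would be
\[
p:=p^i_0=(c_i+2i,\,0)\in\mathbb R^2.
\]
Letting $H$ denote the set of halting configurations $(q_{halt},t')$ whose tape coincides with $t^*$ on positions $-k,\dots,k$ (after undoing the recoding), I would then take the target open set
\[
U:=\bigcup_{(q_{halt},t')\in H}U_{(q_{halt},t')}
\]
with the planar encoding of Definition~\ref{D:ep}. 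By construction $U$ is an explicit countable union of pairwise disjoint open rectangles, each supported in a horizontal strip $\{|y-l|<\varepsilon/2\}$ with integer $l\ge 0$.

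Next, Proposition~\ref{thm:PerturbedFlow} applies because $|X-Y|\le\epsilon$ by hypothesis. It yields that the trajectory $(x(t),y(t))$ of $Y$ starting at $p^i_0$ stays forever in the band $\widetilde B_i\subset[2i,2i+1]\times\mathbb R$, has $dy/dt$ bounded below by a positive constant (so $y(t)\to\infty$ and every integer height is actually reached), and for each $l\ge 0$ its intersection with $\{|y-l|<\varepsilon/2\}$ is contained in $I^i_{\Delta^l(c_i)}\times(l-\varepsilon/2,l+\varepsilon/2)$. Since the intervals $I_{(q,t)}$ are pairwise disjoint in $[0,1]$ by construction, a point of the strip $\{|y-l|<\varepsilon/2\}$ belongs to $U$ if and only if $\Delta^l(c_i)\in H$; outside these strips $U$ is empty, so the orbit cannot intersect $U$ while transiting between consecutive integer heights.

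Combining the previous items, the orbit of $Y$ through $p$ meets $U$ if and only if there exists some $l\ge 0$ with $\Delta^l(c_i)\in H$, which by the convention $\Delta(q_{halt},\cdot)=(q_{halt},\cdot)$ is equivalent to the original Turing machine halting on the original input with output matching $t^*$ on positions $-k,\dots,k$. This is exactly the reachability condition in Definition~\ref{TC}, so $Y$ is Turing complete. No serious obstacle remains for this final step: all the analytic work is bundled into Proposition~\ref{thm:PerturbedFlow}, and the only points that warrant a careful check are the pairwise disjointness of the boxes $I^i_{\cdot}$ within the fixed band $\widetilde B_i$, the fact that $U$ only lives near integer heights so that inter-level transits cannot spoil the reachability question, and the uniformity of the lower bound on $dy/dt$ needed to guarantee that every integer height is traversed.
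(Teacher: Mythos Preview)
Your proposal is correct and follows essentially the same approach as the paper: you pick the explicit point $p^i_0$, define the target open set as the union of the encoding boxes of the relevant halting configurations, and then use Proposition~\ref{thm:PerturbedFlow} together with the pairwise disjointness of the intervals $I_{(q,t)}$ to match the reachability of $U$ with the halting condition. The only cosmetic differences are that the paper takes $U^i_{t^*}$ supported in the single band $[2i,2i+1]\times\mathbb R$ (whereas your $U$, built from Definition~\ref{D:ep}, spans all bands, which is harmless since the trajectory is trapped in $\widetilde B_i$), and the paper works directly with the fixed universal machine rather than passing through an explicit recoding step.
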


\begin{proof}
Let $T$ be a universal Turing machine and $(q_0,t_{in})$, $t_{in}\in A$, an input configuration. It is clear that there is an algorithmic procedure (requiring a finite number of steps) that determines the position of the input $(q_0,t_{in})$ with respect to the order in $C_0$ {introduced in Section \ref{SS.curves}} and its value $c_i\in[0,1]$. Hence the input configurations correspond to the {explicitly computable} points $p^i_0=(c_i+2i,0)\in\mathbb R^2$.

Given an output string $t^*=(t^*_{-k},...,t^*_k) \in \Sigma^{2k+1}$, let $J^i_{t^*}$ denote the union of all the intervals $I_{(q,t)}^i$ for each configuration $(q,t)$ such that $q=q_{halt}$ and the symbols in position $-k,...,k$ of $t$ coincide with $t^*_{-k},...,t_k^*$. It is convenient to define the open set
\begin{equation}\label{eq:TCsets}
 U_{t^*}^{i}= \bigcup_{l=1}^\infty J^i_{t^*} \times (l-\varepsilon/2,l+\varepsilon/2) \subset [2i,2i+1]\times \mathbb{R}\,
\end{equation}
where $\varepsilon>0$ is the small constant that was fixed before. Notice that this is consistent with the assignment introduced in Definition~\ref{D:ep}. As usual, we say that an output $t\in A$ coincides with $t^*$ if the positions $-k,...,k$ of $t$ coincide with $t^*_{-k},...,t_k^*$.

To prove that the vector field $Y$ is Turing complete, cf. Definition~\ref{TC}, first assume that $T$ halts with input $t_{in}$ after $k$ steps with configuration $\Delta^k(c_i)=(q_{halt},t)$ and $t$ coincides with $t^*$. Let $(x(t),y(t))$ be the integral curve of $Y$ with initial condition $p^i_0$. By Proposition~\ref{thm:PerturbedFlow}, if $t_k$ denotes the time such that $y(t_k)=k$, it follows that
$$ x(t_k) \in I_{\Delta^k(c_i)}^i\,. $$
But $\Delta^k(c_i)$ is a halting configuration whose tape coincides with $t^*$, hence
$$I_{\Delta^k(c_i)}^i \subset J^i_{t^*}\,,$$
and therefore the trajectory intersects the open set $U_{t^*}^{i}$.

Conversely, assume that the trajectory of $Y$ with initial condition $p^i_0$ intersects the open set $U_{t^*}^{i}$. Let $l>0$ be the smallest integer such that the orbit intersects $J^i_{t^*}\times (l-\varepsilon/2,l+\varepsilon/2)$. By the definition of $J^i_{t^*}$, there is a configuration $(q_{halt},t)$, $t$ coinciding with $t^*$, such that the orbit intersects the box $I^i_{(q_{halt},t)} \times (l-\varepsilon/2,l+\varepsilon/2)$. By Proposition~\ref{thm:PerturbedFlow}, and the fact that all the intervals $I^i_{(q,t)}$ are disjoint, it follows that
$$I^i_{(q_{halt},t)}=I^i_{\Delta^l(c_i)}\,.$$
We then conclude that $\Delta^l(c_i)=(q_{halt},t)$, proving that $T$ halts with an output $t$ that coincides with $t^*$ after $l$ steps of the algorithm.
\end{proof}

\begin{Remark}
As mentioned in Section~\ref{SS:basic}, a consequence of the Turing completeness of $Y$ is that it exhibits undecidable trajectories. The open sets for which the reachability problem is undecidable are of the form $U^i_{t^*}$. It is easy to construct even simpler open sets for which the reachability problem is undecidable. Following Remark \ref{rem:haltdist}, we could have encoded every halting configuration in intervals that are at a positive distance of the non-halting configurations. In such a case, it is easy to check that the machine with input $c_i$ will halt if and only if the trajectory of the vector field with initial point $p^i_0$ intersects the half-plane $U^i_{halt}:=\{x> 2i+1-\frac{1}{2^{q_{halt}}} \}$.
\end{Remark}


\section{Proof of Theorem~\ref{th.main}}\label{S:main}

Let $f\in C^\infty(\mathbb R^2)$ be the function constructed in Theorem~\ref{th.turingrob} and $\epsilon(x,y)$ the error function that bounds the weak robustness of $\nabla_{\mathbb R^2}f$. Applying the better-than-uniform approximation theorem in~\cite{FG} we infer that there exists an entire function $F$ on $\mathbb R^2$ such that
\[
\text{max}_{|\alpha|\leq 1}\Big|D^\alpha(f(x,y)-F(x,y))\Big|\leq \epsilon(x,y)
\]
for all $(x,y)\in\mathbb R^2$. By Theorem~\ref{th.turingrob}, we then deduce that the vector field $\nabla_{\mathbb R^2} F$ is Turing complete as well.

Next, we invoke the Cauchy-Kovalevskaya Theorem~\ref{thm:main1} to conclude that, for any constant $\lambda\neq0$, there exists a Beltrami field $u$ on $\mathbb R^3$ satisfying $\curl u=\lambda u$ such that
\[
u(x,y,0)=\frac{\partial F}{\partial x}\partial_x+\frac{\partial F}{\partial y}\partial_y\,.
\]
It is clear that $u$ inherits the Turing completeness of the gradient field $\nabla_{\mathbb R^2} F$. Specifically, using the notation introduced in the proof of Theorem~\ref{T:turingpert}, the points and open sets associated with the initial configurations and halting states in Definition~\ref{TC} are given by
\begin{align}\label{eqpoints}
(c_i+2i,0,0)\in\mathbb R^3\\
U^i_{t^*}\times \Big(-\frac{\varepsilon}{2},\frac{\varepsilon}{2}\Big)\subset\mathbb R^3 \label{eq:sets}
\end{align}
for all $i\geq0$. Since the plane $\{z=0\}$ is invariant by the flow of $u$, it is straightforward to check that it is Turing complete. The theorem then follows.

\begin{Remark}\label{rem:comput2}
{Although the Beltrami field $u$ is obtained from $\nabla_{\mathbb R^2}F$ using the Cauchy Kovalevskaya theorem, the trajectories associated with the computations of the universal Turing machine simulated by $u$ only depend on $F$, and can be computed up to an arbitrarily small error according to Remark~\ref{rem:comput}. The tape size and the number of steps of this computation depend on the values $|x(t)|$ and $|y(t)|$, respectively, of the integral curves of $\nabla_{\mathbb R^2}F$.}
\end{Remark}

As a consequence of the proof we infer the existence of an analytic function on $\mathbb R^2$ (in fact, entire in the sense that it can be extended to a holomorphic function on $\mathbb C^2$) whose gradient is Turing complete:

\begin{Corollary}\label{Cor:TCan}
There exists an entire function $F:\mathbb R^2\to\mathbb R$ such that its gradient $\nabla_{\mathbb R^2} F $ is Turing complete.
\end{Corollary}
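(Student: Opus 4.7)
The plan is to extract the corollary directly from the planar portion of the construction that underlies Theorem~\ref{th.main}. The proof of that theorem proceeds in two stages: first one produces an entire function $F$ on $\mathbb R^2$ whose gradient is Turing complete, and then one extends this gradient to a Beltrami field on $\mathbb R^3$ via the global Cauchy--Kovalevskaya Theorem~\ref{thm:main1}. The corollary is nothing other than the output of the first stage, so I would isolate it as a standalone statement.

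Concretely, I would begin with the smooth potential $f\in C^\infty(\mathbb R^2)$ produced by Theorem~\ref{th.turingrob}, together with the error function $\epsilon(x,y)$ that controls the weak robustness of $\nabla_{\mathbb R^2} f$. Then I would invoke the better-than-uniform entire approximation theorem from~\cite{FG} to obtain an entire function $F:\mathbb R^2\to\mathbb R$ satisfying the $C^1$ estimate
\[
\max_{|\alpha|\leq 1}\bigl|D^\alpha\bigl(f(x,y)-F(x,y)\bigr)\bigr|\leq \epsilon(x,y) \qquad \text{for all } (x,y)\in\mathbb R^2.
\]
In particular, $\nabla_{\mathbb R^2} F$ differs from $\nabla_{\mathbb R^2} f$ pointwise by at most $\epsilon(x,y)$, which is precisely the hypothesis of the weak robustness statement.

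The weak robustness of Theorem~\ref{th.turingrob}, made quantitative in Theorem~\ref{T:turingpert}, then immediately yields that $\nabla_{\mathbb R^2} F$ is a Turing complete vector field on the plane, which is what the corollary asserts. The only nontrivial point to verify is that the approximation theorem of~\cite{FG} does indeed provide a simultaneous $C^1$ approximation by an entire function with the \emph{prescribed} strictly positive error $\epsilon(x,y)$; this is exactly what that theorem delivers (and it has already been used in the opening of the proof of Theorem~\ref{th.main}), so no additional obstacle arises. The whole argument therefore reduces to a single paragraph citing Theorem~\ref{th.turingrob}, \cite{FG}, and Theorem~\ref{T:turingpert}.
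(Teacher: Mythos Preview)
Your proposal is correct and follows exactly the paper's own route: the corollary is stated immediately after the proof of Theorem~\ref{th.main} as a direct consequence of its first paragraph, which combines Theorem~\ref{th.turingrob}, the entire approximation theorem of~\cite{FG}, and the weak robustness (Theorem~\ref{T:turingpert}) precisely as you describe. There is nothing to add.
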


\begin{Remark}\label{rem:settoset}
Observe that the step-by-step evolution of a universal Turing machine is embedded into the planar gradient field $\nabla_{\mathbb R^2} F$. An input $c_i$ is associated with the point $p^i_0$, and the evolution of the flow intersects the open sets introduced in Equation~\eqref{eq:encod} following the computational process of the machine. However, we could have taken any initial point in the neighborhood
$$\frac14I^i_{c_i}\times \Big(\frac{-\epsilon}{2},\frac{\epsilon}{2}\Big)$$
of $p^i_0$, where the interval $I^i_{c_i}$ was introduced in Section~\ref{SS.curves}. Indeed, the evolution of the trajectory through a point in that neighborhood will also intersect the open sets~\eqref{eq:encod} following the steps of the machine with input $c_i$. This follows from the proof of the weak robustness of the computational power of the field, since the estimate~\eqref{eq:mainestimate} also holds for $l=0$. In particular, the Turing completeness of this gradient field can be stated in terms of a ``set-to-set'' property (see~\cite{DKB}), where instead of assigning a point to an input we accept any point in a neighborhood of $p^i_0$. In our construction, however, the interval $I^i_{c_i}$ is not uniform, and it becomes arbitrarily small for configurations whose tape $t$ has large support.
\end{Remark}

\section{Robust computational complexity}\label{S:bounded}

It is well known that Euclidean Beltrami fields do not have finite energy~\cite{Na14}. In fact, the Turing complete Beltrami field $u$ constructed in Theorem~\ref{th.main} is not granted to be even bounded. For this reason, it is customary to consider Beltrami fields on the $3$-torus endowed with the canonical flat metric. In this section, we explore the computational properties of Beltrami fields on the $3$-torus, showing that they can simulate an arbitrary tape-bounded Turing machine. We also provide a quantitative estimate relating the size of the tape with the energy of the field and compare it with the space-bounded Church-Turing thesis in the theory of computation~\cite{BSR}. These considerations, together with the recently developed theory of Gaussian random Beltrami fields~\cite{EPR}, allow us to prove that, with probability~$1$, a random Beltrami field in $\mathbb R^3$ is Turing complete (in a weaker sense to be specified later), thus establishing that the computational complexity of a Beltrami field is a typical phenomenon.

\subsection{Robust simulation of tape-bounded Turing machines}\label{SS:torus}

A Beltrami field on the flat torus $\mathbb T^3:=(\mathbb R/2\pi\mathbb Z)^3$
is an eigenfield of the curl operator, i.e.,
\[
\curl v= \lambda v
\]
for some eigenvalue $\lambda$. The spectrum of
the curl operator on the $3$-torus consists of the numbers of the form
$\lambda=\pm|k|$ for some vector with integer coefficients $k\in\mathbb
Z^3$.

The proof of Theorem~\ref{th.main} makes crucial use of the non-compactness of $\mathbb R^3$. Indeed, to apply the Cauchy-Kovalevskaya Theorem~\ref{thm:main1} we need to construct an entire function on $\mathbb R^2$ whose gradient is Turing complete. The non-compactness of $\mathbb R^2$ makes possible the construction of a smooth gradient field whose computational power is weakly robust under better-than-uniform approximations, cf. Theorem~\ref{th.turingrob}, which allows us to obtain an entire field by perturbation. This perturbative strategy breaks down when one considers smooth Turing complete vector fields on a compact surface: while these fields exist, see Section~\ref{S:final}, their Turing completeness is lost when  approximated  by analytic fields.\\

At this moment we do not know how to construct a Turing complete Beltrami field on $\mathbb T^3$. Instead, we can simulate a weaker notion of computability that consists in Turing machines whose tape size is bounded. We take here a concrete definition of bounded machines that is suited to our construction. This definition slightly differs from the usual definition of space-bounded Turing machines\footnote{{Here by a space-bounded Turing machine we mean a machine with a finite number of possible configurations.}}, but they enjoy the same computational power. A \emph{tape-bounded Turing machine} $T_b$ is given by a usual Turing machine $T$ together with an allowed size of the tape, i.e., two integer numbers $b_-$ and $b_+$, with $b_-\leq 0<b_+$, such that the only positions of the tape that are allowed are $b_{-},...,b_+$. In particular, the space of configurations is of the form $Q\times \Sigma^{N}$ for some natural number $N$. For a given input, the machine $T_b$ either halts to give an output (without leaving the allowed size of the tape), enters a periodic orbit of configurations or attempts to reach a position of the tape that is not allowed (hence we say that it ``runs out of memory").

To simulate tape-bounded Turing machines in a compact space, we use a deep property of Beltrami fields on $\mathbb T^3$ first shown in~\cite{EPT} and known as ``the inverse localization property''. Roughly speaking, it says that any Beltrami field on a compact set of $\mathbb R^3$ can be reproduced in a ball of size $\lambda^{-1}$ by a Beltrami field on $\mathbb T^3$ with high enough eigenvalue $\lambda$. For the precise statement let us fix an arbitrary point $p_0\in \mathbb T^3$ and take a patch of normal geodesic coordinates $\Psi:\mathbb B\to B$ centered at $p_0$. Here $B$ (resp.~$\mathbb B$) denotes the unit
ball in~$\mathbb R^3$ (resp.~the geodesic unit ball in~$\mathbb T^3$) centered at the origin (resp.\ at $p_0$). The following result was proved in~\cite{EPT}:
\begin{theorem}\label{th-il}
Let $u$ be a Beltrami field in $\mathbb R^3$, satisfying
$\curl u=u$, $B_N\subset \mathbb R^3$ the ball centered at the origin of radius $N$, and fix any positive real $\delta$ and integer $m$. There is a positive integer $L_0$ such that for any odd integer $L>L_0$ there exists a
Beltrami field~$v_L$ on $\mathbb T^3$, satisfying $\curl v_L=L v_L$, such that
\begin{equation*}\label{nose2}
\bigg\|u-v_L\circ \Psi^{-1}\bigg(\frac{\cdot}{L}\bigg)\bigg\|_{C^m(B_N)}<\delta\,.
\end{equation*}
\end{theorem}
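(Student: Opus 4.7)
The plan is to realize $v_L$ as a trigonometric polynomial built from curl-eigenmodes on $\mathbb T^3$ with eigenvalue $L$, chosen so that after pullback via $\Psi$ and rescaling by $1/L$ it reproduces a Herglotz-type decomposition of $u$ on $B_N$. Since the target estimate is in $C^m(B_N)$, everything reduces to approximating $u$ by a finite superposition of plane waves whose wavevectors can be taken to be integer vectors of length $L$.

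First, approximate $u$ by plane-wave Beltrami modes on $\mathbb R^3$. Because $\curl u=u$ together with $\dive u=0$ forces $\Delta u=-u$ componentwise, each component is an entire monochromatic wave of wavenumber~$1$, and standard density results for the Helmholtz equation (Herglotz-type expansions, cf.\ Colton--Kress) produce a finite sum
$$u_\star(x)=\sum_{j=1}^M a_j\, e^{i\omega_j\cdot x},\qquad \omega_j\in S^2,\ a_j\in\mathbb C^3,$$
with $i\omega_j\times a_j=a_j$ (so each summand is itself a Beltrami field of eigenvalue $1$) and $\|u-u_\star\|_{C^m(B_N)}<\delta/2$. Next, replace each unit direction $\omega_j$ by $k_j/L$ for some $k_j\in\mathbb Z^3$ with $|k_j|=L$ chosen close to $\omega_j$, and define
$$v_L(y):=\sum_{j=1}^M a_j^{(L)}\, e^{ik_j\cdot y},$$
where $a_j^{(L)}$ is a small linear correction of $a_j$ enforcing $ik_j\times a_j^{(L)}=La_j^{(L)}$; mode by mode this gives $\curl v_L=Lv_L$ on $\mathbb T^3$. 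Composing with $\Psi^{-1}$, which in the flat chart is a translation, and rescaling by $1/L$, one compares $v_L\circ \Psi^{-1}(x/L)$ to $u_\star(x)$ on $B_N$ by elementary Taylor estimates governed by $\max_j|k_j/L-\omega_j|$ and by $N/L$; taking $L$ large absorbs these into the remaining $\delta/2$ budget.

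The main obstacle, and the source of the oddness hypothesis on $L$, is the existence and equidistribution of integer lattice points on the sphere of radius $L$. Restricting to odd $L$ sidesteps the congruence obstructions that kill three-square representations when $L^2\equiv 0,4,7\pmod 8$, and Linnik's theorem, sharpened by Duke and Iwaniec via half-integer weight modular forms, then guarantees that $\{k/L:k\in\mathbb Z^3,\ |k|=L\}$ equidistributes on $S^2$ as $L\to\infty$ along odd integers. This is the number-theoretic heart of the argument; once in hand, the required approximants $k_j/L$ to each prescribed $\omega_j$ exist for every sufficiently large odd $L$, and the final $C^m(B_N)$ estimate follows by combining the Herglotz truncation error, the lattice approximation error, and the negligible chart distortion from $\Psi$.
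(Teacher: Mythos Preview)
The paper does not itself prove this statement; it is quoted verbatim from \cite{EPT} and used as a black box. Your sketch is essentially the strategy of that reference: represent the Euclidean Beltrami field as a Herglotz superposition of plane waves with frequencies on $S^2$, truncate to a finite sum of curl-eigenmodes, and then replace each unit frequency $\omega_j$ by a rescaled lattice point $k_j/L$ with $|k_j|=L$, invoking the equidistribution of lattice points on large spheres (Linnik, made unconditional by Duke--Iwaniec) to guarantee good approximants. The oddness of $L$ is precisely what forces $L^2\equiv 1\pmod 8$, avoiding the Legendre obstruction and feeding into the equidistribution input. The correction of the amplitudes $a_j\mapsto a_j^{(L)}$ to restore the eigenvector condition $ik_j\times a_j^{(L)}=L a_j^{(L)}$, and the observation that on the flat torus $\Psi$ is a translation so the chart introduces no distortion, are also handled as in the original. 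In short, your proposal is correct and coincides with the proof in \cite{EPT}; there is nothing in the present paper to compare it against.
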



We are now ready to prove that any tape-bounded Turing machine can be simulated by a Beltrami field on $\mathbb T^3$ in the following sense: a vector field $v$ on $\mathbb T^3$ simulates the tape-bounded Turing machine $T_b$ if there exists a compact set $K\subset \mathbb T^3$ such that for any integer $k\leq \frac{s(T_b)-1}{2}$ ($s(T_b)$ is the tape size of $T_b$), an input tape $t$, and a finite string $(t_{-k}^*,...,t_k^*)$ of symbols of the alphabet, there exist an explicitly constructible point $p\in K$ and an open set $U\subset K$ such that the orbit of $v|_K$ through $p$ intersects $U$ if and only if $T_b$ halts (without running out of memory) with an output tape whose positions $-k,...,k$ correspond to the symbols $t_{-k}^*,...,t_k^*$. This definition is weaker than Definition~\ref{TC} in the sense that we only consider the piece of the integral curve of $v$ starting at $p$ that is contained in $K$; with the usual concept of simulation we can safely take $K=\mathbb T^3$, but we cannot do this here because we lose the control of the dynamics of $v$ outside $K$ (the integral curves could eventually come back to $K$). Notice that a tape-bounded Turing machine may also halt if it runs out of memory, but this is a ``fake'' halting, so it is not associated with the intersection of an open set with the orbits of the vector field that simulates the machine.

\begin{theorem}\label{coro:bounded}
Let $T_b$ be a tape-bounded Turing machine. Then there exists a Beltrami field $v$ on $\mathbb T^3$ that robustly simulates $T_b$.
\end{theorem}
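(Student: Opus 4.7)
The strategy is to combine the Turing complete Beltrami field $u$ on $\mathbb R^3$ from Theorem~\ref{th.main} with the inverse localization Theorem~\ref{th-il}: tape-boundedness of $T_b$ confines the relevant dynamics of $u$ to a compact Euclidean ball on a bounded time interval, after which a suitably rescaled high-frequency Beltrami field on $\mathbb T^3$ reproduces $u$ closely enough on that window to inherit the simulation.

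First I would extract a compact dynamical window for $T_b$. Since $T_b$ has at most $N:=|Q|\cdot|\Sigma|^{s(T_b)}$ configurations, there are only finitely many admissible inputs $c_i$, and for each input the algorithm halts or enters a cycle within $N$ steps. In the encoding of Sections~\ref{SS.encoding}--\ref{SS.curves}, the corresponding initial points $p^i_0=(c_i+2i,0,0)$ lie in a bounded portion of $\{z=0\}$, and detection of halting with output $t^*$ happens through the open sets $U^i_{t^*}\times(-\varepsilon/2,\varepsilon/2)$ introduced in Equations~\eqref{eqpoints}--\eqref{eq:sets} at heights $y\le N$; hence all relevant pieces of orbit lie in some ball $B_R\subset\mathbb R^3$ whose radius depends only on $T_b$. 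Next I would promote the planar weak robustness of Section~\ref{S:TCgrad} to a \emph{local, finite-time} robustness of $u$ on $B_R$: because $\{z=0\}$ is $u$-invariant and $u|_{\{z=0\}}=\nabla_{\mathbb R^2}F$, a straightforward Gronwall estimate on $B_R$ for time $[0,N]$ (rather than the all-times, decaying-error estimate of Proposition~\ref{thm:PerturbedFlow}) shows that any vector field $w$ with $\|u-w\|_{C^1(B_R)}<\delta_0$ and any initial point within distance $\delta_0$ of $p^i_0$ yields a trajectory that intersects $U^i_{t^*}\times(-\varepsilon/2,\varepsilon/2)$ inside $B_R$ if and only if $T_b$ halts with an output coinciding with $t^*$. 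The tolerance $\delta_0>0$ depends only on $T_b$.

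Then I would transplant the simulation to $\mathbb T^3$ via inverse localization. After rescaling $u$ so that $\curl u=u$, Theorem~\ref{th-il} applied with this ball $B_R$, the tolerance $\delta_0$ from the previous step, and $m=1$, produces an odd integer $L>L_0$ and a Beltrami field $v_L$ on $\mathbb T^3$ with $\curl v_L=Lv_L$ such that $\tilde v_L(x):=v_L\circ\Psi^{-1}(x/L)$ is $\delta_0$-close to $u$ in $C^1(B_R)$. Setting $K:=\Psi(B_R/L)\subset\mathbb T^3$ and transporting the points $p^i_0$ and the open sets $U^i_{t^*}\times(-\varepsilon/2,\varepsilon/2)$ through the map $x\mapsto\Psi(x/L)$, the local robustness of the previous step applied to $\tilde v_L$ gives precisely the orbit--open-set equivalence required in the definition of simulation on $K$. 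Robustness under uniform perturbations of $v_L$ and displacements of the initial data follows automatically, since every such perturbation pulls back to a $C^0$-small perturbation of $\tilde v_L$ on $B_R$ that remains within the tolerance $\delta_0$. The main technical obstacle is the propagation of the planar robustness of Section~\ref{S:TCgrad} to a three-dimensional $C^1$-small perturbation of $u$; however, since we only need control at a fixed, finite time on a compact set, this reduces to a standard continuous-dependence estimate rather than the delicate decaying-error analysis required on $\mathbb R^2$.
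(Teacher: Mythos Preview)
Your proposal is correct and follows essentially the same route as the paper: restrict the Turing complete Euclidean Beltrami field $u$ to a ball large enough to capture all bounded-tape computations, use a Gronwall/continuous-dependence argument to obtain uniform robustness of the simulation on that ball, and then invoke the inverse localization Theorem~\ref{th-il} to transplant everything to a compact set $K\subset\mathbb T^3$. The only minor slips are notational (the chart goes $\Psi:\mathbb B\subset\mathbb T^3\to B\subset\mathbb R^3$, so you want $K=\Psi^{-1}(B_R/L)$ and the transport map $x\mapsto\Psi^{-1}(x/L)$), and your use of the $C^1$-norm rather than $C^0$ is harmless since Theorem~\ref{th-il} allows any $m$.
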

\begin{Remark}
The meaning of robust simulation is that any other vector field on $\mathbb T^3$ that is close enough to $v$ in the $C^0$-norm also simulates the tape-bounded Turing machine $T_b$. The robustness also holds with respect to initial conditions by the same argument, using the continuous dependence of the flow on initial conditions. In particular, as explained in Remark~\ref{rem:settoset}, the simulations can be understood as a ``set-to-set" property~\cite{DKB}. Since only a finite number of configurations of the universal Turing machine are taken into account, the size of the open sets described in~\eqref{eq:encod} encoding the configurations is uniformly bounded from below.
\end{Remark}
\begin{proof}
Let us consider the Turing complete Beltrami field $u$ in $\mathbb R^3$ obtained in Theorem~\ref{thm:main1}; by construction, it simulates a universal Turing machine $T$. If the tape size of $T_b$ is $s_b\equiv s(T_b)$, it is clear that there exists an integer $s\geq s_b$ such that $T$ simulates $T_b$ with tapes of size at most $s$. We can then take an integer $N\equiv N(s)$ such that the vector field $u$ restricted to the ball $B_N$ simulates $T_b$. More precisely, this means that if we consider the integral curves of $u$ starting at the points associated with the inputs of $T_b$ (via the correspondence established by $T$), one and only one of the following cases occurs:
\begin{enumerate}
\item The trajectory intersects a set $U_{(q,t)}$ (cf. Equations~\eqref{eq:encod} and~\eqref{eq:sets}) for some configuration $(q,t)$ with a tape whose tape size is greater than $s$. If this happens and no set of the form $U_{(q_{halt},t')}$ was reached before, the machine $T_b$ does not halt with that input (it runs out of memory).
\item The trajectory intersects a set $U_{(q_{halt},t)}$, and we are not in case~(1). Then $T_b$ halts with that input.
\item The trajectory leaves the ball $B_N$ and cases~(1) and~(2) do not occur. Then the machine $T_b$ does not halt with that input (it falls into a loop of non-halting configurations).
\end{enumerate}
We claim that the simulation of $T_b$ by $u$ is uniformly robust in the sense that there is $\epsilon\equiv \epsilon(s)>0$ such that any smooth vector field $\widetilde u$ that is close to $u$ as
\[
\|u-\tilde u\|_{C^0(B_N)}<\epsilon
\]
also simulates $T_b$. Indeed, for any $p\in B_N$, the continuous dependence of solutions to an ODE with respect to parameters implies that the distance between the integral curves $\gamma_p$ and $\widetilde \gamma_p$ of $u$ and $\tilde u$, respectively, is bounded as
\[
\text{dist}(\gamma_p\cap B_N,\widetilde\gamma_p\cap B_N)<C_N\epsilon\,,
\]
for some $\epsilon$-independent positive constant $C_N$. Then, taking $\epsilon$ small enough, it is clear from the construction of $u$ and the univocal association between configurations $(q,t)$ and disjoint open sets $U_{(q,t)}$, that the trajectories of $\widetilde u|_{B_N}$ starting at the points associated with the initial configurations of $T_b$ will intersect the halting open sets if and only if $T_b$ halts with corresponding output.

Finally, if we take a small constant $\delta<\frac{\epsilon}{2}$, the inverse localization Theorem~\ref{th-il} implies that there exists a Beltrami field $v$ on $\mathbb T^3$ of large enough eigenvalue $L>L_0(s)$ whose localization is close to $u$, cf. Equation~\eqref{nose2}. We conclude from the uniform robustness of $u$ that $v$ simulates the tape-bounded Turing machine $T_b$ in the compact set
$$K:=\Psi^{-1}\Big(\frac{B_N}{L}\Big)\,.$$
\end{proof}
{As in Remark \ref{rem:comput2}, the trajectories simulating the bounded Turing machine can be computed up to some negligible error which does not destroy the computational power of the simulation.}
\begin{Remark}
The diameter of the set $K\subset\mathbb T^3$ where $v$ simulates the tape-bounded Turing machine $T_b$ is of order
\[
\frac{N}{L}\ll 1\,.
\]
 When the tape size $s_b\to\infty$, also $N\to \infty$ and $v$ simulates greater in size tape-bounded Turing machines. However, we cannot take the limit to obtain a Turing complete Beltrami field on $\mathbb T^3$ because the eigenvalue satisfies $L\to \infty$, and hence the $H^1$-norm of $v$ is not uniformly bounded:
\[
\|v\|_{H^1(\mathbb T^3)}\to \infty\,.
\]
Quantitative estimates will be provided in the following subsection.
\end{Remark}

To conclude this subsection we want to emphasize that, although Theorem~\ref{coro:bounded} does not yield the (robust) presence of undecidable trajectories, it proves that the reachability problem can be of (robust) arbitrarily high computational complexity. {Concretely, for any given number $n$ of required computations, there exist a Beltrami field on $\mathbb T^3$ with a trajectory for which solving the reachability problem requires at least $n$ computations.}

\begin{Remark}\label{rem:NS}
As detailed in~\cite[Section 6B]{CMPP2}, given a Beltrami field $v$ on $\mathbb T^3$ with eigenvalue $\lambda>0$, we can take any rescaling $Mv$ (with a constant $M>0$) as the initial datum in the Navier-Stokes equations on $\mathbb T^3$ with viscosity $\nu>0$. The solution is then of the form
$$V(\cdot,t)=Mv(\cdot)e^{-\nu \lambda^2 t}$$
for some pressure function (whose explicit expression we omit).
The fluid particle paths then solve the non-autonomous ODE
$$\frac{dx(t)}{dt}=M e^{-\nu \lambda^2 t}v(x(t))\,,$$
which readily implies that the solution $x(t)$ for $t\in [0,\infty)$ travels the orbit of $v$ for times in the interval $[0,\frac{M}{\nu\lambda^2})$. Now, let $v_b$ be a Beltrami field as in Theorem~\ref{coro:bounded} that simulates a tape-bounded Turing machine $T_b$. Obviously, all the computations of $T_b$ simulated by the orbits of $v_b$ finish in finite time smaller than certain $T_{max}$. Hence, taking $M\equiv M_b$ such that $\frac{M}{\nu\lambda^2}>T_{max}$, the solution $V_b$ to the Navier-Stokes equations with initial datum $Mv_b$ also simulates $T_b$ robustly. The constant $M$ depends on the tape size $s_b$ of $T_b$, and it is clear that $M\to\infty$ as $s_b\to\infty$. Accordingly, the fluid particle paths of these time-dependent solutions to the Navier-Stokes equations on $\mathbb T^3$ also exhibit (robust) arbitrarily high computational complexity, which makes the reachability problem non-computable, from a practical point of view, in general.
\end{Remark}

In view of the classical result in the theory of computation (see e.g.~\cite{BGH}) that there cannot exist robust Turing complete systems on compact spaces, and only bounded computation is physically feasible, our result is optimal in the sense that we show that any tape-bounded computation can be robustly simulated by a Beltrami flow on $\mathbb T^3$.

\begin{Remark}
Another appealing way to interpret Theorem~\ref{coro:bounded} is using the language of cellular automata. As proved by Von Neumann in the 1940s, every Turing machine has a cellular automaton that simulates it. Turing machines with bounded tape are equivalent in computational power to finite-state automata, which can be simulated by a Beltrami field on $\mathbb T^3$ in view of our theorem.
\end{Remark}

\subsection{Quantitative estimates and robust simulation in physical systems}

In this section, we estimate the robustness $\epsilon\equiv\epsilon(s_b)$ and the $H^1$-norm of the Beltrami field $v$ on $\mathbb T^3$ that simulates a tape-bounded Turing machine of tape size $s_b$, which we construct in Theorem~\ref{coro:bounded}. In our view, this limitation actually provides additional support for the widely accepted conjecture in the theory of computation that physical memory, as a measure of robustness, limits the computational power of the system. The qualitative version of this conjecture has been proved, under certain assumptions, in~\cite{BGH}. A quantitative formulation has been recently introduced in~\cite{BSR}:
\\

\noindent\emph{Space-bounded Church-Turing thesis~\cite{BSR}:} A physical dynamical system $\mathcal{S}$ with memory $M$ is only capable of performing computations of tape-bounded Turing machines whose tape size is of order $M^{O(1)}$. The physical memory $M$ is a measure of the computational robustness of the system (i.e., the size $\epsilon$ of the perturbation that is allowed for the system to retain its computational power). There is no general formula that relates $M$ and $\epsilon$, but heuristic considerations and some rigorous results strongly suggest the following estimate~\cite{BRS}:
\[
M\sim \log \Big(\frac{1}{\epsilon}\Big)\,.
\]
\\

Theorem~\ref{coro:bounded} can be understood as further evidence towards this thesis: tape-bounded Turing machines can be robustly simulated by steady Euler flows (Beltrami fields) on $\mathbb T^3$, but our construction breaks down when the tape size tends to infinity. The following result bounds the robustness and the energy of the system in terms of the tape size of the Turing machine that it simulates:

\begin{prop}\label{L:robustestimate}
Let $v$ be the Beltrami field on $\mathbb T^3$ that simulates the tape-bounded Turing machine $T_b$ constructed in Theorem~\ref{coro:bounded}. If the tape size of $T_b$ is $s_b$, then the computational robustness of $v$ is bounded as
\begin{equation*}
0<\epsilon\leq c\exp(-\exp(\exp(c s_b)))\,,
\end{equation*}
and its $H^1$-norm is at least
\[
\|v\|_{H^1(\mathbb T^3)}\geq c\exp(\exp(\exp(cs_b)))
\]
for some positive constant $c$.
\end{prop}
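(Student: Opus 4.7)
The plan is to track, as functions of the tape size $s_b$, the three successive sources of exponential growth appearing in the construction of $v$ from Theorem~\ref{coro:bounded}: the radius $N$ of the ball on which $u$ must simulate $T_b$, the required pointwise robustness over that ball, and the eigenvalue $L$ of the Beltrami field produced by inverse localization.

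First I would bound $N$. A tape-bounded machine with tape size $s_b$ has at most $|Q|\cdot|\Sigma|^{s_b}\cdot s_b\leq \exp(Cs_b)$ distinct configurations, so it either halts or enters a loop within $\exp(Cs_b)$ steps, and the number of admissible input tapes is at most $|\Sigma|^{s_b}$, so their indices $i$ in the ordering of $C_0$ are bounded by $\exp(Cs_b)$. Since each point $p^i_l$ used in the planar simulation lies in $[2i,2i+1]\times\{l\}$, all points relevant for $T_b$ sit in $B_N$ with $N\leq C\exp(Cs_b)$. Applying now the double-exponential decay of the error function from Remark~\ref{R:error2}, namely $|\epsilon(x,y)|<C\exp(-e^{Cr})$, to the worst case $r\leq N$ yields the first claim,
\[
\epsilon \leq C\exp(-e^{CN})\leq C\exp(-\exp(\exp(Cs_b))).
\]

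For the $H^1$-bound I would invoke a quantitative form of the inverse localization Theorem~\ref{th-il}: in order for the approximation tolerance $\delta$ to be at most $\epsilon/2$ so that the simulation of $T_b$ by $v$ survives, the eigenvalue $L$ of $v_L$ must satisfy a lower bound polynomial in $1/\delta$, giving $L\geq C\exp(\exp(\exp(Cs_b)))$. Since $v$ is Fourier-supported on the sphere $\{|k|^2=L^2\}\subset\mathbb Z^3$, one has $\|\nabla v\|_{L^2}=L\|v\|_{L^2}$; a Cauchy--Schwarz estimate over the at most $O(L)$ lattice points of that sphere gives $\|v\|_{L^\infty}\leq CL^{1/2}\|v\|_{L^2}$, and combined with $\|v\|_{L^\infty(\mathbb T^3)}\geq \tfrac12\|u\|_{L^\infty(B_N)}\geq c>0$ from inverse localization, this yields $\|v\|_{L^2}\geq cL^{-1/2}$. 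Multiplying,
\[
\|v\|_{H^1(\mathbb T^3)}\geq L\|v\|_{L^2}\geq cL^{1/2}\geq C\exp(\exp(\exp(Cs_b))),
\]
after absorbing constants.

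The main obstacle is extracting the quantitative rate $L_0\gtrsim 1/\delta$ from the proof of Theorem~\ref{th-il} in~\cite{EPT}, which is stated only qualitatively. Once that rate is available the three exponentials propagate cleanly from (i) the count of configurations of $T_b$, giving $N\sim\exp(Cs_b)$, (ii) the double-exponential decay in Remark~\ref{R:error2} which turns the single-exponential radius into the double-exponentially small robustness, and (iii) the inversion of the inverse localization rate which turns the double-exponential robustness into the triple-exponential lower bound on $L$, hence on $\|v\|_{H^1(\mathbb T^3)}$.
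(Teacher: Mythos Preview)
Your argument is correct and follows essentially the same route as the paper's proof: bound $N$ by counting configurations and inputs of $T_b$ (yielding $N\sim e^{Cs_b}$), apply the double-exponential decay of the error function in Remark~\ref{R:error2}, and then use the quantitative rate $L\gtrsim 1/\delta$ from the inverse localization in~\cite{EPT}. The only noteworthy difference is in the $H^1$ step: the paper simply normalizes $\|v\|_{L^2(\mathbb T^3)}=1$ and reads off $\|v\|_{H^1}\sim (1+L)^{1/2}$, whereas you avoid the normalization by bounding $\|v\|_{L^2}$ from below via $\|v\|_{L^\infty}\geq c$ and a Fourier lattice-point count; your route is slightly more honest about why one may normalize, at the cost of the extra (and inessential) input that $r_3(L^2)=O(L^{1+o(1)})$.
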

\begin{Remark}
The space-bounded Church-Turing thesis is satisfied because the tape size $s_b$ in terms of the memory $M\sim \log(\epsilon^{-1})$ of the system is estimated as
\[
s_b\sim \log\log(M)\,.
\]
In view of this bound, we conclude that the Beltrami field $v$ that simulates $T_b$ is far from being efficient from the computational viewpoint.
\end{Remark}
\begin{proof}
We use the same notation as in the proof of Theorem~\ref{coro:bounded}. To simulate the tape-bounded Turing machine $T_b$, we need to take tapes of size $s\geq s_b$ in the computations of the universal Turing machine $T$ simulated by the Beltrami field $u$ in $\mathbb R^3$. Then $T$ computes $ce^{cs}$ steps for $ce^{cs}$ possible inputs of the machine (the constant $c>0$ only depends on the description of $T$, but not on $s$). Taking into account the way the Turing machine $T$ is encoded in the trajectories of $u$, cf. Section~\ref{S:TCgrad}, if we want to simulate the aforementioned number of steps and inputs, $l$ and $i$ in the notation of Section~\ref{SS.curves}, we need to take a ball of radius
\[
N\sim l+i \sim ce^{cs}\,.
\]
Remark~\ref{R:error2} then implies that the robustness $\epsilon$ of the computational ability of $u|_{B_N}$ to simulate $T_b$ is then at most
\[
\epsilon\leq c\exp(-\exp(\exp(c s)))\leq c\exp(-\exp(\exp(c s_b)))\,,
\]
as claimed.

To estimate the $H^1$-norm of $v$ it is enough to notice that the dependence between the eigenvalue $L$ and the error $\delta$ when doing the inverse localization is (see~\cite{EPT})
\[
L\geq \frac{c}{\delta}\,,
\]
so taking into account that we consider $\delta<\epsilon/2$, the previous estimates finally yield
\[
\|v\|_{H^1(\mathbb T^3)}\sim (1+L)^{1/2} \geq c\exp(\exp(\exp(c s_b)))\,,
\]
where we have normalized the $L^2$-norm of $v$, $\|v\|_{L^2(\mathbb T^3)}=1$. This completes the proof of the proposition.
\end{proof}

From the computational viewpoint, the proof of Proposition~\ref{L:robustestimate} shows that the complexity of the simulation is bounded by the eigenvalue of the Beltrami field, and hence by its $H^1$-norm $E:=\|v\|_{H^1(\mathbb T^3)}$. In other words, there is an increasing function $f(E)\sim C\log\log\log(E)$ of the energy $E$ of the field, which measures the size of the tape-bounded machine that is robustly simulated in terms of the energy of the field. This defines a (space) complexity class \textsc{SPACE}$(f(E))$ for the complexity of the reachability problem for steady Euler flows on $\mathbb T^3$.

\subsection{The computational complexity of Beltrami fields is typical}

A natural question is to what extent a random Beltrami field in Euclidean space is complex from the point of view of computability, either in terms of Turing completeness or of the computational complexity of its trajectories. Surprisingly enough this problem can be addressed using the previous considerations on tape-bounded Turing machines and the theory of Gaussian random Beltrami fields introduced in~\cite{EPR}. Inspired by the celebrated theory of Gaussian random monochromatic waves of Nazarov and Sodin, the authors of~\cite{EPR} constructed a Gaussian probability measure $\mu_B$ on the space of $C^k$ vector field on $\mathbb R^3$, where $k$ is any fixed nonnegative integer, whose support is the space of Beltrami fields. This measure has the following properties:

\begin{enumerate}
\item Let $u$ be a Beltrami field. For any compact set $K\subset\mathbb R^3$ and each $\epsilon>0$,
\[\mu_B(\{v\in C^k(\mathbb R^3,\mathbb R^3): \|u-v\|_{C^k(K)}<\epsilon\})>0\,.\]
\item The probability measure $\mu_B$ is translationally invariant and ergodic with respect to translations. The translation operator $\tau_y$, $y\in\mathbb R^3$, on $C^k$ vector fields is defined as $\tau_yw(x):=w(x+y)$.
\end{enumerate}

The main result of this subsection is that, using the Gaussian measure $\mu_B$, we can prove that, almost surely, a Gaussian random Beltrami field is Turing complete in the following sense:

\begin{theorem}\label{T:random}
With probability~$1$ a Gaussian random Beltrami field on $\mathbb R^3$ exhibits trajectories simulating {every possible} tape-bounded Turing machine. In other words, the field has trajectories of arbitrarily high computational complexity.
\end{theorem}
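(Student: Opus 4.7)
The plan is to combine the robust simulation property established in the proof of Theorem~\ref{coro:bounded} with the translational ergodicity of the Gaussian measure $\mu_B$, via a standard ``copies everywhere'' argument for ergodic random fields.

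Fix a tape-bounded Turing machine $T_b$. The proof of Theorem~\ref{coro:bounded} shows that the Euclidean Beltrami field $u$ from Theorem~\ref{th.main}, restricted to a sufficiently large ball $B_N\subset\mathbb R^3$, robustly simulates $T_b$: there exists $\epsilon=\epsilon(T_b)>0$ such that any $C^k$ field $\tilde u$ with $\|\tilde u-u\|_{C^k(B_N)}<\epsilon$ simulates $T_b$ on $B_N$. Define
\[
A_{T_b}:=\bigl\{v\in C^k(\mathbb R^3,\mathbb R^3):\|v-u\|_{C^k(B_N)}<\epsilon\bigr\},\qquad E_{T_b}:=\bigl\{v:\tau_y v\in A_{T_b}\text{ for some }y\in\mathbb R^3\bigr\}.
\]
The identity $\tau_y(\tau_z v)=\tau_{y+z}v$ shows that $E_{T_b}$ is translation-invariant. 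Continuity of the translation action on $C^k$ functions (using that derivatives of a $C^k$ field are uniformly continuous on compact sets) implies that $\{y:\tau_y v\in A_{T_b}\}$ is open in $\mathbb R^3$, so $E_{T_b}$ equals the measurable set $\bigcup_{y\in\mathbb Q^3}\tau_{-y}(A_{T_b})$. The full-support property of $\mu_B$ yields $\mu_B(A_{T_b})>0$, hence $\mu_B(E_{T_b})>0$, and by translational ergodicity of $\mu_B$ we conclude $\mu_B(E_{T_b})=1$.

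Because there are only countably many tape-bounded Turing machines (each is specified by a finite object), the intersection $E:=\bigcap_{T_b}E_{T_b}$ still satisfies $\mu_B(E)=1$. For any realization $v\in E$ and any tape-bounded $T_b$, there is a translation vector $y_b\in\mathbb R^3$ with $\|\tau_{y_b}v-u\|_{C^k(B_N)}<\epsilon(T_b)$; equivalently, $v$ restricted to the ball $B_N+y_b$ is a small perturbation of $u$ restricted to $B_N$, so by the robustness of the simulation the explicitly constructible initial points of Definition~\ref{TC} translated by $y_b$ produce trajectories of $v$ that simulate $T_b$. Combining this for all $T_b$ gives Theorem~\ref{T:random}, and the arbitrarily high computational complexity of the trajectories follows at once, since there exist tape-bounded Turing machines of arbitrarily large computational cost.

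The only non-routine point is the measurability and translation-invariance of $E_{T_b}$; once the uncountable union is reduced to a countable one via the $C^k$-continuity of the translation action, the remainder of the argument is a textbook application of the zero-one law furnished by ergodicity, so the main conceptual work is entirely contained in the robust Euclidean construction of Theorem~\ref{coro:bounded}.
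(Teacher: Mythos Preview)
Your proof is correct and follows the same strategy as the paper: use the robust simulation on a ball $B_N$ from the proof of Theorem~\ref{coro:bounded}, invoke the full-support property of $\mu_B$ to get positive probability, upgrade to probability~$1$ via translational ergodicity, and conclude by a countable intersection. The only technical difference is in how ergodicity is exploited: you apply the zero--one law directly to the translation-invariant event $E_{T_b}$ (after reducing to a countable union), whereas the paper introduces a lower-semicontinuous indicator functional $\Phi_j$ and applies the pointwise ergodic theorem to deduce that its spatial average is almost surely positive. Both routes are standard and interchangeable here; your version is slightly more direct, while the paper's yields the marginally stronger information that the set of translations realizing the simulation has positive density.
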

\begin{proof}
The first observation is that any tape-bounded Turing machine can be simulated by a countable set of tape-bounded Turing machines $T_{b_j}$ of tape size $s_j\to \infty$. Indeed, we can consider the universal Turing machine $T$ with an allowed tape of size $j$ in each direction, i.e., the allowed positions are $-j,...,j$. Let $u$ be the Turing complete Beltrami field constructed in Theorem~\ref{th.main}. We take a sequence of increasing positive numbers $\{N_j\}_{j=1}^\infty$, $N_j\to\infty$ as $j\to\infty$, and for each $N_j$ the corresponding ball $B_j\equiv B_{N_j}$ of radius $N_j$ centered at the origin. As explained in Section~\ref{SS:torus}, these constants can be taken so that the field $u|_{B_j}$ simulates the tape-bounded Turing machine $T_{b_j}$, and this computational ability is robust in the sense that there is a small enough constant $\epsilon_j$ such that any vector field $w$ that is close to $u$ as
\[
\|u-w\|_{C^0(B_j)}<\epsilon_j
\]
also simulates the same tape-bounded Turing machine $T_{b_j}$. Obviously, $s_j\to\infty$ and $\epsilon_j\to 0$ as $j\to\infty$ (because $u$ is a Turing complete vector field that is not robust under small uniform perturbations).

Now we can apply the theory of Gaussian random Beltrami fields. Property~(1) implies that for each $j$
\begin{equation}\label{muB}
\mu_B(\{v\in C^0(\mathbb R^3,\mathbb R^3): \|u-w\|_{C^0(B_j)}<\epsilon_j\})>0\,,
\end{equation}
i.e., the set of Beltrami fields that simulate the tape-bounded Turing machine $T_{b_j}$ has positive probability. Let us define the functional $\Phi_j$ on the space of vector fields $C^0(\mathbb R^3,\mathbb R^3)$ as $\Phi_j(w)=1$ if there is a compact set $K$ such that $w|_K$ simulates the tape-bounded Turing machine $T_{b_j}$, and $\Phi_j(w)=0$ otherwise. By construction, $\Phi_j$ is in $L^1$ with respect to the probability measure $\mu_B$, and the robustness of the computational ability of $w$ to simulate $T_{b_j}$ (which is clear because the simulated number of steps and inputs is finite) implies that $\Phi_j$ is lower semicontinuous, and hence measurable. We can then apply Property~(2) above (see also the ergodic theorem~\cite[Proposition~3.7]{EPR}) to conclude that, almost surely
\[
\lim_{R\to\infty}\Bint_{B_R} \Phi_j\circ\tau_y\, dy = \bE\Phi_j\,.
\]
Then, Equation~\eqref{muB} implies that $\bE\Phi_j>0$, and therefore for each Beltrami field $w$ on a full $\mu_B$-measure set, there is a ball of large enough radius $R$ such that $\Phi_j(w|_{B_R})=1$. This means that, with probability~$1$, a Gaussian random Beltrami field simulates the tape-bounded Turing machine $T_{b_j}$.

Finally, since the countable intersection of sets with probability~$1$ also has probability~$1$, we conclude that, almost surely, a Gaussian random Beltrami field simulates all the tape-bounded Turing machines $\{T_{b_j}\}_{j=1}^\infty$.
\end{proof}
{Notice that, in contrast with Theorem~\ref{thm:torus}, the randomly chosen Beltrami field in Theorem~\ref{T:random} can simulate every bounded-tape Turing machine. This ensures that, almost surely, a randomly chosen Beltrami field exhibits trajectories of arbitrarily high computational complexity.}
Even if every tape-bounded Turing machine is robustly simulated by a Gaussian random Beltrami field with probability one, the robustness of the simulation is not uniform: it depends on the size of the tape. This follows from the fact that $\epsilon_j$ tends to zero as $j$ goes to $\infty$.

\begin{Remark}
In a sense weaker than Definition~\ref{TC}, Theorem~\ref{T:random} establishes that a Gaussian random Beltrami field in $\mathbb R^3$ is Turing complete almost surely. Indeed, each orbit that halts of the universal Turing machine $T$ can be simulated by one of the tape-bounded Turing machines in the countable family $\{T_{b_j}\}_{j=1}^\infty$, and in this sense, we say that the family simulates $T$. The proof of Theorem~\ref{T:random} then shows that with probability~$1$ a random Beltrami field $w$ in $\mathbb R^3$ satisfies that, for each $j$, there is a domain $V_j\subset\mathbb R^3$ such that $w|_{V_j}$ simulates the Turing machine $T_{b_j}$; since this happens for all $j$, $w$ is universal. However, we need to consider a countable number of orbits instead of one (because for each input of the universal Turing machine $T$ we need to check the evolution of $T_{b_j}$ for all $j$), so it cannot be understood as Turing completeness in the usual sense. Another problem, although consistent with the fact that the Beltrami field is randomly chosen, is that the sets $V_j$ are not explicitly constructible as required in Definition~\ref{TC}.
\end{Remark}

\section{Turing complete dynamical systems on $\mathbb S^2$ with zero topological entropy}\label{S:final}

An interesting corollary of our construction in Section~\ref{S:TCgrad} is the existence of a Turing complete $C^\infty$ vector field on the $2$-sphere with zero topological entropy. Hence, we find an example of a flow that is complex from a computational viewpoint but not from a dynamical viewpoint (it is not chaotic).

\begin{prop}\label{P:entropy}
There exists a Turing complete $C^\infty$ vector field on $\mathbb S^2$ with zero topological entropy.
\end{prop}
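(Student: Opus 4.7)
The plan is to transplant the Turing complete gradient field $X=\nabla_{\mathbb R^2}f$ obtained in Theorem~\ref{th.turingrob} to $\mathbb S^2$ via one-point compactification, reparametrizing by a rapidly decaying cutoff so that the resulting vector field extends smoothly at the added point. Concretely, I would pick a strictly positive Schwartz-type function $\chi:\mathbb R^2\to(0,\infty)$ (e.g.\ $\chi(x,y)=e^{-(x^2+y^2)^2}$) and consider the pushforward $Y:=\sigma_*(\chi X)$, where $\sigma:\mathbb R^2\to\mathbb S^2\setminus\{N\}$ is inverse stereographic projection. In the opposite stereographic chart around $N$, the Jacobian of the chart transition decays like $|p|^{-2}$ at infinity; combined with the super-polynomial decay of $\chi X$ and all its partial derivatives (which follows band-by-band from the uniform $C^k$-bounds on $X$ coming from the explicit formula for $X_i$ and the uniform bound on the curvatures $\kappa_i$), this forces $Y$ and all its derivatives to extend by zero to the point $N$. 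The result is a $C^\infty$ vector field on $\mathbb S^2$ whose only new singularity is a (degenerate) fixed point at $N$.

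Turing completeness of $Y$ on $\mathbb S^2$ is then immediate. Since $\chi>0$ pointwise on $\mathbb R^2$, multiplication by $\chi$ is a smooth time reparametrization that preserves the oriented orbits of $X$; composing with the diffeomorphism $\sigma$, the explicitly constructible initial points $p^i_0$ and halting-state open sets $U^i_{t^*}$ from the proof of Theorem~\ref{T:turingpert} are sent to explicitly constructible data on $\mathbb S^2$ for which the orbit-intersection property in Definition~\ref{TC} is preserved verbatim.

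For zero topological entropy I would show that the non-wandering set $\Omega(Y)$ consists only of fixed points of $Y$. Since $X=\nabla f$, the function $f$ strictly increases along every non-constant orbit of $X$, and hence also of $\chi X$: any non-critical point $q\in\mathbb R^2$ admits a small neighborhood $V$ whose forward $\chi X$-orbit exits $V$ in finite time and never returns. Pushing forward via $\sigma$, every point of $\mathbb S^2\setminus\{N\}$ that is not a zero of $Y$ is wandering for $Y$, so $\Omega(Y)\subseteq\{N\}\cup\sigma(\mathrm{Crit}(f))$, a set of fixed points of $Y$. By the variational principle (or equivalently Bowen's theorem) we have $h(\phi^Y_1)=h(\phi^Y_1|_{\Omega(Y)})$, and the time-$1$ map restricted to its own fixed-point set is the identity, which has entropy zero. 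Hence $h(Y)=0$.

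The main obstacle is the smoothness verification in the first step: killing the $|p|^{-2}$-type singularities of the chart transition at $N$ using the decay of $\chi X$ (and of its derivatives) demands explicit control on $\|D^\alpha X\|$ as $|p|\to\infty$. This rests on the uniform $C^k$-bounds along each $B_i$ and on a judicious choice of the smooth extension of $f$ outside $\bigcup_i B_i$; with $\chi$ of Schwartz decay the compatibility is routine, but it is the only step that is not automatic from the setup.
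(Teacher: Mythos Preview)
Your compactification step is essentially identical to the paper's: multiply $X=\nabla_{\mathbb R^2}f$ by a rapidly decaying positive function, push forward by inverse stereographic projection, and extend by zero at~$N$. Turing completeness transfers for the same reason in both arguments (positive reparametrization preserves oriented orbits, and the data $p^i_0$, $U^i_{t^*}$ are carried through by $\sigma$).

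The genuine difference is in the entropy step. The paper dispatches this in one line by invoking Young's theorem~\cite{Y}: \emph{every} continuous autonomous flow on a compact surface has zero topological entropy. Your route instead exploits the gradient structure: $f$ is a strict Lyapunov function, so every non-equilibrium point is wandering, hence $\Omega(Y)$ consists of fixed points, and Bowen's theorem gives $h(\phi^Y_1)=h(\phi^Y_1|_{\Omega(\phi^Y_1)})=0$. This is correct (one should note that the relevant non-wandering set is that of the time-one map, but the Lyapunov argument shows non-fixed points are wandering for $\phi_1$ as well, after possibly shrinking the neighborhood to handle finitely many small iterates). Your argument is more self-contained and makes explicit use of the gradient nature of $X$, whereas the paper's citation of Young is shorter, requires nothing about $X$ beyond continuity, and would apply equally to any planar field one might compactify. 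Both are valid; the paper's is more economical, yours is more intrinsic to the construction.

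Your discussion of the smoothness obstacle is accurate and in fact more careful than the paper, which simply asserts the existence of a suitable $G$; the uniform $C^k$ control on the $X_i$ that you invoke (via the curvature bound in Lemma~\ref{L:curves}) together with a bounded choice of extension outside $\bigcup_i B_i$ indeed makes a Schwartz-class $\chi$ sufficient.
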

\begin{proof}
Let $X$ be the weakly robust Turing complete vector field on $\mathbb{R}^2$ constructed in Theorem~\ref{T:turingpert}. It is obvious that there is a positive function $G\in C^\infty(\mathbb{R}^2)$, all whose derivatives tend to zero at infinity fast enough, so that
\[
\lim_{|x|\to\infty} |D^\alpha\widetilde X(x)|=0
\]
where $\widetilde X:=GX$ and $\alpha$ is any finite multi-index. Denoting by $\varphi: \mathbb{R}^2\longrightarrow \mathbb S^2\setminus\{N\}$ the inverse stereographic projection, the vector field
$$Y:=\varphi_*\widetilde X$$
extends smoothly to all $\mathbb S^2$ by adding a zero on the north pole $N$. Since the trajectories of the vector field $\widetilde X$ coincide with those of $X$, we infer that the vector field $Y$ on $\mathbb S^2$ is Turing complete as well. The points and open sets on $\mathbb S^2$ in the definition of Turing completeness are the images by $\varphi$ of the points and open sets making $X$ Turing complete on $\mathbb{R}^2$ (for details, see the last part of the proof of Theorem 4.1 in~\cite{CMP2}). Concerning the topological entropy of $Y$, we use the fact that any continuous (autonomous) flow on a compact surface has zero topological entropy~\cite{Y}.
\end{proof}
\begin{Remark}
In Corollary~\ref{Cor:TCan} we obtained a Turing complete analytic flow on $\mathbb{R}^2$. Unfortunately, the compactification procedure in the proof of Proposition~\ref{P:entropy} does not allow us to construct a Turing complete analytic flow on $\mathbb S^2$ because the construction does not preserve the analyticity of the field.
\end{Remark}

We will now argue as in~\cite[Theorem 1.3]{CMP2} to deduce that there exists a diffeomorphism of $\mathbb S^2$ that is Turing complete and has zero topological entropy. The definition of Turing completeness for diffeomorphisms is analogous to Definition~\ref{TC}, where the orbit is understood as the positive iterates of the diffeomorphism.

\begin{corollary}\label{cor:diffeo}
Let $\phi_t$ denote the flow of $Y\in \mathfrak{X}^\infty(\mathbb S^2)$. There exists some $\delta_0>0$ such that for every $\delta<\delta_0$, the map $\phi_{\delta}$ is a Turing complete diffeomorphism of $\mathbb S^2$ with zero topological entropy.
\end{corollary}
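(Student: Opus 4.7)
My plan is to handle the two conclusions separately: zero topological entropy follows from a soft, general argument, while Turing completeness requires transferring the orbit structure of the flow $\phi_t$ to the discrete dynamics of $\phi_\delta$ with control on dwell times.

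First, for the entropy claim, I would invoke the standard formula $h_{\mathrm{top}}(\phi_\delta)=|\delta|\,h_{\mathrm{top}}(\phi_1)$ relating the topological entropy of a time-$t$ map to that of the generating flow on a compact manifold. Since Proposition~\ref{P:entropy} already produces $Y$ with $h_{\mathrm{top}}(\phi_1)=0$ via Young's theorem~\cite{Y}, this immediately gives $h_{\mathrm{top}}(\phi_\delta)=0$ for every $\delta>0$, with no smallness assumption needed for this half of the statement.

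The substantive part is to choose $\delta_0$ so that $\phi_\delta$ inherits Turing completeness from $\phi_t$ for all $\delta<\delta_0$. The strategy is to use exactly the same input points and halting open sets as those produced by Proposition~\ref{P:entropy} (namely, the images under the stereographic projection $\varphi$ of the points $p_0^i$ and the sets $U_{t^*}^i$ from Section~\ref{S:TCgrad}), and to show that the discrete orbit $\{\phi_{n\delta}(p)\}_{n\ge 0}$ meets such a set if and only if the continuous orbit does. The ``only if'' direction is automatic since the discrete orbit is contained in the continuous one. For the ``if'' direction I need a \emph{uniform} lower bound on the time that a trajectory spends inside any of the relevant halting sets whenever it enters one. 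This is where the geometry of Section~\ref{S:TCgrad} pays off: by Proposition~\ref{thm:PerturbedFlow} and Lemma~\ref{L:lb}, the integral curve of $X$ through $p_0^i$ crosses each horizontal strip $\{|y-l|<\varepsilon/2\}$ inside the box $I_{\Delta^l(c_i)}^i\times(l-\varepsilon/2,l+\varepsilon/2)$ with $dy/dt>c_0/2$ for a uniform constant $c_0>0$, so it spends at least time $\tau_0:=\varepsilon/c_0$ inside. Rescaling $X$ by the positive function $G$ in the proof of Proposition~\ref{P:entropy}, which we may assume satisfies $0<G\le 1$, can only increase the dwell time; pushing forward by $\varphi$ preserves times. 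Hence the flow of $Y$ spends at least time $\tau_0$ in each halting box it enters, uniformly in $i$, $l$, and $t^*$.

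I would therefore set $\delta_0:=\tau_0$. For any $\delta<\delta_0$ and any tape-matching condition $t^*$, if the continuous orbit through $\varphi(p_0^i)$ visits $\varphi(U_{t^*}^i)$, then it stays inside some connected component of that open set for at least time $\delta_0>\delta$, so at least one iterate $\phi_{n\delta}(\varphi(p_0^i))$ lies in $\varphi(U_{t^*}^i)$. Combined with the trivial converse, this verifies the diffeomorphism analogue of Definition~\ref{TC}, completing the proof. The only delicate point is the uniform dwell-time bound across all inputs; this is supplied directly by the quantitative estimates of Section~\ref{S:TCgrad} and is the reason an explicit $\delta_0>0$ exists at all.
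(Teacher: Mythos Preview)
Your overall plan mirrors the paper's proof: the entropy claim via Abramov's formula, the dwell-time argument for transferring Turing completeness from the flow to the time-$\delta$ map, and the observation that rescaling by $G\le 1$ and conjugating by $\varphi$ can only help. That is exactly how the paper proceeds (via Lemma~\ref{lem:intersections}).

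However, your dwell-time estimate is pointing the wrong way. From Lemma~\ref{L:lb} you extract $dy/dt>c_0/2$, a \emph{lower} bound on the vertical speed, and then conclude that the trajectory ``spends at least time $\tau_0:=\varepsilon/c_0$ inside'' the strip $\{|y-l|<\varepsilon/2\}$. A lower bound on $dy/dt$ yields an \emph{upper} bound on the crossing time, not a lower one; with only $dy/dt>c_0/2$ the trajectory could in principle cross arbitrarily fast, and you would have no control on $\delta_0$. What you need is an \emph{upper} bound on the speed. The paper gets this from the estimate $ds/dt<16\Lambda$ in Equation~\eqref{eq:fastestflow}, noting that $s=y$ in the $\varepsilon$-neighborhood of each horizontal line, so the time to cross from $y=l-\varepsilon/2$ to $y=l+\varepsilon/2$ is at least $\varepsilon/(16\Lambda)$, and one may take $\delta_0=\varepsilon/(32\Lambda)$. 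Once you swap Lemma~\ref{L:lb} for this upper bound, the rest of your argument (including the use of $G\le1$ to pass to $\widetilde X$ and $\varphi$ to pass to $Y$) goes through as written and coincides with the paper's proof.
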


The key to prove this result is the following lemma. As before, $X$ is the Turing complete smooth vector field on $\mathbb R^2$ constructed in Theorem~\ref{T:turingpert}. In the statement, the input points $p^{i}\equiv p^i_0\in\mathbb R^2$, $i\geq 0$, were defined in Section~\ref{SS.curves}.

\begin{lemma}\label{lem:intersections}
Let $\chi_t$ denote the time-$t$ flow of $X$. There is a constant $\delta_0>0$ such that for every $\delta<\delta_0$, the following property holds. For every $p^i$, the orbit of $\chi_{\delta}$ through $p^i$ intersects the $\varepsilon/2$-neighborhood of each line $\{y=k\}$ for every $k\in \mathbb{N}$.
\end{lemma}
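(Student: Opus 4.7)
The plan is to use the fact that the continuous orbit of $X$ starting at $p^i = p^i_0$ is literally the curve $\gamma_i$ traversed at explicit arc-length speed, so that the discrete iterates $\chi_\delta^n(p^i)$ sit on $\gamma_i$ at predictable locations. In the $(s,\rho)$ coordinates on $B_i$ introduced in Section~\ref{SS.grad}, the curve $\gamma_i$ corresponds to $\{\rho = 0\}$, which is forward invariant under $X = X_i = \tfrac{\Lambda}{1 - \kappa_i(s)\rho}\partial_s - \rho\,\partial_\rho$, and there the vector field simplifies to $X_i\big|_{\rho=0} = \Lambda\,\partial_s$ with $s$ an arc-length parameter. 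Consequently $\chi_\delta^n(p^i)$ is the point of $\gamma_i$ with arc-length coordinate $s = n\Lambda\delta$, and in particular $\chi_\delta^n(p^i)$ never leaves $\gamma_i$, so we never need to control the dynamics outside $B_i$ or worry about the global smooth extension of $X$.

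The next step is to translate the target condition into the $s$-variable. By Property~(4) of Lemma~\ref{L:curves}, $\gamma_i$ is vertical in an $\varepsilon$-neighborhood of each point $p^i_k = \gamma_i(s^i_k)$, and on that vertical piece arc-length and $y$-coordinate differ only by the constant $k - s^i_k$. Therefore the set $\{\,s\ge 0 : |y(\gamma_i(s)) - k| < \varepsilon/2\,\}$ contains the open interval $(s^i_k - \varepsilon/2,\, s^i_k + \varepsilon/2)$, which has length $\varepsilon$.

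It remains to choose $\delta_0$ so that the arithmetic progression $\{n\Lambda\delta : n \in \mathbb{N}\}$ meets every such interval. Setting $\delta_0 := \varepsilon/\Lambda$, any $\delta < \delta_0$ gives consecutive iterates at arc-length distance $\Lambda\delta < \varepsilon$, so on the half-line $[0,\infty)$ every interval of length $\varepsilon$ contains at least one multiple $n\Lambda\delta$ (the case $k=0$ being trivial since $p^i$ itself lies on $\{y=0\}$, and for $k\ge 1$ we use $s^i_k \ge k \ge 1$ because the $y$-component of the unit tangent to $\gamma_i$ is bounded by $1$). This yields, for each $k \in \mathbb{N}$, an integer $n\ge 0$ with $|n\Lambda\delta - s^i_k| < \varepsilon/2$, hence $\chi_\delta^n(p^i)$ lies in the $\varepsilon/2$-neighborhood of $\{y=k\}$, as required. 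The constant $\delta_0 = \varepsilon/\Lambda$ depends neither on $i$ nor on $k$.

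There is no real obstacle here: the argument is essentially a one-dimensional Poincaré-recurrence-style observation for a linear translation on the arc-length parameter. The only point worth double-checking is that the bound $\delta_0$ is $i$-independent, which follows because both $\Lambda$ and the size $\varepsilon$ of the vertical windows in Lemma~\ref{L:curves} were chosen uniformly in $i$.
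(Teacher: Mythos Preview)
Your proof is correct and follows essentially the same idea as the paper: bound the arc-length displacement per time step so that the discrete orbit cannot jump over the vertical $\varepsilon/2$-window around each $p^i_k$. The only difference is that you exploit the exact identity $X_i|_{\rho=0}=\Lambda\,\partial_s$ on $\gamma_i$ to get $s(t)=\Lambda t$, whereas the paper uses the cruder bound $\tfrac{ds}{dt}<16\Lambda$ valid on all of $B_i$; this yields the sharper constant $\delta_0=\varepsilon/\Lambda$ in place of the paper's $\varepsilon/(32\Lambda)$, but the logic is identical.
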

\begin{Remark}\label{rem:equiv}
We choose $\varepsilon/2$ instead of $\varepsilon$ because of the definition of the open sets $U_{t^*}^i$ in Equation~\eqref{eq:TCsets}. Denote by $U^i_{(q,t)}$ the open set  $\bigcup_{l=0}^\infty I_{(q,t)}^i \times (l-\varepsilon/2,l+\varepsilon/2)$. By the properties of $X$ (e.g. using that $X$ is tangent to the curves $\gamma_i$ and Lemma~\ref{L:curves}), the statement of the lemma is equivalent to the following property: for every input $c_i$ of the Turing machine with associated point $p^i$, the orbit of $\chi_{\delta}$ through $p^i$ satisfies that for any configuration $(q,t)$, there is some $r$ such that $\chi_{\delta}^r(p^i) \cap U^i_{(q,t)} \neq \emptyset $ if and only if there is some $k$ such that $\Delta^k(c_i)=(q,t)$.
\end{Remark}
\begin{proof}
By construction of the vector field $X$ (see Equation~\eqref{eq:fastestflow}), and the estimates of the constant $K_i$ and the function $\Lambda_i(s)$, there is some constant $c_0>0$ (that does not depend on the band $B_i$), such that the trajectory $(s(t),\rho(t))$ of $X$ through $p^i$ satisfies, in coordinates $(s,\rho)$ of the band $B_i$:
$$\frac{ds}{dt}<c_0\,.$$
Furthermore, by construction, the coordinate $y$ coincides with $s$ in the $\varepsilon$-neighborhood of each horizontal line $\{y=k\}$. This shows that the time that the solution takes to go from $y={k-\varepsilon/2}$ to $y=k+\varepsilon/2$ is at least $\tau=\frac{\varepsilon}{2c_0}$. Let us set
$$ \delta_0:= \frac{\varepsilon}{2c_0}\,.$$
It is then clear by the properties of the solution $(s(t),\rho(t))$, that for every $\delta<\delta_0$ and for all $k\geq 0$, there exists some $r\equiv r(\delta,k)$ such that $\chi_{\delta}^r(p^i)\cap (\mathbb R\times (k-\varepsilon/2,k+\varepsilon/2))\neq \emptyset$, thus completing the proof of the lemma.
\end{proof}

\begin{proof}[Proof of Corollary~\ref{cor:diffeo}]
Let $Y=\varphi_*\widetilde X$ be the smooth Turing complete vector field on $\mathbb S^2$ obtained in Proposition~\ref{P:entropy}. Recall that $\widetilde X=GX$ is just a reparametrization of the Turing complete vector field constructed in Section \ref{S:TCgrad}.

We can safely assume that $G\leq 1$ on $\mathbb R^2$, thus implying that $|\widetilde X|\leq |X|$ at any point. Hence if $\psi_t$ denotes the flow of $\widetilde X$, and $\delta_0$ is the constant in Lemma~\ref{lem:intersections}, for every $\delta<\delta_0$, the map $\psi_{\delta}$ also satisfies the conclusion in Lemma~\ref{lem:intersections}. By Remark~\ref{rem:equiv}, the orbit of $\psi_{\delta}$ through a point $p^i$ (associated with an input $c_i$) satisfies that for any configuration $(q,t)$, there is some $r$ such that $\psi_{\delta}^r(p^i) \cap U_{(q,t)}^i \neq \emptyset $ if and only if there is some $k$ such that $\Delta^k(c_i)=(q,t)$. This implies that the machine halts with input $c_i$ and some output $t^*=(t^*_{-k},...,t^*_{k})$ if and only if the orbit of $\psi_{\delta}$ intersects the open set $U^i_{t^*}$ (as defined in Equation~\eqref{eq:TCsets} in Section \ref{S:TCgrad}). By the construction of $Y$, its flow $\phi_t$ satisfies on $\mathbb S^2\setminus\{N\}$
$$\phi_t= \varphi \circ \psi_t \circ \varphi^{-1}. $$
This shows that the universal Turing machine halts with input $c_i$ if and only if the orbit of $\phi_{\delta}$ through $\varphi(p^i)$ intersects the open set $\varphi(U^i_{t^*})$. Accordingly, the $\delta$-time flow of $Y$ is Turing complete. Finally, the fact that the diffeomorphism $\phi_\delta$ has zero topological entropy follows from Abramov's formula. Indeed, if $h_{top}$ denotes the topological entropy of a diffeomorphism, we have
$$h_{top}(\phi_{\delta})=|\delta|h_{top}(\phi_1)=0\,,$$
where we used that the vector field $Y$ has zero topological entropy, which is defined as the topological entropy of the time-one flow.
\end{proof}

Observe that in~\cite{Mo1, CMPP2} Turing complete diffeomorphisms of the disk (that can be made area-preserving) are constructed, but no information on their topological entropy is given. In~\cite{GZ}, the authors construct closed-form analytic time-dependent ODEs in $\mathbb{R}^2$ that can robustly simulate a universal Turing machine. By projecting to the sphere as in~\cite{CMP2} while keeping track of the constructibility of the ODE, an explicit closed-form Turing complete smooth and time-dependent flow on $\mathbb S^2$ is obtained. Our construction has the advantage of being time-independent with zero topological entropy, at the cost of not having an explicit closed-form expression.

\end{document}